\def\newaliasedtheorem#1[#2]#3{
  \newaliascnt{#1@alt}{#2}
  \newtheorem{#1}[#1@alt]{#3}
  \expandafter\newcommand\csname #1@altname\endcsname{#3}
}
\numberwithin{equation}{section}
\newtheoremstyle{slanted}{\topsep}{\topsep}{\slshape}{}{\bfseries}{.}{.5em}{}
\theoremstyle{plain}
\newtheorem{theorem}{Theorem}[section]
\theoremstyle{definition}
\theoremstyle{remark}
\newcommand{\setN}{\mathbb{N}}
\newcommand{\setR}{\mathbb{R}}
\let\phi\varphi
\newcommand{\di}{\mathop{}\!\mathrm{d}}
\DeclareMathOperator{\supp}{supp}
\newcommand{\Ch}{{\sf Ch}}
\DeclareMathOperator{\Lip}{Lip}
\newcommand{\dist}{\mathsf{d}}
\newcommand{\meas}{\mathfrak{m}}
\DeclareMathOperator{\RCD}{RCD}
\DeclareMathOperator{\vol}{\mathrm{vol}}
\newfont{\tmpf}{cmsy10 scaled 2500}
\begin{document}
\title{A characterization of non-collapsed $\RCD(K,N)$ spaces via Einstein tensors}
\author{
Shouhei Honda
\thanks{Tohoku University, \url{shouhei.honda.e4@tohoku.ac.jp}}
  \and
Xingyu Zhu
\thanks{Georgia Institute of Technology, \url{xyzhu@gatech.edu}} } \maketitle

\begin{abstract}
 We investigate the second principal term in the expansion of metrics $c(n)t^{(n+2)/2}g_t$ induced by heat kernel embedding into $L^2$ on a compact $\RCD(K,N)$ space. We prove that the divergence free property of this term holds in the weak, asymptotic sense if and only if the space is non-collapsed up to multiplying a constant to the reference measure. This seems new even for weighted Riemannian manifolds.  {Moreover an example tells us that the result cannot be generalized to the noncompact case. In this sense, our result is sharp.}
\end{abstract}

\tableofcontents
\section{Introduction}

For a closed Riemannian manifold $(M^n ,g)$, the \textit{Einstein tensor} $G^g$ is defined by
\begin{equation}
  G^g:=\mathrm{Ric}^g-\frac{1}{2}\mathrm{Scal}^gg,
\end{equation}
where $\mathrm{Ric}^g$ and $\mathrm{Scal}^g$ denote the Ricci and the scalar curvature, respectively. 
It is well-known that $G^g$ is divergence free:
\begin{equation}\label{eq:divzero}
\nabla^*G^g=0
\end{equation}
which is a direct consequence of the Bianchi identity.
\par The main purpose of the paper is to establish (\ref{eq:divzero}) for so-called \textit{non-collapsed $\RCD(K, N)$ spaces.} More precisely, for a compact $\RCD(K, N)$ space $(X, \dist, \meas)$, (\ref{eq:divzero}) holds in some sense as explained below if and only if $(X, \dist, \meas)$ is non-collapsed up to multiplication of a positive constant to the measure $\meas$. It is worth pointing out that our argument allows us to provide a new proof of (\ref{eq:divzero}) even for a closed Riemannian manifold $(M^n ,g)$ without using the Bianchi identity.

In order to explain how to justify (\ref{eq:divzero}), let us recall B\'erard-Besson-Gallot's work in \cite{BerardBessonGallot}.
They proved that for a closed Riemannian manifold $(M^n, g)$ and fixed $t \in (0, \infty)$, the map $\Phi_t$ from $M^n$ to $L^2(M^n, \mathrm{vol}_g)$ defined by
     \begin{equation}
        x \mapsto (y \mapsto p(x, y, t))
     \end{equation}
is a smooth embedding with the following asymptotic expansion:
  \begin{equation}\label{eq:bbg}
      c(n)t^{(n+2)/2} \Phi_t^*g_{L^2} =  g - \frac{2t}{3}G^g + O(t^2)
  \end{equation}
as $t \to 0^+$, where $p(x, y, t)$ denotes the heat kernel of $(M^n ,g)$ and $c(n)$ is a positive constant depending only on $n$ defined by
  \begin{equation}
     c(n):=(4\pi)^n \left(\int_{\mathbb{R}^n}|\partial_{x_1}(e^{-|x|^2/4})|^2\di x\right)^{-1}=4(8\pi)^{n/2}.
  \end{equation}
Let us denote $g_t=\Phi_t^*g_{L^2}$ and let us remark that 
    \begin{equation}
       g_t=\int_{M^n}\di_xp \otimes \di_xp \di \mathrm{vol}^g(y).
    \end{equation}
By (\ref{eq:bbg}) we see that as $t \to 0^+$

  \begin{equation}\label{eq:smoothratio}
     \frac{c(n)t^{(n+2)/2}g_t-g}{t} \to -\frac{2}{3}G^g.
  \end{equation}
Since the convergence of (\ref{eq:smoothratio}) is uniform on $M^n$ by the proof (see for instance Theorem \ref{thm:bbgweighted}), (\ref{eq:divzero}) can be reformulated by
  \begin{equation}\label{eq:asym}
    \int_{M^n}\left\langle\frac{c(n)t^{(n+2)/2}g_t-g}{t}, \nabla \omega \right\rangle \di \mathrm{vol}^g \to 0 
  \end{equation}
as $t \to 0^+$ for any smooth $1$-form $\omega$ on $M^n$, where $\langle \cdot, \cdot\rangle$ denotes the canonical inner product on $T_x^*M^n\otimes T_x^*M^n$ for each $x \in M^n$. In this paper the sequence on the (LHS) of (\ref{eq:smoothratio}) is called \textit{weakly asymptotically divergence free}, if it satisfies (\ref{eq:asym}) for any smooth $1$-form $\omega$ on $M^n$, see Definition \ref{def:asy}.

Next let us introduce a recent work of Ambrosio-Portegies-Tewodrose and the first author \cite{AHPT}, which partially generalizes Bérard-Besson-Gallot's result (\ref{eq:bbg}) to $\RCD(K, N)$ spaces which give a special class of metric measure space having lower bounds on Ricci curvature in a synthetic sense introduced in \cite{AmbrosioGigliSavare14} by Ambrosio-Gigli-Savar\'e when $N=\infty$, in \cite{Gigli13, Gigli1} with introducing the infinitesimal Hilbertian condition by Gigli, in \cite{ErbarKuwadaSturm} by Erbar-Kuwada-Sturm, in \cite{AmbrosioMondinoSavare} by Ambrosio-Mondino-Savar\'e, when $N<\infty$. 

Roughly speaking a metric measure space is said to be an $\RCD(K, N)$ space if  the $H^{1, 2}$-Sobolev space is a Hilbert space and the following holds;
\begin{itemize}
\item  the Ricci curvature is bounded below by $K$, and the dimension is bounded above by $N$, in a synthetic sense via optimal transportation theory by Lott-Sturm-Villani \cite{LottVillani, Sturm06a, Sturm06b}. 
\end{itemize}
Typical examples include measured Gromov-Hausdorff limit spaces of Riemannian manifolds with uniform lower bounds on Ricci curvature, so-called Ricci limit spaces, and weighted Riemannian manifolds $(M^n, \dist^g, \mathrm{vol}^g_f)$, where $f \in C^{\infty}(M^n)$ and $\mathrm{vol}^g_f=e^{-f}\mathrm{vol}^g$. 

Thanks to recent quick developments on the study of $\RCD(K, N)$ spaces,
 many structure results on such spaces are known. For example, it is proved in \cite{BrueSemola} by Bru\`e-Semola that for any $\RCD(K, N)$ space, where $N<\infty$, there exists a unique integer $n$, so-called the essential dimension, such that for almost every point of the space, the tangent cone at the point is unique and is isometric to the $n$-dimensional Euclidean space.

On the other hand, a restricted class of $\RCD(K, N)$ spaces, so-called ``\textit{non-collapsed}'' $\RCD(K,N)$ spaces, is introduced in \cite{DePhillippisGigli} by DePhilippis-Gigli as a synthetic counterpart of non-collapsed Ricci limit spaces. The definition is that the reference measure coincides with the $N$-dimensional Hausdorff measure. Then it is known that non-collapsed $\RCD(K,N)$ spaces have finer properties
 than that of general $\RCD(K, N)$ spaces.

For a compact $\RCD(K, N)$ space $(X, \dist, \meas)$ whose essential dimension is $n \in [1, N] \cap \mathbb{N}$, it holds that for any $p \in [1, \infty)$, as $t \to 0^+$
  \begin{equation}\label{eq:bbgrcd}
    \frac{c(n)}{\omega_n}t\meas (B_{t^{1/2}}(x))g_t \to g, \quad \mathrm{in}\,L^p,
  \end{equation} 
where $g=g_{(X, \dist, \meas)}$ denotes the canonical Riemannian metric of $(X, \dist, \meas)$, see subsection \ref{sub:embedding} for the definition of $g$. 
Moreover if in addition 
  \begin{equation}\label{eq:lower}
     \inf_{r \in (0,1), x \in X}\frac{\meas (B_r(x))}{r^n}>0
  \end{equation}  
holds, then we have a similar convergence result:
\begin{equation}\label{eq:asyrcd}
 c(n)t^{(n+2)/2}g_t \to \frac{\di \mathcal{H}^n}{\di \meas}g \quad\,\mathrm{in}\,L^p.
\end{equation}
It is worth pointing out that the finiteness of $p$ is sharp, that is, we can not replace $L^p$ by $L^{\infty}$ in general. For example any closed disc in $\mathbb{R}^n$ with the Lebesgue measure $\mathcal{L}^n$ gives such an example, see \cite[Remark 5.11]{AHPT}.
The convergence (\ref{eq:asyrcd}) shows us that the first principal term of the asymptotic behavior of $c(n)t^{(n+2)/2}g_t$ as $t \to 0^+$ is $\frac{\di \mathcal{H}^n}{\di \meas}g$ in the $L^p$-sense.
The main purpose is to discuss the second principal term. That is, the family of tensors indexed by $t$:
  \begin{equation}\label{eq:targetquotient}
     \frac{c(n)t^{(n+2)/2}g_t-\frac{\di \mathcal{H}^n}{\di \meas}g}{t}
  \end{equation}  
called the \textit{approximate Einstein tensor} of $(X, \dist, \meas)$ in this paper.
Let us ask when (\ref{eq:targetquotient}) is weakly asymptotically divergence free, that is, 
\begin{equation}\label{eq:main}
               \lim_{t \to 0^+}\int_X\left\langle \frac{c(n)t^{(n+2)/2}g_t-\frac{\di \mathcal{H}^n}{\di \meas}g}{t}, \nabla \omega\right\rangle \di \meas=0.
            \end{equation}  
holds for a large enough class of $1$-forms $\omega$. See Definition \ref{def:asy} for the precise definition of weakly asymptotically divergence free.

Our main result is stated as follows. Before stating it, recall that $D(\Delta_{H, 1})$ and $D(\delta)$ denote the domain of the Hodge Laplacian $\Delta_{H, 1}=\delta \di + \di \delta$ on $1$-forms defined in \cite{Gigli} and the domain of the adjoint operator $\delta=\di^*$ of the exterior derivative $\di$ on $1$-forms, respectively.

\begin{theorem}[``Weakly asymptotically divergence free'' characterizes the non-collapsed condition]\label{thm:main}
      Let $(X, \dist, \meas)$ be a compact $\RCD(K, N)$ space whose essential dimension is $n \in [1, N] \cap \mathbb{N}$.
      Then the following two conditions are equivalent:
  \begin{enumerate}
     \item  $(X, \dist, \meas)$ satisfies (\ref{eq:lower}) and (\ref{eq:main})
           for any $\omega \in D(\Delta_{H, 1})$ with $\Delta_{H, 1}\omega \in D(\delta)$.
                       
      \item $(X, \dist, \meas)$ is a $\RCD(K, n)$ space with
         \begin{equation}\label{eq:meashauss}
             \meas=\frac{\meas (X)}{\mathcal{H}^n(X)}\mathcal{H}^n.
         \end{equation}      
  \end{enumerate}
\end{theorem}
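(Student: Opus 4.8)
The whole proof reduces to understanding the short-time expansion, to order $t$, of the weak pairing $I_\omega(t):=\int_X\langle g_t,\nabla\omega\rangle\di\meas$, and comparing it with the pairing of the leading term $\rho\,g$, where I abbreviate $\rho:=\frac{\di\mathcal H^n}{\di\meas}$. The plan is first to rewrite $I_\omega(t)$ using the semigroup representation $g_t(x)=[\nabla_x\otimes\nabla_{x'}\,p(x,x',2t)]_{x'=x}$, which follows from Chapman--Kolmogorov $\int_X p(x,y,t)p(x',y,t)\di\meas(y)=p(x,x',2t)$. Integrating against $\nabla\omega$ and integrating by parts in $x$, I would move all derivatives onto the heat kernel and onto $\omega$, so as to express $I_\omega(t)$ through $\delta\omega$, a rough-Laplacian/Bochner term, and $\rho$. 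It is precisely here that the hypotheses $\omega\in D(\Delta_{H,1})$ and $\Delta_{H,1}\omega\in D(\delta)$ are used: they provide enough regularity of $\omega$ to legitimize these manipulations and the passage to the limit in the $\RCD$ setting, via the Weitzenb\"ock identity $\Delta_{H,1}=\nabla^*\nabla+\mathrm{Ric}$.

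The key expected outcome of this computation is that $c(n)t^{(n+2)/2}I_\omega(t)$ admits an expansion whose constant term matches $\int_X\rho\,\langle g,\nabla\omega\rangle\di\meas=-\int_X\rho\,\delta\omega\,\di\meas$ (cancelling the contribution of $-\rho\,g$ in the numerator, consistently with (\ref{eq:asyrcd})), and whose $O(t)$ coefficient splits into two pieces: a \emph{curvature} piece, which is the pairing $\langle G,\nabla\omega\rangle$ of a synthetic Einstein tensor, and a \emph{density} piece built from $\nabla\rho$ (equivalently, from the Bakry--\'Emery correction $\nabla\log\rho$ on weighted manifolds). I would then show, by a direct integration-by-parts argument that never invokes the second Bianchi identity, that the curvature piece always integrates to zero; this is the promised Bianchi-free proof of (\ref{eq:divzero}). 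Consequently, (\ref{eq:main}) holds for every admissible $\omega$ if and only if the density piece vanishes for every admissible $\omega$.

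Given this dichotomy, both implications follow. For $(2)\Rightarrow(1)$: if $\meas=\frac{\meas(X)}{\mathcal H^n(X)}\mathcal H^n$ then $\rho$ is constant, so $\nabla\rho=0$ and the density piece vanishes identically, yielding (\ref{eq:main}); moreover Bishop--Gromov on the $\RCD(K,n)$ space gives the uniform volume lower bound (\ref{eq:lower}). For $(1)\Rightarrow(2)$: assuming (\ref{eq:lower}) (so that (\ref{eq:asyrcd}) is available) together with (\ref{eq:main}), the vanishing of the density piece against a sufficiently rich class of forms $\omega$ forces $\nabla\rho=0$, i.e. $\rho$ is constant and hence $\meas=\frac{\meas(X)}{\mathcal H^n(X)}\mathcal H^n$; combined with the fact that the essential dimension equals $n$, I would then invoke the characterization of (weakly) non-collapsed spaces to upgrade constancy of the density to the $\RCD(K,n)$ conclusion of $(2)$.

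The main obstacle is extracting the $O(t)$ coefficient of $I_\omega(t)$ rigorously on a general compact $\RCD(K,N)$ space, where the pointwise Minakshisundaram--Pleijel expansion of the heat kernel is unavailable. Rather than expand $p(x,x',2t)$ pointwise, I expect to work entirely weakly: using the first-order $L^p$-asymptotics (\ref{eq:asyrcd}), the rectifiable structure with a.e.\ Euclidean tangent cones, and the measure-valued Ricci tensor together with the Bochner inequality to control the relevant second-order quantities and their error terms uniformly in $t$. The delicate point is to isolate cleanly the genuinely curvature-driven contribution and to prove that it carries no information about the reference measure, so that the \emph{only} obstruction to the weak-asymptotic divergence-free property is the gradient of the density $\rho$.
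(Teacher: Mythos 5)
Your overall strategy --- computing the weak pairing $I_\omega(t)=\int_X\langle g_t,\nabla\omega\rangle\di\meas$ and integrating by parts onto the heat kernel --- starts in the right direction, but the central step of your plan has a genuine gap. You want an expansion of $c(n)t^{(n+2)/2}I_\omega(t)$ to order $t$ whose $O(t)$ coefficient splits into a ``curvature piece'' $\int\langle G,\nabla\omega\rangle\di\meas$ for a synthetic Einstein tensor $G$ plus a ``density piece'' built from $\nabla\rho$, and you acknowledge that extracting this coefficient rigorously is the hard part, proposing to control ``the relevant second-order quantities and their error terms uniformly in $t$'' via Bochner, rectifiability and the measure-valued Ricci tensor. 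This program is not carried out, and there is strong evidence it cannot be carried out in the stated generality: the paper's Section \ref{sec:stratified} exhibits a compact non-collapsed $\RCD(K,3)$ space on which the approximate Einstein tensor (\ref{eq:targetquotient}) diverges in $L^2$, so the limiting tensor $G$ whose pairing you want to isolate need not exist as an $L^2$ object, and no second-order expansion of $g_t$ (pointwise or in $L^2$) is available. Your argument for $(1)\Rightarrow(2)$ also stops at ``the vanishing of the density piece forces $\nabla\rho=0$'' without a mechanism for extracting constancy of $\rho$ from the weak hypothesis.

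The paper circumvents all of this with an exact identity valid for every $t>0$ (Theorem \ref{thm:technical}): for $\omega\in D(\Delta_{H,1})$ with $\Delta_{H,1}\omega\in D(\delta)$,
\begin{equation*}
\int_X\langle g_t,\nabla\omega\rangle\di\meas=\frac14\int_X\delta(\Delta_{H,1}\omega)\,p(x,x,2t)\di\meas,
\end{equation*}
obtained from the formula for the Laplacian of $(X,g_t,\meas)$ and the commutation $\Delta_{H,1}\di f=-\di\Delta f$. After subtracting the pairing of $\rho g$ (which contributes $\frac1t\int_X\mathrm{tr}(\nabla\omega)\di\mathcal H^n$) and dividing by $t$, only the \emph{leading-order} on-diagonal asymptotics $t^{n/2}p(x,x,2t)\to(8\pi)^{-n/2}$ on $\mathcal R_n$ plus dominated convergence are needed; the limit is a constant multiple of $\int_X\delta(\Delta_{H,1}\omega)\,\rho\,\di\meas$. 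In direction $(2)\Rightarrow(1)$ this vanishes because $\rho$ is constant and $\delta$ integrates to zero; in direction $(1)\Rightarrow(2)$ the convergence of $\frac1t\int\mathrm{tr}(\nabla\omega)\di\mathcal H^n$ forces $\int\mathrm{tr}(\nabla\omega)\di\mathcal H^n=0$, hence $\int_X\delta(\Delta_{H,1}\omega)\rho\,\di\meas=0$, and testing with $\omega=\di f$ for eigenfunctions $f$ gives $\lambda^2\int_Xf\rho\,\di\meas=0$, so $\rho$ is constant; Theorem \ref{thm:charnoncollapsed} then upgrades this to the $\RCD(K,n)$ conclusion. Note also that the divergence-free property of limit tensors (your ``Bianchi-free'' claim) appears in the paper only as Corollary \ref{cor:divfree}, deduced \emph{from} the main theorem under the extra $L^2$-bound (\ref{eq:bounded}); your proposal reverses this logical order and uses it as an ingredient, which is not available. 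To repair your argument you would need to replace the conjectural second-order expansion with an exact or asymptotic identity for the weak pairing itself, which is precisely what Theorem \ref{thm:technical} provides.
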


Since the space $\{\omega \in D(\Delta_{H, 1}); \Delta_{H, 1}\omega \in D(\delta)\}$ is dense in the space of $L^2$-1-forms, (\ref{eq:main}) can be interpreted as that the approximate Einstein tensor (\ref{eq:targetquotient}) is actually weakly asymptotically divergence free. See also appendix \ref{sec:app} (Corollary \ref{cor:spec}). Let us remark that (\ref{eq:meashauss}) implies that $(X, \dist, \mathcal{H}^n)$ is a non-collapsed $\RCD(K, n)$ space.   {It is worth pointing out that the compactness of $X$ in Theorem \ref{thm:main} cannot be dropped. See Example \ref{rem:example}.}

The following is a direct consequence of Theorem \ref{thm:main} which is also new (recall $\mathrm{vol}_f^g(A)=\int_Ae^{-f}\di \mathrm{vol}^g$):
\begin{corollary}\label{cor:weighted}
Let $(M^n, \dist^g, \mathrm{vol}_f^g)$ be a closed weighted Riemannian manifold. Then there exists a $G_f^g\in C^{\infty}((T^*)^{\otimes 2}M^n)$ called the weighted Einstein tensor such that the following expansion holds,  
\begin{equation}
c(n)t^{(n+2)2}g_t=e^fg -\frac{2t}{3} G_f^g + O(t^2) \quad (t \to 0^+).
\end{equation}
Moreover, $f$ is constant if and only if $G_f^g$ is divergence free with respect to $\mathrm{vol}^g_f$, that is,
\begin{equation}
\int_{M^n}\langle G_f^g, \nabla \omega \rangle \di \mathrm{vol}^g_f=0
\end{equation}
holds for any $\omega \in C^{\infty}(T^*M^n)$.
\end{corollary}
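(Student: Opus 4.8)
The plan is to deduce the corollary from Theorem \ref{thm:main} by first upgrading the $L^p$-expansion (\ref{eq:asyrcd}) to a genuinely smooth (uniform) short-time expansion in the weighted smooth category, and then reading off the divergence-free characterization from that uniform convergence. A closed weighted Riemannian manifold $(M^n,\dist^g,\mathrm{vol}_f^g)$ is a compact $\RCD(K,N)$ space of essential dimension $n$, and being a smooth compact manifold it automatically satisfies the uniform lower bound (\ref{eq:lower}). Since here $\mathcal{H}^n=\mathrm{vol}^g$, we have $\tfrac{\di\mathcal{H}^n}{\di\meas}=e^f$, so (\ref{eq:asyrcd}) already pins down the leading term: $c(n)t^{(n+2)/2}g_t\to e^fg$. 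To obtain the full expansion with the $O(t^2)$ remainder I would run the classical Minakshisundaram--Pleijel parametrix for the weighted heat operator generated by the Witten (Bakry--\'Emery) Laplacian $\Delta_f$, producing a complete short-time expansion of the heat kernel $p(x,y,t)$ and of its spatial derivatives that is uniform on $M^n$. Substituting this into $g_t=\int_{M^n}\di_xp\otimes\di_xp\,\di\mathrm{vol}_f^g(y)$ and collecting powers of $t$ exactly as in B\'erard--Besson--Gallot's computation yields a smooth symmetric $2$-tensor as the coefficient of $t$; I would then \emph{define} $G_f^g$ to be $-\tfrac32$ times that coefficient, so the stated expansion holds with uniform, hence smooth, convergence. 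The coefficient will be a weighted Einstein-type combination of $\mathrm{Ric}^g+\mathrm{Hess}^gf$, the scalar curvature and $f$, but its explicit form is irrelevant to the argument.

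The uniform convergence then does the main work. Since
\begin{equation}
\frac{c(n)t^{(n+2)/2}g_t-e^fg}{t}\longrightarrow -\frac{2}{3}\,G_f^g
\end{equation}
uniformly on $M^n$, and since $\meas=\mathrm{vol}_f^g$ and $\tfrac{\di\mathcal{H}^n}{\di\meas}g=e^fg$, I can pass the limit inside the integral in (\ref{eq:main}) to obtain, for every $1$-form $\omega$ with $\nabla\omega\in L^2$,
\begin{equation}
\lim_{t\to0^+}\int_{M^n}\Big\langle \frac{c(n)t^{(n+2)/2}g_t-\frac{\di\mathcal{H}^n}{\di\meas}g}{t},\,\nabla\omega\Big\rangle\di\meas=-\frac{2}{3}\int_{M^n}\langle G_f^g,\nabla\omega\rangle\di\mathrm{vol}_f^g.
\end{equation}
On a smooth compact manifold the smooth forms are contained in, and dense in, the admissible class $\{\omega\in D(\Delta_{H,1});\ \Delta_{H,1}\omega\in D(\delta)\}$. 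Hence, if (\ref{eq:main}) holds for all admissible $\omega$ it holds in particular for all smooth $\omega$, giving $\int\langle G_f^g,\nabla\omega\rangle\di\mathrm{vol}_f^g=0$; conversely, vanishing against all smooth $\omega$ forces the pointwise identity that $G_f^g$ is divergence free in the weighted sense, which then propagates to every admissible $\omega$ by integration by parts. Thus ``$G_f^g$ divergence free with respect to $\mathrm{vol}_f^g$'' is equivalent to (\ref{eq:main}) holding for every admissible $\omega$.

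Finally I would invoke Theorem \ref{thm:main}. Because (\ref{eq:lower}) is automatic here, its condition (1) reduces to the validity of (\ref{eq:main}) for all admissible $\omega$, which by the previous step is equivalent to $G_f^g$ being divergence free. On the other hand, since $\mathcal{H}^n=\mathrm{vol}^g$ and $\meas=e^{-f}\mathrm{vol}^g$, condition (2), i.e. (\ref{eq:meashauss}), holds precisely when $e^{-f}$ is constant, that is, when $f$ is constant. Chaining these through Theorem \ref{thm:main} gives the desired equivalence: $f$ constant $\iff$ (2) $\iff$ (1) $\iff$ $G_f^g$ divergence free. I expect the only substantive obstacle to be the first step, namely producing the smooth expansion and thereby the existence of $G_f^g$ via the weighted parametrix, since all the remaining equivalences are soft consequences of uniform convergence, density, and the identity $\mathcal{H}^n=\mathrm{vol}^g$.
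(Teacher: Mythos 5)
Your proposal follows essentially the same route as the paper: Section \ref{sec:3} constructs the weighted Minakshisundaram--Pleijel parametrix (Lemma \ref{solveU}, Theorem \ref{thm:WeightedExpansion}) to obtain the uniform expansion of Theorem \ref{thm:bbgweighted}, which produces $G_f^g$ explicitly, and the equivalence with ``$f$ constant'' is then read off from Theorem \ref{thm:main} exactly as you describe, using that (\ref{eq:lower}) is automatic and that uniform convergence lets one identify (\ref{eq:main}) with the vanishing of $\int_{M^n}\langle G_f^g,\nabla\omega\rangle\di\mathrm{vol}^g_f$. The only difference is cosmetic: the paper records the explicit formula for $G_f^g$ and additionally supplies an independent direct proof of the divergence-free characterization (Proposition \ref{prop:directproof}) that bypasses Theorem \ref{thm:main}, whereas you leave the coefficient unidentified, which is indeed sufficient for the stated corollary.
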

We will also provide a direct proof of this corollary with
 the explicit formula for $G_f^g$, see Proposition \ref{prop:directproof}.


It is worth noticing that although the left hand side of (\ref{eq:main}) converges as $t\to 0^+$, the approximate Einstein tensor itself (\ref{eq:targetquotient}) may not $L^2$-converge to a limit tensor in general. This is because lack of $L^2$ bounds, see section \ref{sec:stratified} for the explicit construction of a non-collapsed $\RCD(K,3)$ space with $K>1$ such that the $L^2$ norm of (\ref{eq:targetquotient}) tends to $+\infty$ as $t\to 0^+$. 
On the other hand, under assuming the uniform $L^2$ bound, we can prove that all limit tensors are actually divergence free as follows, which is an easy consequence of Theorem \ref{thm:main}.   
    
	



\begin{corollary}\label{cor:divfree}
  Let $(X, \dist, \mathcal{H}^n)$ be a compact non-collapsed $\RCD(K, n)$ space. If 
    \begin{equation}\label{eq:bounded}
       \sup_{0<t<1}\left\| \frac{c(n)t^{(n+2)/2}g_t-g}{t} \right\|_{L^2}<\infty
    \end{equation}     
holds, then any $G \in L^2((T^*)^{\otimes 2}(X, \dist, \mathcal{H}^n))$ that is a $L^2$-weak limit of some subsequence of   
  \begin{equation}
     \frac{c(n)t^{(n+2)/2}g_{t}-g}{t}
  \end{equation}
  as $t \to 0^+$ satisfies $G \in D(\nabla^*)$ with $\nabla^*G=0$, where $D(\nabla^*)$ denotes the domain of the divergence operator $\nabla^*$.
\end{corollary}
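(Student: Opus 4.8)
The plan is to deduce the corollary from Theorem \ref{thm:main} by a soft weak-convergence argument, so essentially no new computation is required. First I would record the elementary but crucial normalisation: since $(X,\dist,\mathcal{H}^n)$ is non-collapsed we have $\meas=\mathcal{H}^n$, hence $\frac{\di\mathcal{H}^n}{\di\meas}\equiv 1$, so the family in \eqref{eq:bounded} is exactly the approximate Einstein tensor \eqref{eq:targetquotient}. The space satisfies condition (2) of Theorem \ref{thm:main}: it is $\RCD(K,n)$, its essential dimension equals $n$ (a standard property of compact non-collapsed $\RCD(K,n)$ spaces), and \eqref{eq:meashauss} holds with constant $1$. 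Therefore condition (1) holds as well, and in particular
\[
\lim_{t\to 0^+}\int_X\left\langle\frac{c(n)t^{(n+2)/2}g_t-g}{t},\nabla\omega\right\rangle\di\mathcal{H}^n=0
\]
for every $\omega\in D(\Delta_{H,1})$ with $\Delta_{H,1}\omega\in D(\delta)$.

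Now let $G$ be as in the statement, i.e. there is a sequence $t_k\to 0^+$ with $\frac{c(n)t_k^{(n+2)/2}g_{t_k}-g}{t_k}\rightharpoonup G$ weakly in $L^2((T^*)^{\otimes2}(X,\dist,\mathcal{H}^n))$; the uniform bound \eqref{eq:bounded} is precisely what makes such weakly convergent subsequences available. For any admissible test form $\omega$ as above, and in particular for any finite linear combination of eigenforms of $\Delta_{H,1}$ (all of which lie in this class), the tensor $\nabla\omega$ belongs to $L^2((T^*)^{\otimes2})$, so weak convergence allows me to pass to the limit:
\[
\int_X\langle G,\nabla\omega\rangle\di\mathcal{H}^n=\lim_{k\to\infty}\int_X\left\langle\frac{c(n)t_k^{(n+2)/2}g_{t_k}-g}{t_k},\nabla\omega\right\rangle\di\mathcal{H}^n=0,
\]
the last equality being the full limit computed above restricted to the subsequence.

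It remains to upgrade this from the special test class to all of $D(\nabla)$, since $G\in D(\nabla^*)$ with $\nabla^*G=0$ is equivalent to $\int_X\langle G,\nabla\omega\rangle\di\mathcal{H}^n=0$ for every $\omega\in D(\nabla)$. Here I would invoke the spectral approximation isolated in the appendix (Corollary \ref{cor:spec}): finite linear combinations of eigenforms of $\Delta_{H,1}$ are dense in $D(\nabla)$ with respect to the graph norm $\|\omega\|_{L^2}+\|\nabla\omega\|_{L^2}$. Because $G\in L^2$, the linear functional $\omega\mapsto\int_X\langle G,\nabla\omega\rangle\di\mathcal{H}^n$ is continuous in this graph norm; as it vanishes on a dense subspace it vanishes on all of $D(\nabla)$, which gives $G\in D(\nabla^*)$ and $\nabla^*G=0$.

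The only non-formal ingredient is this density statement, which is where the curvature bound enters and which I expect to be the main point to get right. Concretely, the Weitzenb\"ock identity $\Delta_{H,1}=\nabla^*\nabla+\mathrm{Ric}$ together with $\mathrm{Ric}\ge K$ yields, on the relevant core, $\|\nabla\omega\|_{L^2}^2\le\|\di\omega\|_{L^2}^2+\|\delta\omega\|_{L^2}^2-K\|\omega\|_{L^2}^2$, so that convergence in the Hodge form domain $D(\di)\cap D(\delta)$ controls convergence of $\nabla$; consequently the spectral partial sums of any $\omega\in D(\nabla)$ converge to $\omega$ in the graph norm of $\nabla$. Once Corollary \ref{cor:spec} supplies this approximation, the present corollary follows from the two lines of weak-convergence bookkeeping above.
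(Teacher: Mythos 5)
Your proposal is correct and follows essentially the same route as the paper: it applies the implication $(2)\Rightarrow(1)$ of Theorem \ref{thm:main}, passes to the $L^2$-weak limit against $\nabla\omega$ for $\omega$ in the test class $\{\omega\in D(\Delta_{H,1});\ \Delta_{H,1}\omega\in D(\delta)\}$, and upgrades by density of that class in $H^{1,2}_C$ (which is all that the definition of $D(\nabla^*)$ requires, so your reference to ``all of $D(\nabla)$'' should be read as $H^{1,2}_C$ rather than $W^{1,2}_C$). You have merely inlined what the paper packages as Proposition \ref{prop:equivalence} together with the density statement (\ref{eq:dense})/Remark \ref{rem:expansion} from the appendix, so no genuinely different idea is involved.
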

Applying Corollary \ref{cor:divfree} to a closed Riemannian manifold $(M^n, \dist^g, \mathrm{vol}^g)$ gives a new proof of (\ref{eq:divzero}) without using the Bianchi identity.

\smallskip\noindent
\textbf{Acknowledgement.}
Both authors are grateful to Igor Belegradek for valuable suggestions. Moreover they would like to thank the reviewer for his/her careful reading and for giving many valuable suggestions.
The first author acknowledges supports of {the Grant-in-Aid for Scientific Research
(B) of 21H00977}, the Grant-in-Aid for Scientific Research (B) of 20H01799 and 
the Grant-in-Aid for Scientific Research (B) of 18H01118.

\section{Heat kernel embedding}\label{sec:2}
The purpose of this section is to introduce our terminology minimally, assuming a bit of the knowledge of $\RCD$ theory. 
A triple $(X, \dist, \meas)$ is said to be a metric measure space if $(X, \dist)$ is a complete separable metric space and $\meas$ is a Borel measure with full support. For simplicity, we always assume that $X$ is not a single point.

\subsection{Definitions and the essential dimension}\label{subsec2}
Let us fix a metric measure space $(X, \dist, \meas)$.
Define the Cheeger energy $\Ch:L^2(X,\meas)\to [0,\infty]$ by
\begin{equation}\label{eq:defchee}
  \Ch(f):=\inf_{\|f_i-f\|_{L^2(X, \meas)}\to 0}
      \left\{ \liminf_{i\to\infty}\int_X{\rm lip}^2 f_i\di\meas:\ f_i\in\Lip_b(X,\dist)\cap L^2(X,\meas)
     \right\},
\end{equation}
where 
$$
{\rm lip}f(x) :=
\begin{cases}
\limsup\limits_{y\to x}\frac{|f(y)-f(x)|}{\dist(y,x)} &  \text{if $x \in X$ is not isolated},
\\
0 & \text{otherwise}
\end{cases}
$$
denotes the slope of $f$ at $x$. Then, the Sobolev space $H^{1,2}(X,\dist,\meas)$ is defined as the finiteness domain of $\Ch$.
By looking at the optimal sequence in \eqref{eq:defchee} one can identify a canonical object $|\nabla f|$, called the minimal relaxed slope,
which is local on Borel sets (i.e. $|\nabla f_1|=|\nabla f_2|$ $\meas$-a.e. on $\{f_1=f_2\}$) and provides an integral representation to $\Ch$, namely
$$
\Ch (f)=\int_X|\nabla f|^2\di\meas\qquad\forall f\in H^{1,2}(X,\dist,\meas).
$$
We are now in a position to introduce the $\RCD(K, N)$ spaces.
For any $K \in \mathbb{R}$ and any $N \in [1, \infty]$, a metric measure space $(X, \dist, \meas)$ is said to be an $\RCD(K, N)$ space if the following four conditions are satisfied.
\begin{enumerate}
  \item{(Volume growth)} There exist $x \in X$ and $C>1$ such that $\meas (B_r(x))\le Ce^{Cr^2}$ holds for any $r>0$.
  \item{(Inifinitesimally Hilbertian property)} $\Ch$ is a quadratic form. In particular thanks to \cite{AmbrosioGigliSavare14}, see also the first part of \cite{Gigli}, the function 
    $$
    \langle\nabla f_1,\nabla f_2\rangle:=\lim_{\epsilon\to 0}\frac{|\nabla (f_1+\epsilon f_2)|^2-|\nabla f_1|^2}{2\epsilon}
    $$
    provides a symmetric bilinear form on $H^{1,2}(X,\dist,\meas)\times H^{1,2}(X,\dist,\meas)$ with values in $L^1(X,\meas)$, and
    $$
    \mathcal{E} (f_1,f_2) := \int_X \langle \nabla f_1, \nabla f_2 \rangle \di \meas, \qquad \forall f_1, f_2 \in H^{1,2}(X,\dist,\meas) 
    $$
    defines a strongly local Dirichlet form.
  \item{(Sobolev-to-Lipschitz property)} Any $f \in H^{1, 2}(X, \dist, \meas)$ with $|\nabla f| \le 1$ for $\meas$-a.e. has an $1$-Lipschitz representative.
  \item{(Bochner inequality)} For any $f \in D(\Delta)$ with $\Delta f \in H^{1, 2}(X, \dist, \meas)$ we have
    \begin{equation}
      \frac{1}{2}\int_X|\nabla f|^2\Delta \phi \di \meas \ge \int_X\phi \left( \frac{(\Delta f)^2}{N}+\langle \nabla \Delta f, \nabla f\rangle +K|\nabla f|^2\right)\di \meas
    \end{equation}
    for any $\phi \in D(\Delta) \cap L^{\infty}(X, \meas)$ with $0 \le \phi \le 1$, $\Delta \phi \in L^{\infty}(X, \meas)$, where 
     \begin{align*}
       \mathcal{D}(\Delta) := \{ f \in H^{1,2}(X,\dist,\meas) \, : \, \, & \text{there exists} \, \, h \in L^2(X,\meas) \, \, \text{such that} \\
       &  \mathcal{E}(f,g)= - \int_X h g \di \meas \, \, \, \text{for all} \, \,  g\in H^{1,2}(X,\dist,\meas) \, \}
     \end{align*}
 and $\Delta f := h$ for any $f \in \mathcal{D}(\Delta)$.
\end{enumerate}
See \cite[Sec.12]{AmbrosioMondinoSavare} and \cite[Thm.7 and Sec.4]{ErbarKuwadaSturm}.
It is worth pointing out that if $N<\infty$, then for any $\RCD(K, N)$ space $(X, \dist, \meas)$ and any locally Lipschitz function $f$ on $X$ belonging to $H^{1, 2}(X, \dist, \meas)$, we have 
\begin{equation}\label{eq:lipcheeger}
|\nabla f|(x)=\mathrm{lip}f(x),\quad \mathrm{for}\,\, \meas-a.e. \,\,x \in X
\end{equation}
because of \cite[Thm.6.1]{Cheeger}, the Bishop-Gromov inequality \cite[Thm.5.31]{LottVillani}, \cite[Thm.2.3]{Sturm06b} and the Poincar\'e inequality \cite[Thm.1]{Rajala}.
For any $k \geq 1$, we denote by $\mathcal{R}_k$ the $k$-dimensional regular set  of $(X, \dist, \meas)$, 
namely the set of points $x \in X$ such that $(X, r^{-1}\dist, \meas (B_{r}(x))^{-1}\meas, x)$ pointed measured Gromov-Hausdorff converge to $(\mathbb{R}^k, \dist_{\mathbb{R}^k}, \omega_k^{-1}\mathcal{L}^k,0_k)$ as $r \to 0^+$, where $B_r(x)$ denotes the open ball centered at $x$ with the radius $r$.
It is proved in \cite[Thm.0.1]{BrueSemola}
that if $(X,\dist,\meas)$ is an $\RCD (K,N)$ space with $N<\infty$, then there exists a unique integer $n\in [1,N]$, denoted by $\dim_{\dist,\meas}(X)$, called the essential dimension of $(X, \dist, \meas)$, such that
 \begin{equation}\label{eq:regular set is full}
   \meas(X\setminus \mathcal{R}_n\bigr)=0.
 \end{equation}

\subsection{The heat kernel}
 Throughout this paper the parameters $K\in\mathbb{R}$ and $N \in [1, \infty)$ will
 be kept fixed. Let us fix a $\RCD(K, N)$ space $(X, \dist, \meas)$.
 Then thanks to \cite[Prop.2.3]{Sturm95} and \cite[Cor.3.3]{Sturm96}, the (H\"older continuous) heat kernel $p:X\times X \times (0, \infty) \to (0, \infty)$ of $(X, \dist, \meas)$ is well-defined by satisfying
\begin{equation}
   h_tf=\int_Xp(x, y, t)f(y)\di \meas(y),\quad \forall f \in L^2(X, \meas),
\end{equation}
 where $h_t:L^2(X, \meas) \to L^2(X, \meas)$ is the heat flow associated with the Cheeger energy $\Ch$. 
The sharp Gaussian estimates on this heat kernel proved in \cite[Thm.1.2]{JiangLiZhang} state that
for any $\epsilon>0$, there exist $C_i:=C_i(\epsilon, K, N)>1$ for $i=1,\,2$, depending only on $K$, $N$ and $\epsilon$, such that 
 \begin{equation}\label{eq:gaussian}
  \small \frac{C_1^{-1}}{\meas (B_{\sqrt{t}}(x))}\exp \left(-\frac{\dist^2 (x, y)}{(4-\epsilon)t}-C_2t \right) \le p(x, y, t) \le \frac{C_1}{\meas (B_{\sqrt{t}}(x))}\exp \left( -\frac{\dist^2 (x, y)}{(4+\epsilon)t}+C_2t \right)
\end{equation}
for all $x,\, y \in X$ and any $t>0$, where from now on we state our inequalities with the H\"older continuous representative. Combining (\ref{eq:gaussian}) with the Li-Yau inequality \cite[Cor.1.5]{GarofaloMondino}, \cite[Thm.1.2]{Jiang15}, 
we have a gradient estimate \cite[Cor.1.2]{JiangLiZhang}:
 \begin{equation}\label{eq:equi lip}
  |\nabla_x p(x, y, t)|\le \frac{C_3}{\sqrt{t}\meas (B_{\sqrt{t}}(x))}\exp \left(-\frac{\dist^2(x, y)}{(4+\epsilon) t}+C_4t\right)
  \qquad\text{for $\meas$-a.e. $x\in X$}
\end{equation}
for any $t>0$, $y\in X$, where $C_i:=C_i(\epsilon, K, N)>1$ for $i=3,\,4$.
\subsection{Embedding}\label{sub:embedding}
Throughout the subsection, we only refer to \cite{Gigli} for the details of tensor fields on $\RCD$ spaces, including:
\begin{itemize}
  \item the spaces of all $L^p$-$1$-forms, of all $L^p$-tensor fields of type $(0, 2)$, denoted by $L^p(T^*(X, \dist, \meas))$, $L^p((T^*)^{\otimes 2}(X, \dist, \meas))$, respectively;
  \item the pointwise scalar product $\langle S, T\rangle$ for two tensor fields of the same type. 
\end{itemize}
Note that one of the canonical operators, the so-called exterior derivative for functions, $\di: H^{1,2}(X, \dist, \meas) \to L^2(T^*(X, \dist, \meas))$ satisfy $|\di f|=|\nabla f|$ for $\meas$-a.e. $x \in X$.

Let us fix a compact $\RCD(K, N)$ space $(X, \dist, \meas)$ with $n=\dim_{\dist, \meas}(X)$. Then thanks to the Bishop-Gromov inequality and the Poincar\'e inequality, we know that the canonical inclusion $H^{1, 2}(X, \dist, \meas) \hookrightarrow L^2(X, \meas)$ is a compact operator by \cite[Thm.8.1]{HK}. In particular the heat kernel $p$ of $(X, \dist, \meas)$ has the following expansion:
\begin{equation}\label{eq:expansion1}
  p(x,y,t) = \sum_{i \ge 0} e^{- \lambda_i t} \phi_i(x) \phi_i (y) \qquad \text{in $C(X\times X)$}
\end{equation}
for any $t>0$ and
 \begin{equation}\label{eq:expansion2}
   p(\cdot,y,t) = \sum_{i \ge 0} e^{- \lambda_i t} \phi_i(y) \phi_i \qquad \text{in $H^{1,2}(X,\dist,\meas)$}
 \end{equation}
for any $y\in X$ and $t>0$,
where 
 \begin{equation}
   0=\lambda_0 < \lambda_1 \le \lambda_2 \le \cdots \to \infty
 \end{equation}
denote the discrete nonnegative spectrum of $-\Delta$ counted with multiplicities, and $\phi_0, \phi_1, \ldots $ are the corresponding (H\"older continuous) eigenfunctions with $\|\phi_i\|_{L^2}=1$. 
Combining (\ref{eq:expansion1}) and (\ref{eq:expansion2}) with (\ref{eq:equi lip}), we know that $\phi_i$ is Lipschitz, in fact, it holds that
 \begin{equation}\label{eq:eigenest}
   \|\phi_i\|_{L^\infty} \leq C_5 \lambda_i^{N/4}, \qquad \| \nabla \phi_i \|_{L^\infty} \leq C_5 \lambda_i^{(N+2)/4}, \qquad \lambda_i \ge C_5^{-1}i^{2/N},
 \end{equation}
where $C_5:=C_5(\mathrm{diam} (X, \dist), K, N)>0$.

It is proved in (the proof of) \cite[Prop.4.1]{AHPT} by using (\ref{eq:expansion1}) that for any $t>0$ the map $\Phi_t:X \to L^2(X, \meas)$ defined by
 \begin{equation}
   \Phi_t(x)(y):=p(x, y, t)
 \end{equation}
is a topological embedding. Then since (\ref{eq:equi lip}) proves that $\Phi_t$ is Lipschitz, we can define the pull-back metric $\Phi_t^*g_{L^2}$, denoted by $g_t$, by
 \begin{equation}\label{DefPullBack}
   g_t:=\sum_ie^{-2\lambda_it}\di \phi_i \otimes \di \phi_i,\quad \mathrm{in}\,\,L^{\infty}\left((T^*)^{\otimes 2}(X, \dist, \meas)\right),
  \end{equation}
 Note that in \cite{AHPT}, the equality of (\ref{DefPullBack}) is stated in $L^2((T^*)^{\otimes 2}(X, \dist, \meas))$, however, thanks to (\ref{eq:equi lip}), this holds in $L^{\infty}((T^*)^{\otimes 2}(X, \dist, \meas))$, and that there exists a unique $g=g_{(X, \dist, \meas)} \in L^{\infty}((T^*)^{\otimes 2}(X, \dist, \meas))$, called the Riemannian metric of $(X, \dist, \meas)$, such that $\langle g, \di f_1 \otimes \di f_2\rangle (x)=\langle \nabla f_1, \nabla f_2\rangle (x)$ holds for $\meas$-a.e. $x \in X$.

A convergence result proved in \cite[Thm.5.10]{AHPT} states that 
 \begin{equation}\label{eq:ahpt}
   \frac{c(n)t}{\omega_n}\meas (B_{\sqrt{t}}(x))g_t \to g,\quad \mathrm{in}\,\,L^p\left((T^*)^{\otimes 2}(X, \dist, \meas)\right),
 \end{equation}
for all $p \in [1, \infty)$.
In particular if $\meas \le C\mathcal{H}^n$ holds for some $C>0$, since
\begin{equation}
\frac{\meas (B_r(x))}{\omega_nr^n} \to \frac{\di \meas}{\di \mathcal{H}^n}(x), \quad \mathrm{for}\,\,\meas-a.e.\, x\in X
\end{equation}
as $r \to 0^+$ which is proved in \cite[Thm.4.1]{AmbrosioHondaTewodrose} as a more general result, then combining the dominated convergence theorem with (\ref{eq:ahpt}) yields 
\begin{equation}\label{eq:firstpricipal}
c(n)t^{(n+2)/2}g_t \to \frac{\di \mathcal{H}^n}{\di \meas} g,\quad \mathrm{in}\,\,L^p\left((T^*)^{\otimes 2}(X, \dist, \meas)\right).
\end{equation}
See \cite[Thm.5.15]{AHPT} for a more general statement.

\section{Second principal term in weighted Riemannian case}\label{sec:3}

{

Let us start this section by discussing relationships between the notions that appeared in the previous section and smooth objects.
We fix a smooth weighted complete Riemannian manifold $(M^n, g, \mathrm{vol}_{f}^g)$, where $f \in C^{\infty}(M^n)$, and for any Borel subset $A$ of $M^n$,
\begin{equation}
\mathrm{vol}_{f}^g(A) :=\int_{A}e^{-f}\di \mathrm{vol}^g.
\end{equation}
Recall that $(M^n, \dist^{g}, \mathrm{vol}_{f}^g)$ is an $\RCD(K, N)$ space if and only if $n \ge N$, and
\begin{equation}\label{eq:be}
\mathrm{Ric}^g+\mathrm{Hess}_f^g-\frac{\di f \otimes\di f}{N-n} \ge Kg,
\end{equation}
where
if $n=N$ holds, then (\ref{eq:be}) is understood as that $f$ is constant and that $\mathrm{Ric}^g \ge Kg$ holds, see \cite[Prop.4.21]{ErbarKuwadaSturm}. In particular if $M^n$ is closed, then for any $N >n$ there exists $K \in \mathbb{R}$ such that $(M^n, \dist^{g}, \mathrm{vol}_{f}^g)$ is an $\RCD(K, N)$ space whose essential dimension is trivially equal to $n$.
This setting will be discussed in the following subsections. 



Let us discuss the Laplacian $\Delta$ on a metric measure space $(M^n, \dist^g, \mathrm{vol}_{f}^g)$ as defined in the subsection \ref{subsec2}. This coincides with the weighted Laplacian $\Delta_f^g$ for any $\phi \in C^{\infty}(M^n) \cap D(\Delta)$ namely;
\begin{equation}\label{eq:witten}
\Delta^g_f\phi:=\mathrm{tr}(\mathrm{Hess}_{\phi}^g)-g(\nabla f, \nabla \phi)
\end{equation}
because we see
\begin{equation}
\int_Xg(\nabla \psi, \nabla \phi)\di \mathrm{vol}^g_f=-\int_X\psi \Delta^g_f\phi \di \mathrm{vol}^g_f,\quad \forall \psi \in C_c^{\infty}(M^n)
\end{equation}
which implies the coincidence between $\int \Delta^g_f\phi\di \mathrm{vol}^g_f$ and $\int \Delta \phi \di \mathrm{vol}^g_f$ as measures.
Then the heat flow $h_{f, t}$ on the metric measure space $(M^n, \dist^g, \mathrm{vol}_{f}^g)$ is uniquely determined as follows: for any $\phi \in L^2(M^n, \mathrm{vol}_f^g)$, the map $t \mapsto h_{f, t}\phi \in L^2(M^n, \mathrm{vol}^g_f)$ is smooth on $(0, \infty)$ with $h_{f, t}\phi \in C^{\infty}(M^n) \cap D(\Delta)$,
\begin{equation}
\frac{\di}{\di t}h_{f, t}\phi =\Delta_f^g h_{f, t}\phi \quad \mathrm{in}\,L^2(M^n, \mathrm{vol}^g_f),
\end{equation}
and $h_{f, t}\phi \to \phi$ in $L^2(M^n, \mathrm{vol}^g_f)$ as $t \to 0^+$ (see for instance
\cite[Thm. 4.9]{Grig}). 
Finally the heat kernel $p_f$ is uniquely determined by being smooth and satisfying
\begin{equation}
h_{f, t}\phi(x)=\int_{M^n}p_f(x, y, t)\phi(y)\di \mathrm{vol}^g_f(y),\quad \forall \phi \in L^2(M^n, \mathrm{vol}^g_f),\,\,\,\forall x \in M^n.
\end{equation}
It is worth pointing out that similar observations above are also justified in the case when $(M^n, g, \mathrm{vol}^g_f)$ is the interior of a smooth weighted compact Riemannian manifold with smooth boundary after replacing the Laplacian, the heat flow, and the heat kernel by the \textit{Dirichlet}'s ones, respectively. We omit the details, where this will play a role to find an example which shows that Theorem \ref{thm:main} is sharp in some sense (see the proof of Proposition \ref{prop:AlmostSmoothBBG}).

}

From now on, let $(r,\xi^1,\xi^2,\ldots, \xi^{n}):=(r,\xi)$ be the normal coordinates around $x \in M^n$, and $g(r,\xi)$ be the Riemannian metric at the point $(r,\xi)$ in the normal coordinates. We introduce the following elementary lemma which will play a role later. 
{ In the following lemma and in the sequel, we know from the proofs, all remainder terms of the form $O(t^k)=O_{f, g}(t^k)$ on $(M^n, g)$ as $t\to 0^+$ have smooth coefficients and depend only on the metric $g$ and the weight $f$.}

  \begin{lemma}\label{lem:volasym}
    For any $x\in M^n$ we have the following asymptotic expansion as $r \to 0^+$
    \begin{equation}\label{eq:volasymp}
	  \vol_{f}^g(B_r(x))=\omega_n r^ne^{-f(x)}\left(1-\frac{\mathrm{Scal}^g+3\Delta^g f-3|\nabla f|^2}{6(n+2)}r^2 +O(r^3)\right),
    \end{equation}

   Moreover, { the remainder in the asymptotic expansion} (\ref{eq:volasymp}) {has a} uniform {bound} for any compact subset $K\subset M^n$ in the sense that 
    \begin{equation}\label{eq:unifasym}
      \resizebox{.85\hsize}{!}{$\sup\limits_{x \in K, r<1}r^{-3-n}\left|\vol_{f}^g(B_r(x))-\omega_n r^ne^{-f(x)}\left(1-\frac{\mathrm{Scal}^g+3\Delta^g f-3|\nabla f|^2}{6(n+2)}r^2\right) \right|<\infty$}.
    \end{equation}
  \end{lemma}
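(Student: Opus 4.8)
The plan is to compute directly in geodesic normal coordinates centered at $x$. Fix $x\in M^n$ and let $y=(y^1,\dots,y^n)$ be normal coordinates given by $\exp_x$, valid on $\{|y|<r\}$ for every $r$ below the injectivity radius at $x$ (here completeness is irrelevant, since only small geodesic balls are involved). Because radial geodesics are unit-speed in these coordinates, $\dist^g(x,\exp_x(y))=|y|$, so the metric ball $B_r(x)$ is exactly the Euclidean ball $\{|y|<r\}$ and
\[
  \vol_{f}^g(B_r(x))=\int_{\{|y|<r\}}e^{-f(\exp_x y)}\sqrt{\det g(y)}\,\di y .
\]
Thus everything reduces to Taylor expanding the integrand to second order in $y$ and integrating over a symmetric domain.

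First I would recall the classical expansion of the Riemannian volume density, $\sqrt{\det g(y)}=1-\tfrac16\mathrm{Ric}_{ij}(x)\,y^iy^j+O(|y|^3)$, where $\mathrm{Ric}_{ij}(x)$ are the components of $\mathrm{Ric}^g$ at $x$. Next, since all Christoffel symbols vanish at the center of normal coordinates, the coordinate second derivatives of $f$ at $0$ agree with the Hessian, giving $f(\exp_x y)=f(x)+a_iy^i+\tfrac12(\mathrm{Hess}_f^g)_{ij}(x)\,y^iy^j+O(|y|^3)$ with $a_i=\partial_{y^i}f(x)$; as $g_{ij}(0)=\delta_{ij}$, we have $\sum_i a_i^2=|\nabla f|^2(x)$, $\sum_i(\mathrm{Hess}_f^g)_{ii}=\Delta^g f$ and $\sum_i\mathrm{Ric}_{ii}=\mathrm{Scal}^g$ at $x$. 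Exponentiating and multiplying the two expansions yields
\[
  e^{-f(\exp_x y)}\sqrt{\det g(y)}=e^{-f(x)}\Bigl(1-a_iy^i+\tfrac12\bigl(a_ia_j-(\mathrm{Hess}_f^g)_{ij}-\tfrac13\mathrm{Ric}_{ij}\bigr)y^iy^j+O(|y|^3)\Bigr).
\]

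Then I would integrate term by term over $\{|y|<r\}$. By parity the linear term vanishes; using $\int_{\{|y|<r\}}\di y=\omega_n r^n$ and $\int_{\{|y|<r\}}y^iy^j\,\di y=\delta_{ij}\tfrac{\omega_n}{n+2}r^{n+2}$, the quadratic term contributes $e^{-f(x)}\tfrac{\omega_n}{n+2}\bigl(\tfrac12|\nabla f|^2-\tfrac12\Delta^g f-\tfrac16\mathrm{Scal}^g\bigr)r^{n+2}$, while the $O(|y|^3)$ remainder integrates to $O(r^{n+3})$. Dividing out $e^{-f(x)}\omega_n r^n$ turns the $r^2$-coefficient into $\tfrac{1}{n+2}\bigl(\tfrac12|\nabla f|^2-\tfrac12\Delta^g f-\tfrac16\mathrm{Scal}^g\bigr)=-\tfrac{1}{6(n+2)}(\mathrm{Scal}^g+3\Delta^g f-3|\nabla f|^2)$, which is exactly \eqref{eq:volasymp}.

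Finally, for the uniform statement \eqref{eq:unifasym} I would track the remainders. On a compact set $K$ the injectivity radius is bounded below by some $r_0>0$, so normal coordinates of radius $r_0$ exist for every $x\in K$; moreover $g$, $f$ and their derivatives up to the relevant order (hence $\mathrm{Ric}$, $\mathrm{Hess}_f^g$, $\mathrm{Scal}^g$, $\nabla f$) are continuous, hence bounded, on a compact neighborhood of $K$. The second-order Taylor remainders of $\sqrt{\det g}$ and of $e^{-f}\circ\exp_x$ are therefore dominated by $C|y|^3$ with $C$ independent of $x\in K$, and integrating this bound over $\{|y|<r\}$ gives the uniform error $O(r^{n+3})$ asserted in \eqref{eq:unifasym}. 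The computation itself is entirely standard; the only point demanding care is this uniform control of the remainder, which is purely a matter of compactness together with the continuity of the geometric data, so I do not anticipate a genuine obstacle.
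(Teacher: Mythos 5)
Your proposal is correct and follows essentially the same route as the paper: Taylor-expand the Riemannian volume density and the weight $e^{-f}$ to second order in normal coordinates at $x$, integrate over the ball (the odd term vanishing by symmetry), and obtain the $r^2$-coefficient $-\frac{1}{6(n+2)}(\mathrm{Scal}^g+3\Delta^g f-3|\nabla f|^2)$; the only cosmetic difference is that the paper organizes the integration in polar form over $S^{n-1}\times(0,r)$ while you use Cartesian moments $\int_{\{|y|<r\}}y^iy^j\di y=\delta_{ij}\tfrac{\omega_n}{n+2}r^{n+2}$, which is the same computation. Your handling of the uniformity via a positive lower bound on the injectivity radius over $K$ and uniform bounds on the Taylor remainders is exactly what the paper's remark that the uniform bound ``follows from this argument easily'' is alluding to.
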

  
\begin{proof}
	 Recall that for any unit vector $v\in T_xM$ and any geodesic $\gamma$ from $x$ with $\dot{\gamma}(0)=v$, it follows from Taylor expansion that
   \begin{align}
	    \sqrt{\det g(\gamma(t))} &=1-\frac{\mathrm{Ric}^g(v, v)}{6}t^2+O(t^3), \label{volexp} \\
	e^{-f(\gamma(t))+f(x)}   &=1-\langle\nabla f(x),v\rangle t+\frac{1}{2}\left(-\mathrm{Hess}_f^g(v,v)+|\langle\nabla f(x),v\rangle|^2\right)t^2+O(t^3).\label{eq:densityexp}
   \end{align}
Thus we have
    \begin{align*}
       \vol_{f}^g(B_r(x))&=\int_0^r\int_{S^{n-1}}\left(1-\frac{\mathrm{Ric}^g_{ij}}{6}\xi^i\xi^jt^2+O(t^3)\right)\cdot \\
                &\quad\left[1-(\nabla f)_i\xi^it+\frac{1}{2}\left(-\mathrm{Hess}_{f,ij}^g+(\di f\otimes\di f)_{ij}\right)\xi^i\xi^j t^2+O(t^3)\right]e^{-f(x)}t^{n-1}\di \xi \di t\\
                &=\omega_n r^ne^{-f(x)}\left(1-\frac{\mathrm{Scal}^g+3\Delta^g f-3|\nabla f|^2}{6(n+2)}r^2 +O(r^3)\right)
    \end{align*}
    as desired, where $\mathrm{Hess}_f^g$, $\di f\otimes\di f$, $\nabla f$ and $\mathrm{Ric}^g$ are all evaluated at $x$. By expanding the left hand side of (\ref{volexp}) and (\ref{eq:densityexp}) to the $t^3$ or higher order terms, we can infer that the coefficients involves the derivatives of the Riemannian curvature tensor, and the derivatives of $f$, respectively. Since they are all smooth objects, they are uniformly bounded on any compact set $K$, so the uniform bound (\ref{eq:unifasym}) follows.
  \end{proof}


\subsection{The weighted heat kernel expansion}\label{subsec:setup}
{From now on we assume that $M^n$ is closed. 
Denote by $\mathrm{inj}^g$ the injectivity radius of $(M^n, g)$,} consider 
$$
V=\{(x,y)\in M^n \times M^n:\dist^g(x,y)<{\mathrm{inj}^g/2}\}.
$$
Fix $k \in \mathbb{Z}_{>0}$, let us find $u_j{=u_{j, k}}\in C^{\infty}(V)$, $j={0,} 1, 2, \ldots, k$ such that
\begin{equation}\label{eq:recursion}
   \left(\Delta^g_{f,x}-\partial_t\right)S_k=\frac{1}{(4\pi t)^{n/2}}\exp \left(-\frac{\dist^g(x,y)^2}{4t}\right)\cdot t^k\cdot\Delta^g_{f,x} \left(u_ke^A\right), \quad \forall (x, y) \in V
\end{equation}
holds, where $A=A(x,y)= \frac{f(x)+f(y)}{2}$ and
   \begin{equation}\label{S_k}
	S_k(x,y,t)=\frac{1}{(4\pi t)^{n/2}}\exp \left(-\frac{\dist^g(x,y)^2}{4t}+A(x, y)\right)\cdot \sum_{j=0}^kt^ju_j(x,y).
    \end{equation}
{ In fact}, the desired functions $u_j$ are { uniquely} obtained as follows, {in particular $u_{j, k}$ is independent of $k$.} 
    \begin{lemma}\label{solveU}
     We have
       \begin{equation}
       \resizebox{.9\hsize}{!}{$ \begin{split}\label{u_jrecur}
            u_0(x,y)=&D^{-\frac{1}{2}}(y)\\
           u_j(x,y)=&\dist^g(x, y)^{-j}D^{-1/2}(y)	\left[ \int_0^{\dist^g(x,y)} D^{1/2}(\gamma(s))\Delta^g_{\gamma(s)}u_{j-1}(x,\gamma(s))s^{j-1}\di s\right.\\
                      &\left. +\int_0^{\dist^g(x, y)}D^{1/2}(\gamma(s))\left(\frac{1}{2}\Delta^g f(\gamma(s))-\frac{1}{4}|\nabla f(\gamma(s))|^2\right)u_{j-1}(x,\gamma(s))s^{j-1}\di s\right]
          \end{split}$}
         \end{equation}
           where $j\ge 1$ and $\gamma$ is the unit speed minimal geodesic from $x$ to $y$, and $D(y)=\frac{\sqrt{\det g(r,\xi)}}{\dist^g(x, y)^{n-1}}$ which is the volume density at $y$ in normal coordinates $(r,\xi)$ around $x$.
     \end{lemma}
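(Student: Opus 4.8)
The plan is to treat Lemma \ref{solveU} as the weighted version of the classical Minakshisundaram--Pleijel parametrix construction, the decisive simplification being a conjugation that removes the first-order drift of $\Delta^g_f$ and thereby explains both the prefactor $e^A$ and the potential term appearing in \eqref{u_jrecur}.

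\textbf{Reduction to a Schrödinger operator.} First I would record the pointwise identity
$$e^{-f/2}\,\Delta^g_f\bigl(e^{f/2}\psi\bigr)=\Delta^g\psi+q\psi,\qquad q:=\frac{1}{2}\Delta^g f-\frac{1}{4}|\nabla f|^2,$$
valid for every smooth $\psi$, which follows by expanding $\Delta^g_f=\Delta^g-g(\nabla f,\nabla\,\cdot\,)$ from \eqref{eq:witten} via the Leibniz rule. Since $A(x,y)=\tfrac12(f(x)+f(y))$ and the factor $e^{f(y)/2}$ is constant in $x$, this yields $\Delta^g_{f,x}(e^A\Phi)=e^A(\Delta^g_x+q)\Phi$ for any $\Phi=\Phi(x)$. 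Writing $P:=(4\pi t)^{-n/2}\exp(-\dist^g(x,y)^2/4t)$ and $\Phi:=P\sum_{j=0}^kt^ju_j$, so that $S_k=e^A\Phi$ in \eqref{S_k}, I obtain
$$\bigl(\Delta^g_{f,x}-\partial_t\bigr)S_k=e^A\Bigl[\bigl(\Delta^g_x-\partial_t\bigr)\Phi+q\Phi\Bigr].$$
Thus it suffices to analyse the \emph{ordinary} heat operator applied to $\Phi$ and to add the potential $q$; the symmetric weight $\tfrac12\Delta^g f-\tfrac14|\nabla f|^2$ in \eqref{u_jrecur} is exactly this $q$.

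\textbf{Transport equations and their solution.} With $r:=\dist^g(x,y)$, using $|\nabla r|=1$ and the polar-coordinate identity $\Delta^g r=\frac{n-1}{r}+\partial_r\log D$ (equivalently $\Delta^g(r^2/2)=n+r\partial_r\log D$), a direct computation of $\partial_tP$, $\nabla P$ and $\Delta^g P$ gives, for time-independent $u$,
$$\bigl(\Delta^g_x-\partial_t\bigr)(Pu)=P\,\Delta^g u-\frac{P}{t}\Bigl(r\,\partial_r u+\frac{r}{2}(\partial_r\log D)\,u\Bigr).$$
Substituting $\Phi=P\sum_j t^ju_j$, adding $q\Phi$, and collecting powers of $t$, the coefficient of $t^{-1}$ forces $r\partial_r u_0+\tfrac r2(\partial_r\log D)u_0=0$, while the coefficient of $t^{j-1}$ for $1\le j\le k$ forces
$$r\,\partial_r u_j+\Bigl(j+\frac{r}{2}\partial_r\log D\Bigr)u_j=(\Delta^g+q)u_{j-1}.$$
The only surviving term is $Pt^k(\Delta^g+q)u_k=Pt^ke^{-A}\Delta^g_{f,x}(e^Au_k)$, which is precisely the right-hand side of \eqref{eq:recursion}. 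The $j=0$ equation integrates to $u_0=D^{-1/2}$ once the diagonal normalization $u_0\equiv1$ fixes the constant; for $j\ge1$ the integrating factor $r^{j-1}D^{1/2}$ recasts the equation as $\frac{d}{dr}\bigl(r^jD^{1/2}u_j\bigr)=r^{j-1}D^{1/2}(\Delta^g+q)u_{j-1}$, and integrating from $0$ along the minimal geodesic $\gamma$, then splitting $(\Delta^g+q)u_{j-1}$ into its Laplacian and potential parts, reproduces \eqref{u_jrecur} verbatim.

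\textbf{Main obstacle.} The genuinely delicate point is the curved-space computation above: one must verify the precise cancellation of the $t^{-2}$ and $t^{-1}$ contributions coming from $\partial_tP$ and $\Delta^g P$, which hinges entirely on $\Delta^g r=\frac{n-1}{r}+\partial_r\log D$, so that only the transport operator $-\tfrac1t\bigl(r\partial_r+\tfrac r2\partial_r\log D\bigr)$ survives at order $t^{-1}$. The second subtlety is the regularity selection: the homogeneous solution $r^{-j}D^{-1/2}$ is singular at $r=0$, so demanding that each $u_j$ extend smoothly across the diagonal forces the vanishing integration constant and makes the $u_j$ unique; this is exactly where the hypothesis $(x,y)\in V$, giving a unique minimal geodesic and valid normal coordinates, enters. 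Finally I would note that the van Vleck--Morette density and the coefficients $u_j$ are symmetric in $(x,y)$, so the formula may equivalently be written, as in \eqref{u_jrecur}, with normal coordinates centered at $x$ and $D$ evaluated at $y$.
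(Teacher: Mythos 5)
Your proof is correct and follows essentially the same route as the paper: it derives exactly the transport equations (\ref{eq:recursion_mid}) by matching powers of $t$, then solves them with the integrating factor $r^{j-1}D^{1/2}$ (variation of parameters), with smoothness at $r=0$ fixing the integration constants. The only difference is presentational --- your conjugation identity $e^{-f/2}\Delta^g_f(e^{f/2}\psi)=\Delta^g\psi+\bigl(\tfrac12\Delta^g f-\tfrac14|\nabla f|^2\bigr)\psi$ is a clean way to organize the computation that the paper leaves as a sketch, and it correctly explains the origin of both the factor $e^A$ and the potential term in (\ref{u_jrecur}).
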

     
  \begin{proof}
   From (\ref{eq:recursion}) with (\ref{eq:witten}), we obtain that (\ref{eq:recursion}) is equivalent to
     \begin{equation}\label{eq:recursion_mid}
      \resizebox{.9\hsize}{!}{$\begin{split}
         0&=\dist^g(x, y)\partial_r u_0+\frac{\dist^g(x, y)}{2}\frac{\partial_r D}{D}u_0\\
	     0&=\dist^g(x, y)\partial_r u_j+\left(j+\frac{\dist^g(x, y)}{2}\frac{\partial_r D}{D}\right)u_j-\Delta^g u_{j-1}-\left(\frac{1}{2}\Delta^g f-\frac{1}{4}|\nabla f|^2\right)u_{j-1} 
      \end{split}$}
     \end{equation}
where $ j\geq 1$ and $r=\dist^g(x, y)$ and $\partial_r$ is the radial derivative from $x$, we give a sketch of this computation.
 Solve the first equation of (\ref{eq:recursion_mid}), to get $u_0(x,y)=C(\xi)D^{-\frac{1}{2}}(y)$, note that $u_0(x,x)=1$, so $C(\xi)=1$, then we get the first equality of (\ref{u_jrecur}). To yield the second equation of (\ref{u_jrecur}), we first solve the corresponding homogeneous equation of the second equation of (\ref{eq:recursion_mid}), which is  
    \begin{equation}
 	\dist^g(x, y)\partial_r u_j+\left(j+\frac{\dist^g(x, y)}{2}\frac{\partial_r D}{D}\right)u_j=0,
    \end{equation}
then we use the method of variation of parameters to finish the computation. 
\end{proof}



We follow \cite{Chavel} closely. 
The first step is to extend $S_k$ to whole $M^n\times M^n$ by multiplying a cut-off function $\phi(x,y)\in C^{\infty}(M^n\times M^n)$ so that for each $y\in M^n$, $\phi(x,y)=0$ on $M^n \setminus B_{{\mathrm{inj}^g/2}}(y)$, $\phi(x,y)=1$ on $B_{{\mathrm{inj}^g/4}}(y)$ and $0\le\phi(x,y)\le 1$. Let
\begin{equation}\label{eq:cutoff}
H_k(x,y,t):=\phi(x,y)S_k(x,y,t) \in C^{\infty}\left(M^n\times M^n\times(0,\infty)\right).
\end{equation}

%
%
  The following properties are known for $H_k$:
 
\begin{enumerate}
\item {\label{eq:regularityH} $(\partial_t-\Delta_f^g)H_k$ extends to $t=0$ and} $(\partial_t-\Delta_f^g)H_k\in C^{\ell}(M^n\times M^n\times [0,\infty))$ for integer $\ell<k-\frac{n}{2}$
;
\item \label{eq:short timeH} $H_k(x,y,t)\to \delta_y(x)$ for all ${x, }y\in M^n$ {as $t \to 0^+$}.	
\end{enumerate}

See \cite[Lem 1, Chap. VI Sec. 4]{Chavel}, and \cite[Lem. 3.18]{Rosenberg}. 

We are now in position to establish the asymptotic expansion of $p_f$. It is worth pointing out that (\ref{u_1}) is computed in \cite{MS} with a slightly different normalization of the heat kernel. 

We introduce the (weighted) convolution $F*H$ for $F,H\in C^{0}(M^n)$:
$$
F*H(x,y,t)=\int_0^t\int_M F(x,z,s)H(z,y,t-s)\di \mathrm{vol}_f^g(z)ds, \quad \forall F, H \in C^0(M^n\times M^n\times [0,\infty))
$$
and denote $H^{*j}=H*H*\cdots *H$ for $j$-fold convolution. Let 
 \begin{equation}
     F_k=\sum_{j\geq 0}(-1)^{j+1}((\partial_t-\Delta^g_f)H_k)^{*j}.
 \end{equation}

{
 Although the following are quite standard, for the reader's convenience, we show some similar estimates as in \cite[p.152 Lemma 1]{Chavel}. First note that
\begin{equation}\label{eq:S_k esti}
	(\partial_t-\Delta_f^g)H_k= \phi(\partial_t-\Delta_f^g)S_k-2\langle \nabla \phi,\nabla S_k\rangle-S_k\Delta_f^g\phi.  
\end{equation}
Recalling (\ref{eq:recursion}), we see that the first term on the RHS of the above equation is bounded by $C(f,g) t^{k-\frac{n}{2}}$. The rest 2 terms decay exponentially as $t\to 0^+$ because $\nabla \phi$ and $\Delta_f^g\phi$ vanishes near the diagonal and the Gaussian term gives the exponential decay away from the diagonal, thus 
 for any $t\in [0,1]$ the last 2 terms are bounded by $C(f,g)t^{k-\frac{n}{2}}$. Thus it follows that 
    \begin{align}\label{eq:esti H}
       \left\|(\partial_t-\Delta_f^g)H_k(\cdot,\cdot,t)\right\|_{L^\infty(M^n\times M^n)}&\le C(f, g)t^{k-n/2}, \quad \forall t\in (0,1].
  \end{align}
Let 
\begin{equation}
	(\partial_t-\Delta_f^g)H_k(x,y,t)=t^{k-\frac{n}{2}}\exp\left(-\frac{\dist^g(x,y)^2}{4t}+A(x,y)\right)G_k(x,y,t).
\end{equation}
 We see from \eqref{eq:S_k esti} that $G_k\in C^{\infty}(M^n\times M^n\times [0,\infty))$. Set 
$$
B:= \sup_{M^n\times M^n\times [0,1]} |G_k|,
$$
we have for any $t\in (0,1]$,
\begin{align}\label{eq:convesti}
	|((&\partial_t -\Delta_f^g)H_k)^{*2}| (x,y,t)\notag\\
	&\le\int_0^{t}\int_{M^n} s^{k-\frac{n}{2}}(t-s)^{k-\frac{n}{2}}|G_k(x,z,s)G_k(z,y,t-s)| e^{-\frac{\dist^g(x,z)^2}{4s}}e^{-\frac{\dist^g(z,y)^2}{4(t-s)}}e^{\frac{f(x)+f(y)}{2}}\di \vol^g(z)\di s \notag\\
	&\le B^2\vol^g(M) e^{-\frac{\dist^g(x,y)^2}{4t}+A(x,y)}\int_0^{t} s^{k-\frac{n}{2}}(t-s)^{k-\frac{n}{2}}\di s\notag \\
	&\le \frac{B^2\vol^g(M)t^{k-\frac{n}{2}+1}}{k-\frac{n}{2}+1}e^{-\frac{\dist^g(x,y)^2}{4t}+A(x,y)}.
\end{align}
Here, we have used $\vol^g_f=e^{-f}\vol^g$. Using induction, one can show similar Gaussian estimates for $((\partial_t-\Delta_f^g)H_k)^{*j}$, which is
\begin{equation}
	\left|((\partial_t-\Delta_f^g)H_k)^{*j}\right|\le \frac{B^{j}\vol^g(M)^{j-1} t^{k-\frac{n}{2}+j-1}}{(k-\frac{n}{2}+1)\cdots(k-\frac{n}{2}+j-1)}e^{-\frac{d^g(x,y)^2}{4t}+A(x,y)}
\end{equation}

in particular we have $F_k\in C^0(M^n\times M^n\times [0,\infty))$. Moreover, similar arguments can be applied iteratively to show:
\begin{enumerate}
\item for any integer $\ell<k-\frac{n}{2}$, we have  $F_k\in C^\ell(M^n\times M^n\times [0,\infty))$  with
   \begin{align}\label{eq:esti F}
     \|F_k(\cdot,\cdot,t)\|_{L^{\infty}(M^n \times M^n)}&<C(f, g)t^{k-n/2}, \quad \forall t\in (0,1];
  \end{align}
\item  for any integer $k>\frac{n}{2}+2$, we have
\begin{equation}\label{eq:remainderEsti}
	\|H_k*F_k\|_{L^\infty({M^n\times M^n})}<C(f, g) t^{k+1-\frac{n}{2}},\qquad \forall t\in (0,1];
\end{equation}
and 
\begin{equation}\label{eq:remainderEstiGauss}
	\left\|{F_k*H_k}\cdot\exp\left(\frac{(\dist^g)^2}{4t}-A\right) \right\|_{L^\infty({M^n\times M^n})}\le C(f,g) t^{k+1-\frac{n}{2}},
\qquad \forall t\in (0,1].	
\end{equation}

\end{enumerate}
}
Next, given any multi-index $\alpha=(\alpha_1,\ldots, \alpha_n)$, for then we write,
\begin{align}
	\partial_x^{\alpha}(\partial_t-\Delta)H_k=t^{k-\frac{n}{2}-|\alpha|}\exp\left(-\frac{\dist^g(x,y)^2}{4t}+A(x,y)\right)G_{k,\alpha}(x,y,t). 
\end{align}
It follows from \eqref{eq:S_k esti} and a direct computation that $G_{k,\alpha}\in C^{\infty}(M^n\times M^n\times [0,1])$, now repeat the computation in \eqref{eq:convesti} replacing $G_k$ with $G_{k,\alpha}$, we get for any integer $l\in \setN$,

\begin{equation}\label{eq:C^l esti}
	\left\|{F_k*H_k} \right\|_{C^l({M^n\times M^n})}\le C(f,g,l) t^{k+1-\frac{n}{2}-l},
\qquad \forall t\in (0,1].	
\end{equation}


\begin{theorem}\label{thm:WeightedExpansion}
For {all $x, y \in M^n$ with $\dist^g(x, y) <\mathrm{inj}^g /4$,} the heat kernel $p_f(x,y,t)$ has the following asymptotic expansion:
  \begin{equation}\label{HKexpansion}
     p_f(x,y,t)=\frac{1}{(4\pi t)^{n/2}}\exp \left(-\frac{\dist^g(x,y)^2}{4t}+A(x, y)\right)\left(\sum_{j=0}^kt^ju_j(x,y)+O(t^{k+1})\right)
   \end{equation}
as $t \to 0^+$. Moreover if $x=y$, then the { remainder in the} expansion { has a} uniform { bound}; 
\begin{equation}\label{eq:UniConvP}
\sup_{x\in {M^n}, t<1} t^{\frac{n}{2}-k}\left| p_f(x,x,t)- \frac{1}{(4\pi t)^{n/2}}e^{f(x)}\sum_{j=0}^{k-1}t^ju_j(x,x)	 \right|	<\infty.
\end{equation}
 In particular, we have 
    \begin{equation}\label{u_1}
	    u_1(x,x)=\frac{\mathrm{Scal}^g(x)}{6}-\frac{1}{2}\Delta^g f(x)+\frac{1}{4}|\nabla f(x)|^2.
    \end{equation}
\end{theorem}

 \begin{proof}
It is shown above that $S_k$ hence $H_k$ has this expansion. { Note that $H_k-H_k*F_k$ also solves the heat equation}. From 
 (\ref{eq:esti H}), (\ref{eq:esti F}) { and the uniqueness of the heat kernel}, we infer that for every $k>\frac{n}{2}+2$, $p_f=H_k-H_k*F_k\in C^{k-\frac{n}{2}}(M^n\times M^n\times(0,\infty))$ (see also \cite[Thm 3.22]{Rosenberg}). 
 Apply the inequality (\ref{eq:remainderEstiGauss}) to yield that  
 $$
 (p_f(x,y,t)-S_k(x,y,t))\cdot\exp\left(\frac{\dist^g(x,y)^2}{4t}-A(x,y)\right) =O(t^{k+1-n/2}),
 $$ 
 so $p_f$ has the same expansion as $S_k$ up to order $k-\frac{n}{2}$. 
{In particular when $x=y$, 
for each integer $k\ge1$, we have (\ref{eq:UniConvP}).}
 
    For the computation of $u_1$, recall in (\ref{u_jrecur}), we found that $u_0(x,y)=D^{-1/2}(y)$. Let $\gamma$ be as in Lemma \ref{solveU}, with (\ref{volexp}) we have
   \begin{equation}\label{u_0}
      u_0(x,y)= 1+\frac{1}{12}\mathrm{Ric}^g(\dot{\gamma}(0),\dot{\gamma}(0))\dist^g(x, y)^2+O(\dist^g(x, y)^3)	,
   \end{equation}
in particular $u_0(x,x)=1$. Then it follows that $\Delta^g u_0(x,x)=\mathrm{Scal}^g(x)/6$. Finally letting $y \to x$ in the second equation of (\ref{u_jrecur}) for $j=1$ leads to 
     \begin{equation*}
        u_1(x,x)=\Delta^g u_0(x,x)+\frac{1}{2}\Delta^g f(x)-\frac{1}{4}|\nabla f(x)|^2=\frac{\mathrm{Scal}^g(x)}{6}+\frac{1}{2}\Delta^g f(x)-\frac{1}{4}|\nabla f(x)|^2.
     \end{equation*}
 \end{proof}
 

 \subsection{Divergence free property of the weighted Einstein tensor on a closed manifold}\label{subsec:divfree}
 As discussed in section \ref{sec:2}, let us consider the heat kernel embedding: 
\begin{equation}
\Phi_{f, t}:M^n \hookrightarrow L^2(M^n, \mathrm{vol}_{f}^g)
\end{equation}
defined by
\begin{equation}
x \mapsto (y \mapsto p_f(x, y, t)).
\end{equation}
Put $g_{f, t}:=(\Phi_{f, t})^*g_{L^2}$.


To study the second principal term of $g_{f, t}$ (recall (\ref{eq:firstpricipal}) for the first principal term in more general setting) along the same way as in \cite{BerardBessonGallot}, it is necessary to generalize the heat kernel expansion in \cite[p.380]{BerardBessonGallot} to weighted manifolds.  
We claim:

    \begin{theorem}[Weighted version of B\'erard-Besson-Gallot theorem]\label{thm:bbgweighted}
          We have the following asymptotic formula as $t \to 0^+$
      \begin{equation}
          c(n)t^{(n+2)/2}g_{f, t}=e^fg-e^f\left(\frac{2}{3}G^g-\di f \otimes\di f -\Delta^gfg+\frac{|\nabla f|^2}{2}g\right)t +O(t^2),
      \end{equation}
where {the remainder in the expansion has a uniform bound}; 
    \begin{equation}\label{eq:weightedBBGconv}
      \resizebox{.9\hsize}{!} {$\sup\limits_{ x\in M^n, t<1}\left| t^{-2}\left(c(n)t^{(n+2)/2}g_{f, t}- \left( e^fg-e^f\left(\frac{2}{3}G^g-\di f \otimes\di f -\Delta^gfg+\frac{|\nabla f|^2}{2}g\right)t\right) \right)\right|(x)<\infty.$}
    \end{equation}
In particular, we have the uniform convergence:
     \begin{equation}\label{eq:asymptoticweightedriemann}
         \left\|\frac{c(n)t^{(n+2)/2}g_{f, t}-e^fg}{t} - e^f\left(-\frac{2}{3}G^g+\di f \otimes\di f +\Delta^g fg-\frac{|\nabla f|^2}{2}g\right) \right\|_{L^{\infty}}   \to 0.
    \end{equation}
   \end{theorem}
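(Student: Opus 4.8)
The plan is to reproduce the computation of B\'erard--Besson--Gallot while carrying the weight $f$ throughout, starting from the pointwise formula
$$g_{f, t}(x)=\int_{M^n}\di_xp_f(x,y,t)\otimes \di_xp_f(x,y,t)\di \mathrm{vol}_f^g(y).$$
First I would localize: by the off-diagonal Gaussian bounds together with the parametrix remainder estimate (\ref{eq:esti F}), the contribution of $\{y:\dist^g(x,y)\ge \epsilon\}$ is $O(t^\infty)$ uniformly in $x$, so only a small geodesic ball around $x$ matters, where Theorem \ref{thm:WeightedExpansion} applies. Inserting (\ref{HKexpansion}) and writing $r=\dist^g(x,y)$, $A=\tfrac12(f(x)+f(y))$, the differential is
$$\di_xp_f=\frac{e^{-r^2/4t+A}}{(4\pi t)^{n/2}}\left[\left(-\frac{\di_xr^2}{4t}+\tfrac12\di f(x)\right)(u_0+tu_1)+\di_xu_0+O(t)\right].$$
Forming the tensor square and multiplying by $\di \mathrm{vol}_f^g(y)=e^{-f(y)}\di \mathrm{vol}_g(y)$, the crucial simplification is the exact identity $e^{2A}e^{-f(y)}=e^{f(x)}$, which extracts the global factor $e^{f(x)}$ and reduces all remaining $f$-dependence to $\di f(x)$ (constant in $y$) and to $u_1$.

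Next I would pass to geodesic normal coordinates $\xi$ centred at $x$, in which $\di \mathrm{vol}_g=\sqrt{\det g(\xi)}\di \xi$, $r=|\xi|$, and, by the Gauss lemma, $\di_xr^2=-2\xi$ exactly at the centre, and then rescale $\xi=\sqrt{t}\,\eta$. The leading contribution comes from $\tfrac1{16t^2}\di_xr^2\otimes \di_xr^2\,u_0^2$; combined with the second Gaussian moment (which reproduces $g$) it produces the term of order $t^{-(n+2)/2}$, and matching it against $e^fg$ both fixes the normalisation $c(n)=4(8\pi)^{n/2}$ and yields the principal term $e^fg$. The terms I still need are exactly those of relative order $t$, i.e. of order $t^{-n/2}$ before multiplying by $c(n)t^{(n+2)/2}$.

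These subleading terms I would collect by parity and by Gaussian moment. The surviving (even-parity) contributions are: (i) the weight term $(\tfrac12\di f)^{\otimes 2}u_0^2$, which after Gaussian integration gives exactly $e^f\di f\otimes \di f\,t$, the mixed term $\di_xr^2\otimes \di f$ vanishing by oddness; (ii) the $u_1$ cross term $\tfrac1{8t}\di_xr^2\otimes \di_xr^2\,u_0u_1$, contributing $u_1(x,x)g=\left(\tfrac{\mathrm{Scal}^g}{6}-\tfrac12\Delta^gf+\tfrac14|\nabla f|^2\right)g$ up to a universal constant; and (iii) the curvature corrections carried by $u_0^2$ and $\di_xu_0$ through (\ref{u_0}), and by the volume density $\sqrt{\det g}=1-\tfrac16\mathrm{Ric}_{ij}\xi^i\xi^j+O(|\xi|^3)$, which pair with the fourth Gaussian moment $\int \eta^a\eta^b\eta^k\eta^le^{-|\eta|^2/2}\di \eta\propto \delta^{ab}\delta^{kl}+\delta^{ak}\delta^{bl}+\delta^{al}\delta^{bk}$ and hence generate both $\mathrm{Ric}^g$ and $\mathrm{Scal}^gg$, assembling into $\tfrac23G^g$. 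Summing (i)--(iii) yields the claimed coefficient $-e^f\left(\tfrac23G^g-\di f\otimes \di f-\Delta^gfg+\tfrac{|\nabla f|^2}{2}g\right)$. The main obstacle is precisely this fourth-moment bookkeeping: checking that the independent $\mathrm{Ric}$-contributions from the density, from $u_0$ and from $\di_xu_0$ combine with the $\mathrm{Scal}$ carried by $u_1$ into the Einstein tensor with the correct coefficient, while the three $f$-contributions land on the stated weight terms (the consistency of the $-\tfrac12\Delta^gf$ and $+\tfrac14|\nabla f|^2$ in $u_1$ with the $-\Delta^gfg$ and $+\tfrac{|\nabla f|^2}2g$ in the answer is a useful sanity check, both having ratio $-2:1$).

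Finally, the uniform estimate (\ref{eq:weightedBBGconv}), and hence (\ref{eq:asymptoticweightedriemann}), follows from compactness of $M^n$: the Taylor expansions of $\sqrt{\det g}$, of the distance function and of $u_0,u_1$ (as in Lemma \ref{lem:volasym} and Theorem \ref{thm:WeightedExpansion}) have uniformly bounded remainders, the heat-kernel remainder in (\ref{HKexpansion}) is uniform, and the far-diagonal tail is uniformly exponentially small. As an alternative organising device one may instead use Chapman--Kolmogorov in the form $g_{f,t}(x)=[\nabla_x\otimes \nabla_{x'}p_f(x,x',2t)]_{x'=x}$ and differentiate the off-diagonal expansion on the diagonal, which trades the integration over $y$ for the need of the off-diagonal coefficients $u_j(x,x')$ to second order.
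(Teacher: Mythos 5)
Your route is sound but genuinely different from the paper's. You compute $g_{f,t}=\int_{M^n}\di_xp_f\otimes\di_xp_f\,\di \mathrm{vol}^g_f(y)$ directly, localize via the Gaussian bounds and \eqref{eq:esti F}, and evaluate second and fourth Gaussian moments in rescaled normal coordinates --- the original B\'erard--Besson--Gallot computation carried out with the weight. The paper instead takes what you relegate to a closing remark: using the $C^\infty$ eigenfunction expansion it identifies $g_{f,t}$ with the mixed second derivative $(\di_S p_f)(x,x,2t)=(\partial_y\partial_xp_f)(x,x,2t)$ and differentiates the parametrix on the diagonal, so that no $y$-integration and no moment combinatorics are needed; the only inputs are $\partial_xr_x^2=\partial_yr_x^2=0$ and $\di_Sr_x^2=-2g$ at $(x,x)$, $(\di_Su_0)_{(x,x)}=-\tfrac16\mathrm{Ric}^g$ from \eqref{u_0}, and $u_1(x,x)$. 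Your approach is more self-contained but pays for it with the fourth-moment bookkeeping, which you correctly flag as the main unfinished step; the paper's approach collapses exactly that bookkeeping into two diagonal identities. Your structural observations (the exact cancellation $e^{2A}e^{-f(y)}=e^{f(x)}$, the vanishing of the mixed $\di_xr^2\otimes\di f$ term by parity, the normalization fixing $c(n)=4(8\pi)^{n/2}$) all check out.

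One concrete warning. The value $u_1(x,x)=\tfrac{\mathrm{Scal}^g}{6}-\tfrac12\Delta^gf+\tfrac14|\nabla f|^2$ you quote is the one displayed in \eqref{u_1}, but it is inconsistent with the recursion \eqref{u_jrecur}, which gives $u_1(x,x)=\Delta^gu_0(x,x)+\tfrac12\Delta^gf(x)-\tfrac14|\nabla f(x)|^2=\tfrac{\mathrm{Scal}^g}{6}+\tfrac12\Delta^gf-\tfrac14|\nabla f|^2$; this latter value is what the paper actually uses in the proofs of Theorems \ref{thm:WeightedExpansion} and \ref{thm:bbgweighted}. Since your $u_0u_1$ cross term contributes $2t\,u_1(x,x)\,e^{f}g$ to $c(n)t^{(n+2)/2}g_{f,t}$, carrying your quoted sign through your own scheme would yield $-e^f\Delta^gf\,g\,t+e^f\tfrac{|\nabla f|^2}{2}g\,t$ where the stated expansion requires $+e^f\Delta^gf\,g\,t-e^f\tfrac{|\nabla f|^2}{2}g\,t$. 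Your ``ratio $-2:1$'' sanity check is insensitive to precisely this overall sign flip, so it would not catch the discrepancy; you need the corrected diagonal value of $u_1$ for your computation to land on the asserted formula.
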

   
\begin{proof}
By (\ref{DefPullBack}), which remains valid on weighted manifolds because of the characterization (\ref{eq:be}) for being an $\RCD(K, N)$ space, and the fact that the set of eigenfunctions $\{\phi_i\}_{i\geq0}$ forms an orthonormal basis of $L^2(M^n,\mathrm{vol}_f^g)$, we see that for every $x\in M^n$ and $v\in T_x M^n$,
    \begin{equation}\label{eq:ds}
    	         g_{f,t}(v,v)=\sum_{i}e^{-2\lambda_i t}|\di_x\phi_i(v)|^2=(\partial_y \partial_x p_f)_{(x,x,2t)}(v, v)=:(\di_Sp_f)_{(x, x, 2t)}(v, v)
    \end{equation}
where we used a fact that the expansion (\ref{eq:expansion1}) is satisfied in $C^{\infty}(M^n)$ {because of the elliptic estimates} (see {for instance} \cite[Thm.10.3]{Grig}), and we followed the notation in \cite{BerardBessonGallot}, denoting $\di_S:=\partial_y \partial_x$ for the mixed second derivative. {For the reader's convenience, let us clarify the meaning of this along \cite[p. 8]{Tewo}; for any smooth function $h:M^n\times M^n\to \setR$, and fixed $(x,y)$, we define maps $\di_1 h: T_x M^n\times M^n \to \setR$,  $\di_2 h: M^n\times T_y M^n \to \setR$ and  $\di_S h_{(x,y)}: T_x M^n\times T_y M^n\to \setR$ by $\di_1 h(v,y):= (\partial_x h(x,y))\cdot v$,   $\di_2 h(x,w)= (\partial_y h(x,y))\cdot w$ and  $\di_S h(v,w)= \partial_y(\partial_x h(x,y)\cdot v)\cdot w=\di_2(\di_1 h)$, respectively, for all $v\in T_x M^n, w\in T_y M^n$. 
In order to} compute $(\di_S p_f)(x,x,2t)$, put 
{
\begin{equation}\label{eq:U}
U:=(4\pi t)^{n/2} \cdot \exp \left( \frac{\dist^g(x, y)^2}{4t}-A\right)p_f(x, y, t)=\sum_{j=0}^k t^j u_j(x,y)+I_{k+1}(x,y,t),
\end{equation}
where  $I_{k+1}(x, y, t)=O_{f, g}(t^{k+1})=O(t^{k+1})$.

}


Then, we show that for $x,y$ small enough, $\partial_xI_{k+1}=O(t^{k+1}), \partial_y I_{k+1}=O(t^{k+1})$ and $\di_SI_{k+1}=O(t^{k+1})$ hold, where ``$\partial_xI_{k+1}=O(t^{k+1})$'' means $|\partial_xI_{k+1}|=O(t^{k+1})$ (the same applies to $\partial_y I_{k+1}=O(t^{k+1})$), and ``$\di_SI_{k+1}=O(t^{k+1})$'' means $|\di_SI_{k+1}|=O(t^{k+1})$ with respect to the standard norm. 

To this end, fix $k$ and let $l=k+3$, note that $p_f=H_l-H_l*F_l$,  we see that for $x,y$ small enough such that $H_k=S_k$, it holds 
\begin{equation}\label{eq:mixdiffesti}
	I_{k+1}=U-\sum_{j=0}^k t^j u_j=\sum_{j=k+1}^l t^j u_j-(4\pi t)^{n/2} \cdot \exp \left( \frac{\dist^g(x, y)^2}{4t}-A\right)\cdot H_l*F_l,
\end{equation} 
it is clear that $\di_S$ of the first term on the RHS of \eqref{eq:mixdiffesti} is of $O(t^{k+1})$, for the second term, it follows from \eqref{eq:remainderEstiGauss} and
 \eqref{eq:C^l esti} that 
\begin{equation}
	\left\|\di_S \left(\exp \left( \frac{(\dist^g)^2}{4t}-A\right) H_l*F_l\right)\right\|_{L^{\infty}(M^n\times M^n)}\le C t^{k+1-\frac{n}{2}},\qquad \forall t\in(0,1].
\end{equation}
This completes the proof of $\di_SI_{k+1}=O(t^{k+1})$. The estimates $\partial_xI_{k+1}=O(t^{k+1})$, and $\partial_y I_{k+1}=O(t^{k+1})$ can be shown similarly.

Now we continue our computation of the expansion. It holds that
  \begin{equation*}
	 \resizebox{.9\hsize}{!} {$(8\pi t)^{n/2}(\di_S p_f)_{(x,y,2t)}=\left(-\frac{\di_Sr_x^2}{8t}e^AU-\frac{\partial_xr_x^2}{8t}\partial_y(e^AU)+\di_S(e^A U)\right)e^{-r_x^2/(8t)}-\frac{\partial_y r_x^2}{8t}\partial_xp_f$}
  \end{equation*}
where $r_x:=\dist^g(x, \cdot)$. Since at $(x,x)$, $\partial_xr_x^2=\partial_yr_x^2=0$ and $\di_S r_x^2=-2g$ hold in normal coordinates, we have 
    \begin{equation*}
	     (8\pi t)^{n/2}(\di_Sp_f)_{(x,x,2t)}=-\frac{e^{f(x)}U(x,x,2t)}{8t}(\di_Sr_x^2)_{(x,x, 2t)} +\di_S(e^AU)_{(x,x,2t)}
    \end{equation*}
Thanks to (\ref{u_0}) we have 
$(\partial_x u_0)_{(x,x, 2t)}=(\partial_y u_0)_{(x,x)}=0$ and $(\di_S u_0)_{(x,x)}=-\frac{1}{6}\mathrm{Ric}^g(x)$, which imply (recall the convention we use for big-$O$ notation of vectors)
$$
(\partial_x U)_{(x,x,2t)}=(\partial_x u_0)_{(x,x)}+O(t)=O(t).
$$ 
Similarly $(\partial_y U)_{(x,x,2t)}=O(t)$, and 
$$
(\di_S U)_{(x,x,2t)}=(\di_S u_0)_{(x,x)}+O(t)=-\frac{1}{6}\mathrm{Ric}^g(x)+O(t).
$$
It follows that
    \begin{align*}
	   \di_S(e^AU)_{(x,x,2t)}&=\left(U\di_S e^{A}+\partial_xe^{A}\partial_yU+\partial_ye^{A}\partial_x U+e^{A}\di_SU\right)_{(x,x,2t)}\\
	             &=\left(U\di_Se^{A}+e^{A}\di_SU+O(t)\right)_{(x,x,2t)}\\
	             &=\frac{1}{4}e^{f(x)} \di f\otimes \di f-\frac{1}{6}e^{f(x)}\mathrm{Ric}^g+O(t).
    \end{align*}
 This allows us to show that (recall $\di_S r_x^2=-2g$)
\begin{align*}
&(8\pi t)^{n/2}(\di_S p_f)_{(x,x,2t)}\\
&=\frac{1}{4t}e^{f(x)}\left(u_0(x,x)+2tu_1(x,x)+O(t^2)\right)g+\frac{1}{4}e^{f(x)} \di f\otimes \di f-\frac{1}{6}e^{f(x)}\mathrm{Ric}^g+O(t)
\end{align*}
Recall that we have (\ref{u_1}), we finally deduce that 
   \begin{align*}
       4t(8\pi t)^{n/2}(\di_S p_f)_{(x,x,2t)}&=e^{f(x)}\left[1+2t\left(\frac{\mathrm{Scal}^g}{6}+\frac{\Delta^g f}{2}-\frac{|\nabla f|^2}{4}\right)\right]g\\
           &\quad +\frac{1}{2}e^{f(x)}\di f\otimes\di f\cdot 2t-\frac{1}{3}e^{f(x)}\mathrm{Ric}^g\cdot2t+O(t^2)\\
           &=e^{f}g-e^f\left(\frac{2}{3}G^g-\di f\otimes\di f -\Delta^gf g+\frac{|\nabla f|^2}{2}g\right)t +O(t^2)
    \end{align*}
 as claimed.
 \end{proof}

 Based on Theorem \ref{thm:bbgweighted}, let us give the following definitions in order to prove Corollary \ref{cor:weighted}.
 
 \begin{definition}[Weighted Einstein tensor]\label{def:einstein}
     Define {the weighted Einstein tensor} $G_f^g$ { for a closed weighted manifold $(M^n,\dist^g, \vol^g_f)$} by 
    \begin{equation}\label{eq:einsteinweight}
      G_f^g:=e^fG^g-\frac{3e^f}{2}\left(\di f\otimes\di f +\Delta^g fg -\frac{|\nabla f|^2}{2}g\right).
    \end{equation} 
 \end{definition}
 
 \begin{definition}[Weighted adjoint operator $\nabla_f^*$]\label{def:weightedadj}
  For any $T \in C^{\infty}((T^*)^{\otimes 2}M^n)$, define $\nabla^*_fT$ by
    \begin{equation}
       \nabla^*_fT:=\nabla^*T +T(\nabla f, \cdot ),
    \end{equation}
  where $\nabla^*$ is the adjoint operator of the covariant derivative $\nabla$ of $(M^n, g)$, namely $\nabla^*$ coincides with minus the divergence. Moreover we say that $T$ is \textit{divergence free on $(M^n, \dist^g, \mathrm{vol}^g_f)$} if $\nabla_f^*T=0$ holds.
\end{definition}

Note that $\nabla^*_f T$ is characterized by satisfying
    \begin{equation}
       \int_{M^n}\langle \nabla^*_fT, \omega \rangle \di \mathrm{vol}_{f}^g=\int_{M^n}\langle T, \nabla \omega \rangle \di \mathrm{vol}_{f}^g,\quad \forall \omega \in C^{\infty}(T^*M^n),
    \end{equation}
that is, $\nabla^*_f$ is the adjoint operator of the covariant derivative with respect to $\mathrm{vol}_{f}^g$.
Although the next proposition is a direct consequence of Theorem \ref{thm:bbgweighted} with more general results (Theorem \ref{thm:main} and Proposition \ref{prop:equivalence}), we give a direct proof.

\begin{proposition}\label{prop:directproof}
It holds that the weighted Einstein tensor $G_f^g$ is divergence free on $(M^n, \dist^g, \mathrm{vol}^g_f)$ if and only if $f$ is constant.
\end{proposition}

  \begin{proof}
    It is enough to check the ``only if'' part because the other implication reduces to (\ref{eq:divzero}).
    Assume that $\nabla_f^*G_f^g \equiv 0$ holds. Then it is easy to see
      \begin{equation}
         \nabla^*\left(\di f\otimes\di f +\Delta^g fg -\frac{|\nabla f|^2}{2}g\right)\equiv 0
      \end{equation}
    because of (\ref{eq:divzero}).
    Thus we have
       \begin{equation}\label{eq:tobeconstant}
         \Delta^g f\di f +\di \Delta^g f\equiv 0
       \end{equation}
     see also (\ref{eq:adjoint}).
     Let us consider an open subset $U$ of $M^n$:
       \begin{equation}
          U:=\{x \in M^n; \Delta^gf(x) \neq 0\}.
       \end{equation}
It is enough to prove $U = \emptyset$ because then $f$ is harmonic on $(M^n, g)$, thus $f$ is constant.
Assume $U \neq \emptyset$ and take $x \in U$. Define a function $F(z):=e^{f(z)}\Delta^g f(z)$.
Note that $F$ is locally constant on $U$ because 
\begin{equation}
\di F(z)=e^{f(z)}\Delta^gf(z)\di f(z)+e^{f(z)}\di \Delta^gf(z)=-e^{f(z)}\di \Delta^gf(z)+e^{f(z)}\di \Delta^gf(z)=0, 
\end{equation}
where we used (\ref{eq:tobeconstant}) in the second equality.
Let
\begin{equation}
X:=\{z \in M^n; F(z) = F(x)\} \subset U.
\end{equation}
Since $F$ is continuous on $M^n$, $X$ is closed in $M^n$. On the other hand since $F$ is locally constant on $U$, we see that $X$ is an open subset of $M^n$. Thus $X=M^n$. In particular 
    \begin{equation}
      0=\int_{M^n}\Delta^g f\di \mathrm{vol}^g=F(x)\int_Me^{-f}\di \mathrm{vol}^g \neq 0
    \end{equation}
which is a contradiction. Thus we have $U = \emptyset$.
\end{proof}

Finally, in connection with (\ref{eq:ahpt}), let us discuss the asymptotic behavior of 
\begin{equation}
t\mathrm{vol}_{f}^gB_{\sqrt{t}}(x)g_{f, t}.
\end{equation}


\begin{proposition}\label{prop:weightedasymptotics}
We have the following uniform asymptotic expansion as $t \to 0^+$: 
\begin{equation}\label{eq:weightedasymptotic}
\frac{c(n)t}{\omega_n}\mathrm{vol}_{f}^g(B_{\sqrt{t}}(x))g_{f, t}=g-\frac{2t}{3}\left(G_f^g+ \frac{\mathrm{Scal}^g+3\Delta^g f-3|\nabla f|^2}{6(n+2)}g\right) +O(t^2).
\end{equation}
as $t \to 0^+$. In particular if $f$ is constant, then $\mathrm{Scal}^g$ is constant if and only if the second principal term of (\ref{eq:weightedasymptotic}) is divergence free on $(M^n, \dist^g, \mathrm{vol}^g)$, i.e.,
\begin{equation}\label{eq:divzero2}
\nabla^*\left(G^g+\frac{\mathrm{Scal}^g}{6(n+2)}g\right) \equiv 0.
\end{equation} 
\end{proposition}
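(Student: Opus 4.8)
The plan is to read off \eqref{eq:weightedasymptotic} by multiplying the two expansions already at hand: the weighted volume expansion of Lemma~\ref{lem:volasym} and the metric expansion of Theorem~\ref{thm:bbgweighted}. Writing $\sigma:=\mathrm{Scal}^g+3\Delta^g f-3|\nabla f|^2$, Lemma~\ref{lem:volasym} with $r=\sqrt{t}$ gives
\begin{equation*}
\frac{t}{\omega_n}\vol_f^g(B_{\sqrt{t}}(x))\,t^{-(n+2)/2}=e^{-f}\Bigl(1-\frac{\sigma}{6(n+2)}\,t+O(t^{3/2})\Bigr),
\end{equation*}
since the net power of $t$ in the prefactor is $1+\frac n2-\frac{n+2}2=0$. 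Simultaneously Theorem~\ref{thm:bbgweighted} gives $c(n)t^{(n+2)/2}g_{f,t}=e^f g-\frac{2t}{3}G_f^g+O(t^2)$. First I would form the product of these two lines: the factors $e^{-f}$ and $e^{f}$ cancel, and reading off the coefficient of $t$ produces the second principal term, which I would then put into the stated compact form using the definition of $G_f^g$ and the identity $\frac23 G_f^g=e^f\bigl(\frac23 G^g-\di f\otimes\di f-\Delta^g f\,g+\frac{|\nabla f|^2}{2}g\bigr)$.

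The delicate point is the order of the remainder: multiplying a volume expansion with error $O(t^{3/2})$ against the leading $g=O(1)$ would produce only an $O(t^{3/2})$ remainder, whereas \eqref{eq:weightedasymptotic} claims $O(t^2)$. To upgrade this I would revisit the proof of Lemma~\ref{lem:volasym} and note that in geodesic normal coordinates around $x$ the integrand $\sqrt{\det g}\,e^{-f}$ expands so that the coefficient of $t^k$ is homogeneous of degree $k$ in the unit direction $v$; hence every odd power of $t$ integrates to zero over $S^{n-1}$, so the weighted volume has an expansion in even powers of $r$ only, with genuine remainder $O(r^4)=O(t^2)$. The uniform form of \eqref{eq:weightedasymptotic} then follows from the uniform bounds \eqref{eq:unifasym} and \eqref{eq:weightedBBGconv}.

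For the ``in particular'' assertion I would set $f$ constant, so that $\di f\equiv0$, $\Delta^g f\equiv0$, $|\nabla f|^2\equiv0$ and $\sigma=\mathrm{Scal}^g$; the second principal term then reduces to a nonzero multiple of $G^g$ plus a nonzero multiple of $\mathrm{Scal}^g\,g$, so its divergence-freeness is equivalent to that of the tensor $G^g+\frac{\mathrm{Scal}^g}{6(n+2)}g$ in \eqref{eq:divzero2}. Its divergence splits as
\begin{equation*}
\nabla^*\Bigl(G^g+\frac{\mathrm{Scal}^g}{6(n+2)}g\Bigr)=\nabla^*G^g+\frac{1}{6(n+2)}\nabla^*(\mathrm{Scal}^g\,g),
\end{equation*}
where $\nabla^*G^g=0$ by \eqref{eq:divzero}, while $\nabla^*(\phi g)=-\di\phi$ for any function $\phi$, obtained by integrating $\langle\phi g,\nabla\omega\rangle=\phi\,\mathrm{tr}_g\nabla\omega$ by parts against a $1$-form $\omega$. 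Hence the divergence is a nonzero multiple of $\di\,\mathrm{Scal}^g$, which vanishes identically if and only if $\mathrm{Scal}^g$ is constant.

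The main obstacle is organizational rather than conceptual: keeping the parity cancellation explicit so that the remainder is genuinely $O(t^2)$ rather than $O(t^{3/2})$, and carrying out the algebra that rewrites the collected $O(t)$ coefficient as $-\frac23\bigl(G_f^g+\frac{\sigma}{6(n+2)}g\bigr)$. By contrast the divergence characterization is immediate, being insensitive to the precise nonzero constant in front of $\mathrm{Scal}^g\,g$ and requiring only $\nabla^*G^g=0$ and $\nabla^*(\phi g)=-\di\phi$.
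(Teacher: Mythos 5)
Your proposal is correct and follows essentially the same route as the paper, which likewise obtains \eqref{eq:weightedasymptotic} by combining \eqref{eq:asymptoticweightedriemann} with Lemma \ref{lem:volasym} and reduces \eqref{eq:divzero2} to $\di\,\mathrm{Scal}^g\equiv 0$ via $\nabla^*G^g=0$ and $\nabla^*(\phi g)=-\di\phi$. Your parity observation upgrading the volume remainder from $O(r^3)$ to $O(r^4)$ is a legitimate sharpening that the paper leaves implicit, and is indeed needed for the stated $O(t^2)$ error.
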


\begin{proof}
The desired uniform convergence (\ref{eq:weightedasymptotic}) comes from (\ref{eq:asymptoticweightedriemann}) with Lemma \ref{lem:volasym}.
For the remaining statement, we assume that $f$ is constant. Then thanks to (\ref{eq:divzero}), we have
\begin{equation}
\nabla^*\left( G^g+ \frac{\mathrm{Scal}^g}{6(n+2)}g\right)=0 \Longleftrightarrow \nabla^*(\mathrm{Scal}^gg)=0 \Longleftrightarrow \di \mathrm{Scal}^g=0
\end{equation}
which proves the desired equivalence, where we used $\nabla^*g=0$.


\end{proof}
It is an immediate consequence of Proposition \ref{prop:weightedasymptotics} that for a given compact non-collapsed $\RCD(K, N)$ space $(X, \dist, \mathcal{H}^N)$, it is hard to check directly the weakly asymptotically divergence free property of the second principal term of $t\mathcal{H}^n(B_{\sqrt{t}}(x))g_t$. 


\section{Second principal term in $\RCD$ case; proof of Theorem \ref{thm:main}}\label{sec:4}
The main purpose of this section is to prove Theorem \ref{thm:main}. For that let us fix the terminology borrowed from \cite{Gigli} minimally.
\subsection{Second order differential calculus; list of differential operators}
Throughout this subsection we fix an $\RCD(K, \infty)$ space $(X, \dist, \meas)$. The space of all test functions due to \cite{Gigli, Savare} is defined by
\begin{equation}
\mathrm{Test}F(X, \dist, \meas):=\left\{ f \in \mathrm{Lip}_b(X, \dist) \cap D(\Delta); \Delta f \in H^{1, 2}(X, \dist, \meas)\right\}
\end{equation}
which is an algebra.
We first recall the Hessian for a test function (see also \cite[Def.3.3.1]{Gigli}).
\begin{theorem}[Hessian]
For any $f \in \mathrm{Test}F(X, \dist, \meas)$, there exists $T \in L^2((T^*)^{\otimes 2}(X, \dist, \meas))$ such that for any $f_i \in \mathrm{Test}F(X, \dist, \meas)$, $i=1,2$,
\begin{equation}\label{eq:hess}
T(\nabla f_1, \nabla f_2)=\frac{1}{2}\left(\langle \nabla f_1, \nabla \langle \nabla f_2, \nabla f\rangle \rangle + \langle \nabla f_2, \nabla \langle \nabla f_1, \nabla f \rangle \rangle -\langle f, \nabla \langle \nabla f_1, \nabla f_2\rangle \rangle \right)
\end{equation}
holds for $\meas$-a.e. $x \in X$. Since $T$ is unique, we denote it by $\mathrm{Hess}_f$ and call it the \textit{Hessian} of $f$.
\end{theorem}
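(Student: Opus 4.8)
The plan is to read the right-hand side of \eqref{eq:hess} as a Koszul-type formula and to promote it from a merely $L^1$-valued bilinear expression, defined only on the generating set $\{\nabla f_i : f_i \in \mathrm{Test}F(X, \dist, \meas)\}$, to an honest $L^2$ section of $(T^*)^{\otimes 2}(X, \dist, \meas)$. Writing $B_f(\nabla f_1, \nabla f_2)$ for the right-hand side, the first task is purely algebraic: symmetry in $(f_1, f_2)$ is manifest, and the essential point is $L^\infty$-bilinearity, namely that replacing $\nabla f_1$ by $h\nabla f_1$ for a test function $h$ multiplies the expression by $h$ up to terms that cancel. This follows by expanding the inner gradients with the Leibniz rule and invoking the locality on Borel sets of $\langle \nabla\,\cdot\,, \nabla\,\cdot\,\rangle$. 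Since finite sums $\sum_j h_j \nabla f_{1,j}$ with $h_j, f_{1,j} \in \mathrm{Test}F(X, \dist, \meas)$ are dense in the $L^2$-cotangent module $L^2(T^*(X, \dist, \meas))$, this shows that $B_f$ descends to a well-defined pointwise symmetric bilinear form, i.e.\ a measurable section $T$ of $(T^*)^{\otimes 2}(X, \dist, \meas)$.

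The analytic heart is the $L^2$ bound $\int_X |T|^2 \di \meas < \infty$, and this is exactly where the $\RCD(K, N)$ hypothesis enters through the Bochner inequality. The inequality stated in the axioms controls only the trace term $(\Delta f)^2/N$, so I would run the self-improvement argument of Bakry--\'Emery and Savar\'e inside the nonsmooth $\Gamma_2$-calculus: using that $\mathrm{Test}F(X, \dist, \meas)$ is an algebra stable under the heat flow, one upgrades the Bochner inequality to the Hessian form
\begin{equation*}
\frac{1}{2}\int_X |\nabla f|^2 \Delta \phi \di \meas \ge \int_X \phi\left( |T|^2 + \langle \nabla \Delta f, \nabla f\rangle + K|\nabla f|^2\right)\di \meas,
\end{equation*}
valid for suitable test functions $\phi \ge 0$. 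Choosing $\phi$ approximating the constant $1$, which is legitimate since $X$ is compact and $\meas$ is finite, and integrating the middle term by parts yields
\begin{equation*}
\int_X |T|^2 \di \meas \le \int_X \left((\Delta f)^2 - K |\nabla f|^2\right)\di \meas < \infty,
\end{equation*}
certifying that $T \in L^2((T^*)^{\otimes 2}(X, \dist, \meas))$.

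Uniqueness is then immediate: two $L^2$ tensors that agree when tested against all pairs $(\nabla f_1, \nabla f_2)$ with $f_i \in \mathrm{Test}F(X, \dist, \meas)$ must coincide $\meas$-a.e., again because such gradients generate $L^2(T^*(X, \dist, \meas))$ as a module. I expect the self-improvement step to be the main obstacle: the naive Bochner inequality bounds only the squared trace of the Hessian, and extracting the full pointwise norm $|T|^2$ requires the tensorization trick, applied carefully so that the self-improved object is genuinely identified with the bilinear form $B_f$ built from \eqref{eq:hess} rather than merely with its trace $\Delta f$.
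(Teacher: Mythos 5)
First, a remark on the comparison: the paper does not prove this statement at all --- it is quoted verbatim from Gigli's memoir and the ``proof'' is the citation \cite[Thm.3.3.8]{Gigli}. So your proposal can only be measured against the argument in that reference, whose broad strategy (Koszul formula, tensoriality, self-improved Bochner inequality, density of gradients for uniqueness) you have correctly identified.

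There is, however, a genuine gap in the logical order of your argument, and it is precisely at the point you flag as ``the main obstacle.'' You first construct $T$ as a measurable section from $L^\infty$-bilinearity and locality of $B_f$ on the submodule generated by gradients of test functions, and only afterwards bound $\int_X|T|^2\di\meas$ by inserting $|T|^2$ into the self-improved Bochner inequality. This is circular: the inequality with the term $|T|^2$ presupposes that $T$ already exists as an object whose pointwise Hilbert--Schmidt norm is (locally) square integrable, which is exactly what you are trying to prove. Moreover, the construction of $T$ from bilinearity alone is not legitimate here: the theorem is stated on an $\RCD(K,\infty)$ space (see the opening line of that subsection), where the fibers of the cotangent module may be infinite dimensional, and a bilinear form defined on a dense subspace of an infinite-dimensional fiber need not extend boundedly --- so without an a priori quantitative bound you do not even get a measurable section with finite pointwise norm. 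The correct resolution, which is the heart of Gigli's proof (his Lemma 3.3.7, building on Savar\'e's self-improvement), is to prove \emph{directly} the $\meas$-a.e.\ estimate
\begin{equation*}
\Bigl|\sum_i B_f(\nabla g_i,\nabla h_i)\Bigr|\le \sqrt{A}\,\Bigl|\sum_i \nabla g_i\otimes\nabla h_i\Bigr|_{HS},
\qquad \int_X A\di\meas\le\int_X\bigl((\Delta f)^2-K|\nabla f|^2\bigr)\di\meas,
\end{equation*}
for \emph{arbitrary finite families} of test functions $(g_i,h_i)$; existence of $T\in L^2((T^*)^{\otimes 2}(X,\dist,\meas))$ and the $L^2$ bound then follow \emph{simultaneously} by a duality/Riesz argument in the Hilbert module. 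A secondary point in the same vein: even writing down the right-hand side of \eqref{eq:hess} requires knowing that $\langle\nabla f_1,\nabla f_2\rangle\in H^{1,2}(X,\dist,\meas)$ for test functions, which is itself a consequence of Savar\'e's self-improvement and should be stated as an input rather than taken for granted. Your uniqueness argument via density of $\{\sum_j h_j\di f_j\}$ is correct as stated.
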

See \cite[Thm.3.3.2 and 3.3.8]{Gigli}. {For the reader's convenience}, let us provide a proof of the uniqueness. First let us recall that the space of all test tensor fields of type $(0, 2)$;
\begin{equation}\label{eq:testtensor}
\mathrm{Test}(T^*)^{\otimes 2}(X, \dist, \meas):=\left\{ \sum_{i=1}^lf_{i, 0}\di f_{i, 1} \otimes \di f_{i, 2}; l \in \mathbb{N}, f_{i, j} \in \mathrm{Test}F(X, \dist, \meas)\right\}
\end{equation}
is dense in $L^2((T^*)^{\otimes 2}(X, \dist, \meas))$ (see \cite[(3.2.7)]{Gigli}). For all $T_1, T_2 \in L^2((T^*)^{\otimes 2}(X, \dist, \meas))$ satisfying (\ref{eq:hess}) as $T=T_i$, we have 
\begin{equation}\label{eq:uniq}
\int_X\langle T_1-T_2, S\rangle \di \meas =0,\quad \forall S \in \mathrm{Test}(T^*)^{\otimes 2}(X, \dist, \meas).
\end{equation}
The density of (\ref{eq:testtensor}) in $L^2((T^*)^{\otimes 2}(X, \dist, \meas))$ allows us to take $S$ as $T_1-T_2$ in (\ref{eq:uniq}), thus we conclude the proof. Note that the uniquness appeared below can be checked similarly by the density of test objects in $L^2$. 

Moreover it is proved in \cite[Cor.3.3.9]{Gigli} that the Hessian is well-defined for any $f \in D(\Delta)$ satisfying (\ref{eq:hess}) and the Bochner inequality involving the Hessian term :
\begin{equation}\label{eq:bochner}
\frac{1}{2}\int_X|\nabla f|^2\Delta \phi \di \meas \ge \int_X\phi\left( |\mathrm{Hess}_f|^2+\langle \nabla \Delta f, \nabla f\rangle +K|\nabla f|^2\right)\di \meas
\end{equation}
holds for all $f, \phi \in D(\Delta)$ with $\phi \ge 0$ and $\phi, \Delta \phi \in L^{\infty}(X, \meas)$. In particular we have
\begin{equation}\label{eq:hessbound}
\int_X|\mathrm{Hess}_f|^2\di \meas \le \int_X\left((\Delta f)^2-K|\nabla f|^2\right)\di \meas, \quad \forall f \in D(\Delta).
\end{equation}
\begin{definition}[Adjoint operator $\delta$]\label{def:adj} 
 Let us denote by $D(\delta)$ the set of $\omega \in L^2(T^*(X, \dist, \meas))$ such that there exists $f \in L^2(X, \meas)$ such that 
\begin{equation}
\int_X\langle \omega, \di h \rangle \di \meas= \int_Xfh\di \meas, \quad \forall h \in H^{1, 2}(X, \dist, \meas)
\end{equation}
holds. Since $f$ is unique, we denote it by $\delta\omega$.
\end{definition}
See also \cite[Def.3.5.11]{Gigli}.
Let us define the space of test $1$-forms:
\begin{equation}
\mathrm{Test}T^*(X, \dist, \meas):=\left\{\sum_{i=1}^lf_{0, i}\di f_{1, i}; l\in \mathbb{N}, f_{j, i} \in \mathrm{Test}F(X, \dist, \meas)\right\}.
\end{equation}
It is proved in \cite[Prop.3.5.12]{Gigli} that $\mathrm{Test}T^*(X, \dist, \meas) \subset D(\delta)$ holds with \begin{equation}\label{eq;delta}
\delta(f_1\di f_2)=-\langle \nabla f_1, \nabla f_2 \rangle-f_1\Delta f_2, \quad \forall f_i \in \mathrm{Test}F(X, \dist, \meas).
\end{equation}
\begin{definition}[Sobolev space $W^{1, 2}_C$]\label{def:cov}
Let us denote by $W^{1, 2}_C(T^*(X, \dist, \meas))$ the set of all $\omega \in L^2(T^*(X, \dist, \meas))$ such that there exists $T \in L^2((T^*)^{\otimes 2}(X, \dist, \meas))$ such that 
\begin{equation}
\int_X\langle T, f_0\di f_1\otimes \di f_2\rangle \di \meas = \int_X\left( -\langle \omega, \di f_2\rangle \delta(f_0\di f_1)-f_0\langle \mathrm{Hess}_{f_2}, \omega \otimes \di f_1\rangle \right) \di \meas
\end{equation}
holds. Since $T$ is unique, we denote it by $\nabla \omega$.
\end{definition}
See also \cite[Def.3.4.1]{Gigli}.
Comparing our working definition above for $W^{1,2}_C$-$1$-forms with Gigli's one for $W^{1,2}_C$-vector fields \cite[Def.3.4.1]{Gigli}, it is easy to see that for any $\omega \in L^2(T^*(X, \dist, \meas))$, $\omega \in W^{1, 2}_C(T^*(X, \dist, \meas)$ holds if and only if $\omega^{\sharp} \in W^{1, 2}_C(T(X, \dist, \meas))$ holds, where we used the canoncal musical isomorphism $L^2(T^*(X, \dist, \meas)) \simeq L^2(T(X, \dist, \meas))$.
It is proved in \cite[Thm.3.4.2]{Gigli} that $\mathrm{Test}T^*(X, \dist, \meas) \subset W^{1, 2}_C(T^*(X, \dist, \meas))$ holds with
\begin{equation}\label{eq:leib}
\nabla (f_1\di f_2)=\di f_1\otimes \di f_2 +f_1 \mathrm{Hess}_{f_2}, \quad \forall f_i \in \mathrm{Test}F(X, \dist, \meas).
\end{equation}
\begin{definition}[Sobolev space $H^{1, 2}_C$]\label{def:cov2}
Let us denote by $H^{1, 2}_C(T^*(X, \dist, \meas))$ the closure of $\mathrm{Test}T^*(X, \dist, \meas)$ in $W^{1, 2}_C(T^*(X, \dist, \meas))$.
\end{definition}
See also \cite[Def.3.4.3]{Gigli}.
\begin{definition}[Exterior derivative $\di$]\label{def:ext}
Let us denote by $W^{1, 2}_{\di}(T^*(X, \dist, \meas))$ the set of all $\omega \in L^2(T^*(X, \dist, \meas))$ such that there exists $\eta \in L^2(\bigwedge^2T^*(X, \dist, \meas))$ such that 
\begin{equation}
\int_X\langle \eta, \alpha_0\otimes \alpha_1\rangle\di \meas=\int_X\left(\langle \omega, \alpha_0\rangle \delta\alpha_1-\langle \omega, \alpha_1\rangle \delta \alpha_0\right)\di \meas, \quad \forall \alpha \in \mathrm{Test}T^*(X, \dist, \meas)
\end{equation}
holds. Since $\eta$ is unique, we denote it by $\di \omega$. 
\end{definition}
See also \cite[Def.3.5.1]{Gigli}.
It is proved in \cite[Thm.3.5.2]{Gigli} that $\mathrm{Test}T^*(X, \dist, \meas) \subset W^{1, 2}_{\di}(T^*(X, \dist, \meas))$ holds.
\begin{definition}[Sobolev space $H^{1, 2}_H$]\label{def:sob2}
Let us denote by $H^{1, 2}_H(T^*(X, \dist, \meas))$ the completion of $\mathrm{Test}T^*(X, \dist, \meas)$ with respect to the norm:
\begin{equation}
\|\omega\|_{H^{1, 2}_H}^2:=\|\omega\|_{L^2}^2+\|\delta \omega\|_{L^2}^2+\|\di \omega\|_{L^2}^2.
\end{equation}
\end{definition}
See also \cite[Def.3.5.13]{Gigli}
\begin{definition}[Hodge Laplacian $\Delta_{H, 1}$]\label{def:hodge}
Let us denote by $D(\Delta_{H, 1})$ the set of all $\omega \in H^{1, 2}_H(T^*(X, \dist, \meas))$ such that there exists $\eta \in L^2(T^*(X, \dist, \meas))$ such that 
\begin{equation}
\int_X\left( \langle \di \omega, \di \alpha \rangle +\delta \omega \cdot \delta \alpha \right)\di \meas=\int_X\langle \eta, \alpha \rangle \di \meas, \quad \forall \alpha \in H^{1, 2}_H(T^*(X, \dist, \meas))
\end{equation}
holds. Since $\eta$ is unique, we denote it by $\Delta_{H, 1}\omega$.
\end{definition}
See also \cite[Def.3.5.14]{Gigli}.
It is proved in \cite[Cor.3.6.4]{Gigli} that $H^{1, 2}_H(T^*(X, \dist, \meas)) \subset H^{1, 2}_C(T^*(X, \dist, \meas))$ holds with 
\begin{equation}\label{eq:boch}
\int_X|\nabla \omega|^2\di \meas \le \int_X( |\di \omega|^2+|\delta \omega|^2-K|\omega|^2) \di \meas, \quad \forall \omega \in H^{1, 2}_H(T^*(X, \dist, \meas)).
\end{equation}
On the other hand it follows from Definitions \ref{def:cov} and \ref{def:ext} that for any $\omega \in H^{1, 2}_C(T^*(X, \dist, \meas))$, 
\begin{equation}\label{eq:ext}
\di \omega (V_1, V_2)=(\nabla_{V_1}\omega) (V_2) -(\nabla_{V_2}\omega) (V_1), \quad \forall V_i \in L^{\infty}(T(X, \dist,  \meas))
\end{equation}
holds, where $\nabla_{V_1}\omega:=\nabla \omega ( \cdot, V_1)$. In particular, we see that $H^{1, 2}_C(T^*(X, \dist, \meas))$ is a subset of $H^{1, 2}_{\di}(T^*(X, \dist, \meas))$, where $H^{1, 2}_{\di}(T^*(X, \dist, \meas))$ denotes the $W^{1, 2}_{\di}$-closure of $\mathrm{Test}T^*(X, \dist, \meas)$, with
\begin{equation}\label{eq:point}
|\di \omega|^2 \le 2|\nabla \omega|^2, \quad \meas-a.e. \,x \in X
\end{equation}
for any $\omega \in H^{1, 2}_C(T^*(X, \dist, \meas))$.
\begin{definition}[Adjoint operator $\nabla^*$]
Let us denote by $D(\nabla^*)$ the set of all $T \in L^2((T^*)^{\otimes 2}(X, \dist, \meas))$ such that there exists $\eta \in L^2(T^*(X, \dist, \meas))$ such that 
\begin{equation}
\int_X\langle T, \nabla \omega \rangle \di \meas =-\int_X\langle \eta, \omega \rangle \di \meas, \quad \forall \omega \in H^{1, 2}_C(T^*(X, \dist, \meas))
\end{equation}
holds. Since $\eta$ is unique, we denote it by $\nabla^*T$. We say $T$ is \textit{divergence free} if $\nabla^*T=0$ holds.
\end{definition}
See also \cite[Def. 2.17]{Honda20}.
Note that for any $f \in \mathrm{Test}F(X, \dist, \meas)$ we have $\di f \otimes \di f \in D(\nabla^*)$ with
\begin{equation}\label{eq:adjoint}
\nabla^*(\di f \otimes \di f)=-\Delta f\di f-\frac{1}{2}\di |\nabla f|^2.
\end{equation} 
See for instance \cite[Prop.2.18]{Honda20} for the proof. 
Finally let us recall the following result proved in \cite[Prop.3.2]{Han} in the finite dimensional (maximal) case. Note that for any tensor $T$ of type $(0, 2)$ on $X$, the trace $\mathrm{tr}(T)$ is defined by $\mathrm{tr}(T):=\langle T, g\rangle$.
\begin{theorem}[Laplacian is trace of Hessian under maximal dimension]\label{thm:hanresult}
Assume that $N$ is an integer with $\mathrm{dim}_{\dist, \meas}(X)=N$. Then
for all $f \in D(\Delta)$ we see that 
\begin{equation}\label{eq:laptrace}
\Delta f=\mathrm{tr}(\mathrm{Hess}_f) \quad \text{for $\meas$-a.e. $x \in X$}.
\end{equation}
\end{theorem}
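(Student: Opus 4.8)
The plan is to establish (\ref{eq:laptrace}) first for test functions $f\in\mathrm{Test}F(X,\dist,\meas)$ and then remove this restriction by approximation. For the reduction, given $f\in D(\Delta)$ a standard heat-flow mollification produces $f_k\in\mathrm{Test}F(X,\dist,\meas)$ with $f_k\to f$ and $\Delta f_k\to\Delta f$ in $L^2(X,\meas)$; applying the Hessian bound (\ref{eq:hessbound}) to $f_k-f_j$ shows that $\mathrm{Hess}_{f_k}\to\mathrm{Hess}_f$ in $L^2((T^*)^{\otimes 2}(X,\dist,\meas))$. Since $\dim_{\dist,\meas}(X)=N$ forces $\mathrm{tr}(g)=\langle g,g\rangle=N$ $\meas$-a.e., the trace is a bounded operator ($|\mathrm{tr}\,T|\le\sqrt N\,|T|$ pointwise), so $\mathrm{tr}\,\mathrm{Hess}_{f_k}\to\mathrm{tr}\,\mathrm{Hess}_f$ in $L^2(X,\meas)$ and the identity passes to the limit. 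It therefore suffices to treat $f\in\mathrm{Test}F(X,\dist,\meas)$.

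The heart of the argument is a dimensional self-improvement of the two Bochner inequalities available to us, namely the Hessian form (\ref{eq:bochner}) and the $N$-Bochner inequality built into the $\RCD(K,N)$ condition. Writing $n:=\dim_{\dist,\meas}(X)$, I would establish that for every $f\in\mathrm{Test}F(X,\dist,\meas)$ and every nonnegative $\phi\in D(\Delta)\cap L^\infty(X,\meas)$ with $\Delta\phi\in L^\infty(X,\meas)$,
\[
\int_X\phi\,\bigl(\Delta f-\mathrm{tr}\,\mathrm{Hess}_f\bigr)^2\di\meas\le(N-n)\left[\frac12\int_X|\nabla f|^2\Delta\phi\di\meas-\int_X\phi\Bigl(|\mathrm{Hess}_f|^2+\langle\nabla\Delta f,\nabla f\rangle+K|\nabla f|^2\Bigr)\di\meas\right].
\]
The bracket is nonnegative precisely by (\ref{eq:bochner}). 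This is the $\RCD(K,N)$ refinement of Bochner that simultaneously records the full Hessian and the gap between $\Delta f$ and $\mathrm{tr}\,\mathrm{Hess}_f$; it is obtained by a Bakry-type self-improvement, decomposing $\mathrm{Hess}_f$ pointwise $\meas$-a.e. into its trace part $\tfrac1n(\mathrm{tr}\,\mathrm{Hess}_f)\,g$ and its trace-free part on the $n$-dimensional cotangent module (of dimension $n$ $\meas$-a.e. by (\ref{eq:regular set is full})), and feeding suitable combinations of test functions into the $N$-Bochner inequality so as to separate these two contributions. The pointwise Cauchy--Schwarz inequality $(\mathrm{tr}\,\mathrm{Hess}_f)^2\le n\,|\mathrm{Hess}_f|^2$, together with its rigidity, is what produces the coefficient $N-n$.

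Granting this inequality the conclusion is immediate: the hypothesis that $N$ is an integer with $\dim_{\dist,\meas}(X)=N$ forces $n=N$, so the factor $N-n$ on the right-hand side vanishes and $\int_X\phi\,(\Delta f-\mathrm{tr}\,\mathrm{Hess}_f)^2\di\meas\le 0$ for every admissible $\phi\ge 0$. As such $\phi$ are plentiful enough to localize on arbitrary Borel sets, the nonnegative integrand must vanish, i.e. $\Delta f=\mathrm{tr}\,\mathrm{Hess}_f$ $\meas$-a.e.\ for test $f$, and the approximation above finishes the proof. I expect the main obstacle to be the self-improved inequality itself: extracting the defect term $(\Delta f-\mathrm{tr}\,\mathrm{Hess}_f)^2/(N-n)$ genuinely requires the exact pointwise dimension of the tangent module together with the Bakry--Émery diagonalization, since neither (\ref{eq:bochner}) nor the plain $N$-Bochner inequality, taken in isolation, detects the trace of the Hessian.
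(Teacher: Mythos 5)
Your proposal is, in substance, a reconstruction of the proof of the result that the paper cites rather than proves: Theorem \ref{thm:hanresult} is recalled from \cite[Prop.3.2]{Han}, and the inequality you propose to establish,
\[
\int_X\phi\,\bigl(\Delta f-\tr\mathrm{Hess}_f\bigr)^2\di\meas\le(N-n)\Bigl[\tfrac12\int_X|\nabla f|^2\Delta\phi\di\meas-\int_X\phi\bigl(|\mathrm{Hess}_f|^2+\langle\nabla\Delta f,\nabla f\rangle+K|\nabla f|^2\bigr)\di\meas\Bigr],
\]
is precisely Han's dimensional self-improvement of the Bochner inequality, which is the entire content of that citation (the theorem follows from it in two lines, exactly as you observe). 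Your outer architecture is sound: the reduction from $D(\Delta)$ to $\mathrm{Test}F(X,\dist,\meas)$ via heat-flow mollification together with (\ref{eq:hessbound}) applied to differences is correct, and the pointwise bound $|\tr T|\le\sqrt{n}\,|T|$ (from $|g|^2=n$ $\meas$-a.e.) legitimately passes the identity to the limit. The gap is that the displayed inequality is only asserted; the sketch you give for it (split $\mathrm{Hess}_f$ into trace and trace-free parts and feed combinations of test functions into the $N$-Bochner inequality) names the right ingredients but is exactly where all the work lies, and carrying it out requires the full Bakry--\'Emery self-improvement on the tangent module developed in \cite{Han}. So as written you have reduced the theorem to a strictly stronger unproven statement.

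Note also that the paper sketches a genuinely different, self-contained proof in the compact case, which you may prefer: Theorem \ref{thm:pullbacklap} gives
\[
\int_Xg_t(\nabla f,\nabla\phi)\di\meas=-\int_X\phi\Bigl(\langle g_t,\mathrm{Hess}_f\rangle+\tfrac14\langle\nabla f,\nabla_x\Delta_xp(x,x,2t)\rangle\Bigr)\di\meas,
\]
and, once one knows $\meas=\mathcal{H}^N$ (equivalent to $\dim_{\dist,\meas}(X)=N$ in the compact case by \cite[Cor.1.3]{Honda19}), multiplying both sides by $c(N)t^{(N+2)/2}$ and letting $t\to0^+$ turns $c(N)t^{(N+2)/2}g_t$ into $g$ via (\ref{eq:firstpricipal}) and kills the heat-kernel correction term, yielding (\ref{eq:alternativeproof}) and hence $\Delta f=\tr(\mathrm{Hess}_f)$ by integration by parts. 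That route avoids the self-improved Bochner inequality entirely and is much shorter given the tools of Section \ref{sec:4}.
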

Compare with (\ref{eq:witten}). We can also reprove (\ref{eq:laptrace}) along the main tools in the paper when $(X, \dist)$ is compact, see (\ref{eq:alternativeproof}).
\subsection{A key formula}
Throughout this subsection let us fix a compact $\RCD(K, N)$ space $(X, \dist, \meas)$.
\begin{theorem}[Laplacian of $(X, g_t, \meas)$]\label{thm:pullbacklap}
For any $f \in D(\Delta)$ and any $\phi \in H^{1, 2}(X, \dist, \meas) \cap L^{\infty}(X, \meas)$, we have
\begin{equation}\label{eq:lapt}
\int_Xg_t(\nabla f, \nabla \phi)\di \meas=-\int_X\phi\Delta^tf\di \meas,
\end{equation}
where 
\begin{equation}
\Delta^t f=\langle g_t, \mathrm{Hess}_f\rangle +\frac{1}{4}\langle \nabla f, \nabla_x \Delta_x p(x, x, 2t)\rangle \in L^1(X, \meas).
\end{equation}
\end{theorem}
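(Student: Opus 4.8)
The plan is to expand $g_t(\nabla f,\nabla\phi)$ in the orthonormal eigenbasis $\{\phi_i\}$ and integrate by parts one eigenfunction at a time, first under the extra assumption $f\in\mathrm{Test}F(X,\dist,\meas)$ and then removing it by spectral approximation. Since $g_t=\sum_i e^{-2\lambda_i t}\di\phi_i\otimes\di\phi_i$ in $L^\infty((T^*)^{\otimes2}(X,\dist,\meas))$ by \eqref{DefPullBack}, while $\nabla f\in L^\infty$ (test functions are Lipschitz) and $\nabla\phi\in L^2$, the partial sums $g_t^{(M)}(\nabla f,\nabla\phi)=\sum_{i\le M}e^{-2\lambda_i t}\langle\nabla\phi_i,\nabla f\rangle\langle\nabla\phi_i,\nabla\phi\rangle$ converge to $g_t(\nabla f,\nabla\phi)$ in $L^1(X,\meas)$. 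Hence it suffices to integrate by parts each summand and then let $M\to\infty$.

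Fix $i$ and set $u_i:=\langle\nabla\phi_i,\nabla f\rangle$. For $f,\phi_i\in\mathrm{Test}F(X,\dist,\meas)$ the self-improvement of the calculus in \cite{Gigli,Savare} gives $u_i\in H^{1,2}(X,\dist,\meas)\cap L^\infty(X,\meas)$ with $\nabla u_i=\mathrm{Hess}_{\phi_i}(\nabla f,\cdot)+\mathrm{Hess}_f(\nabla\phi_i,\cdot)$. Since $\di\phi_i\in D(\delta)$ with $\delta\di\phi_i=-\Delta\phi_i=\lambda_i\phi_i$ by \eqref{eq;delta}, the Leibniz rule for $\delta$ yields $u_i\di\phi_i\in D(\delta)$ with
\[
\delta(u_i\di\phi_i)=\lambda_i\phi_i u_i-\langle\nabla u_i,\nabla\phi_i\rangle=\lambda_i\phi_i\langle\nabla\phi_i,\nabla f\rangle-\mathrm{Hess}_{\phi_i}(\nabla f,\nabla\phi_i)-\mathrm{Hess}_f(\nabla\phi_i,\nabla\phi_i).
\]
Testing the definition of $\delta$ against $\phi\in H^{1,2}(X,\dist,\meas)$ then gives $\int_X\langle\nabla\phi_i,\nabla f\rangle\langle\nabla\phi_i,\nabla\phi\rangle\di\meas=\int_X\phi\,\delta(u_i\di\phi_i)\di\meas$, so that each summand has been rewritten with all derivatives removed from $\phi$.

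The crux is then to recognise the ``Hessian plus eigenvalue'' combination as a pure second derivative of $\phi_i^2$. Using $\mathrm{Hess}_{\phi_i}(\nabla\phi_i,\cdot)=\tfrac12\nabla|\nabla\phi_i|^2$, the eigenfunction equation $\Delta\phi_i=-\lambda_i\phi_i$, and $\Delta(\phi_i^2)=2\phi_i\Delta\phi_i+2|\nabla\phi_i|^2$, one obtains the pointwise identity $\tfrac12\nabla|\nabla\phi_i|^2-\lambda_i\phi_i\nabla\phi_i=\tfrac14\nabla\Delta(\phi_i^2)$. Multiplying the integration-by-parts identity by $e^{-2\lambda_i t}$ and summing, the terms carrying $\mathrm{Hess}_f$ reassemble into $\langle g_t,\mathrm{Hess}_f\rangle$, while the remaining terms give $\tfrac14\big\langle\nabla f,\nabla\Delta\sum_i e^{-2\lambda_i t}\phi_i^2\big\rangle=\tfrac14\langle\nabla f,\nabla_x\Delta_x p(x,x,2t)\rangle$, where I use the diagonal expansion $p(x,x,2t)=\sum_i e^{-2\lambda_i t}\phi_i(x)^2$ from \eqref{eq:expansion1} (here $\nabla_x\Delta_x p(x,x,2t)$ is read as the gradient of $\Delta$ applied to the diagonal function $x\mapsto p(x,x,2t)$). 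This is exactly $-\int_X\phi\,\Delta^t f\di\meas$ with the claimed $\Delta^t f$, and $\Delta^t f\in L^1$ follows from $g_t\in L^\infty$, $\mathrm{Hess}_f\in L^2$, $\nabla f\in L^2$, $\nabla_x\Delta_x p(\cdot,\cdot,2t)\in L^2$. Finally, for general $f\in D(\Delta)$ I approximate by the truncations $f_j=\sum_{i\le j}\langle f,\phi_i\rangle_{L^2}\phi_i\in\mathrm{Test}F(X,\dist,\meas)$, which satisfy $f_j\to f$ and $\Delta f_j\to\Delta f$ in $L^2$; by the Hessian bound \eqref{eq:hessbound} this forces $\mathrm{Hess}_{f_j}\to\mathrm{Hess}_f$ and $\nabla f_j\to\nabla f$ in $L^2$, so passing to the limit in the identity for the $f_j$ yields \eqref{eq:lapt}.

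The main obstacle is the analytic bookkeeping of the infinite sum: justifying the interchange of $\sum_i$ with integration, with $\delta$, $\nabla$ and $\Delta$, i.e.\ the $L^2$-convergence of $\sum_i e^{-2\lambda_i t}\nabla\Delta(\phi_i^2)$ to $\nabla_x\Delta_x p(x,x,2t)$, together with the fact that $x\mapsto p(x,x,2t)$ lies in $D(\Delta)$ with $\Delta$ of it in $H^{1,2}(X,\dist,\meas)$. For fixed $t>0$ all of this is controlled by testing the exponential weight $e^{-2\lambda_i t}$ against the uniform estimates \eqref{eq:eigenest} on $\|\phi_i\|_{L^\infty}$, $\|\nabla\phi_i\|_{L^\infty}$ and the Weyl-type lower bound $\lambda_i\gtrsim i^{2/N}$. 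The second delicate input is the structural fact from \cite{Gigli} that $\langle\nabla\phi_i,\nabla f\rangle\in H^{1,2}(X,\dist,\meas)$ with the stated gradient and that $\langle\nabla\phi_i,\nabla f\rangle\,\di\phi_i\in D(\delta)$, which is precisely what makes this one-form eligible for the integration by parts above.
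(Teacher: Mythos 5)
Your argument is correct: the term-by-term integration by parts
\begin{equation*}
\int_X\langle\nabla\phi_i,\nabla f\rangle\langle\nabla\phi_i,\nabla\phi\rangle\di\meas=\int_X\phi\bigl(\lambda_i\phi_i\langle\nabla\phi_i,\nabla f\rangle-\mathrm{Hess}_{\phi_i}(\nabla f,\nabla\phi_i)-\mathrm{Hess}_f(\nabla\phi_i,\nabla\phi_i)\bigr)\di\meas,
\end{equation*}
the identification of the first two terms with $-\tfrac14\langle\nabla f,\nabla\Delta(\phi_i^2)\rangle$ via $\mathrm{Hess}_{\phi_i}(\nabla\phi_i,\cdot)=\tfrac12\di|\nabla\phi_i|^2$, and the resummation against $e^{-2\lambda_i t}$ controlled by (\ref{eq:eigenest}) is essentially the proof of \cite[Thm.~3.4]{Honda19}, to which the paper defers without reproducing the argument. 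Your reading of $\nabla_x\Delta_xp(x,x,2t)$ as the gradient of the Laplacian of the diagonal function $x\mapsto p(x,x,2t)$ is the intended one, consistent with how Lemma \ref{lem:commu} is stated and used.
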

See \cite[Thm.3.4]{Honda19} for the proof.
Let us give a remark on Theorem \ref{thm:pullbacklap} that if $\meas = \mathcal{H}^N$ (that is, $(X, \dist, \mathcal{H}^N)$ is a non-collapsed $\RCD(K, N)$ space), then multiplying by $t^{(N+2)/2}$ on both sides of (\ref{eq:lapt}) and then letting $t \to 0^+$ show
\begin{equation}\label{eq:alternativeproof}
\int_X\langle \nabla f, \nabla \phi\rangle \di \mathcal{H}^N=-\int_X\mathrm{tr}(\mathrm{Hess}_f)\phi\di \mathcal{H}^N.
\end{equation}
Since $H^{1, 2}(X, \dist, \meas) \cap L^{\infty}(X, \meas)$ is dense in $H^{1, 2}(X, \dist, \meas)$, (\ref{eq:alternativeproof}) is also satisfied for any $\phi \in H^{1, 2}(X, \dist, \meas)$. Thus by definition of $D(\Delta)$ we have (\ref{eq:laptrace}). In particular since \cite[Cor.1.3]{Honda19} proves that $\mathrm{dim}_{\dist, \meas}(X)=N$ implies $\meas=c\mathcal{H}^N$ for some $c \in (0, \infty)$, we reprove Theorem \ref{thm:hanresult}.

\begin{proposition}\label{prop:divfree}
For any $\omega \in H^{1, 2}_C(T^*(X, \dist, \meas))$ and any $t \in (0, \infty)$ we have
\begin{equation}\label{eq:generalformula}
\int_X\langle g_t, \nabla \omega \rangle \di \meas =-\frac{1}{4}\int_X\langle \omega,  \di_x \Delta_xp(x, x, 2t)\rangle \di \meas.
\end{equation}
\end{proposition}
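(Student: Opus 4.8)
The plan is to reduce the identity \eqref{eq:generalformula} for a general covariant Sobolev one-form $\omega \in H^{1,2}_C$ to the case of a single test form $f_0\,\di f_1$ by density, and then to verify it on test forms using the integration-by-parts characterization of $\nabla^*$ together with Theorem \ref{thm:pullbacklap}. First I would observe that both sides of \eqref{eq:generalformula} are continuous linear functionals of $\omega$ with respect to the $L^2(T^*)$-norm: the left-hand side because $g_t \in L^\infty((T^*)^{\otimes 2})$ (by \eqref{DefPullBack} and the gradient estimate \eqref{eq:equi lip}) so that $\langle g_t,\nabla\omega\rangle$ pairs against $\nabla\omega \in L^2$, and the right-hand side because $\di_x\Delta_x p(x,x,2t) \in L^2(T^*)$ (which follows from the spectral expansion \eqref{DefPullBack} and the estimates \eqref{eq:eigenest}, giving a convergent series). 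Since $\mathrm{Test}T^*(X,\dist,\meas)$ is dense in $H^{1,2}_C(T^*(X,\dist,\meas))$ by Definition \ref{def:cov2}, it suffices to prove \eqref{eq:generalformula} for $\omega = f_0\,\di f_1$ with $f_0,f_1 \in \mathrm{Test}F(X,\dist,\meas)$.

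The key step is then to compute the left-hand side for such $\omega$. Using the Leibniz rule \eqref{eq:leib}, $\nabla(f_0\,\di f_1) = \di f_0 \otimes \di f_1 + f_0\,\mathrm{Hess}_{f_1}$, so
\begin{equation}
\int_X \langle g_t, \nabla(f_0\,\di f_1)\rangle \di\meas = \int_X \langle g_t, \di f_0 \otimes \di f_1\rangle \di\meas + \int_X f_0\, \langle g_t, \mathrm{Hess}_{f_1}\rangle \di\meas.
\end{equation}
The first integrand is $g_t(\nabla f_0,\nabla f_1)$. The strategy is to apply Theorem \ref{thm:pullbacklap} with the roles of $f$ and $\phi$ played by $f_1$ and $f_0$: that theorem gives $\int_X g_t(\nabla f_1,\nabla f_0)\di\meas = -\int_X f_0\,\Delta^t f_1\,\di\meas$, where $\Delta^t f_1 = \langle g_t,\mathrm{Hess}_{f_1}\rangle + \tfrac14\langle \nabla f_1, \nabla_x\Delta_x p(x,x,2t)\rangle$. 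Substituting this in, the $\langle g_t,\mathrm{Hess}_{f_1}\rangle$ contribution coming from $\Delta^t f_1$ exactly cancels the second term above, and what survives is
\begin{equation}
\int_X \langle g_t, \nabla(f_0\,\di f_1)\rangle \di\meas = -\frac14 \int_X f_0\, \langle \nabla f_1, \nabla_x\Delta_x p(x,x,2t)\rangle \di\meas.
\end{equation}
Recognizing $f_0\,\langle\nabla f_1,\,\cdot\,\rangle = \langle f_0\,\di f_1,\,\cdot\,\rangle = \langle \omega,\,\cdot\,\rangle$ under the musical identification, the right-hand side is precisely $-\tfrac14\int_X \langle\omega, \di_x\Delta_x p(x,x,2t)\rangle\di\meas$, which is \eqref{eq:generalformula}.

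I expect the main obstacle to be the justification at the endpoints of the density argument rather than the algebraic cancellation, which is essentially forced. Concretely, one must confirm that $\di_x\Delta_x p(x,x,2t)$ genuinely defines an $L^2$ one-form so that the right-hand side extends continuously to all of $H^{1,2}_C$; this requires controlling the diagonal series $\sum_i \lambda_i e^{-2\lambda_i t}\,\di\phi_i\,\phi_i$ via \eqref{eq:eigenest}, where the Weyl-type lower bound $\lambda_i \ge C_5^{-1} i^{2/N}$ ensures absolute convergence for each fixed $t>0$. A secondary technical point is that Theorem \ref{thm:pullbacklap} is stated for $f \in D(\Delta)$ and $\phi \in H^{1,2}\cap L^\infty$, so one should check that test functions satisfy these hypotheses — which they do, since $\mathrm{Test}F \subset D(\Delta) \cap \Lip_b \subset H^{1,2}\cap L^\infty$ — and that the term $\langle g_t,\mathrm{Hess}_{f_1}\rangle$ appearing in $\Delta^t f_1$ is the same $L^1$ quantity appearing in the Leibniz expansion, so that the cancellation is valid in $L^1(X,\meas)$.
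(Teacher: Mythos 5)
Your proposal is correct and follows essentially the same route as the paper: reduce to $\omega=f_0\,\di f_1\in\mathrm{Test}T^*(X,\dist,\meas)$ by density, expand $\nabla(f_0\,\di f_1)$ via (\ref{eq:leib}), and apply Theorem \ref{thm:pullbacklap} with $f=f_1$, $\phi=f_0$ so that the $\langle g_t,\mathrm{Hess}_{f_1}\rangle$ contributions cancel. One cosmetic remark: the left-hand side is continuous in the $H^{1,2}_C$-norm (it involves $\nabla\omega$), not the $L^2(T^*)$-norm as you state, but since the density of $\mathrm{Test}T^*(X,\dist,\meas)$ in $H^{1,2}_C(T^*(X,\dist,\meas))$ is precisely with respect to the $W^{1,2}_C$-norm, this does not affect the argument.
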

\begin{proof}
It follows from (\ref{eq:lapt}) and (\ref{eq:leib}) that if $\omega=f_1\di f_2$ holds for some $f_i \in \mathrm{Test}F(X, \dist, \meas)$, then we have
\begin{align}
\int_X\langle g_t, \nabla \omega \rangle \di \meas &=\int_X\langle g_t, \di f_1 \otimes \di f_2+ f_1\mathrm{Hess}_{f_2} \rangle \di \meas \nonumber \\
&=-\frac{1}{4}\int_X\langle f_1\di f_2, \di_x \Delta_xp(x, x, 2t)\rangle \di \meas=-\frac{1}{4}\int_X\langle \omega,  \di_x \Delta_xp(x, x, 2t)\rangle \di \meas,
\end{align}
which easily implies the conclusion because by definition $\mathrm{Test}T^*(X, \dist, \meas)$ is dense in $H^{1, 2}_C(T^*(X, \dist, \meas))$.
\end{proof}
It is proved in \cite[Prop.3.6.1]{Gigli} that for all $f \in \mathrm{Test}F(X, \dist, \meas)$ we have $\di f \in D(\Delta_{H, 1})$ with
\begin{equation}\label{eq:commu}
\Delta_{H, 1} (\di f)=-\di \Delta f.
\end{equation}
\begin{lemma}\label{lem:commu}
For fixed $t \in (0, \infty)$, the function $x \mapsto p(x, x, t)$ is in $\mathrm{Test}F(X, \dist, \meas)$.. In particular we have $\di_xp(x, x, t) \in D(\Delta_{H, 1})$ with $\Delta_{H, 1}(\di_xp(x, x, t))=-\di_x \Delta_xp(x, x, t)$.
\end{lemma}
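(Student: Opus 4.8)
The plan is to reduce the entire statement to the single fact that the \emph{diagonal} function $\psi_t := [x \mapsto p(x,x,t)]$ is a test function; granting this, the ``in particular'' clause is immediate from (\ref{eq:commu}) applied to $f = \psi_t$, since $\di_x p(x,x,t) = \di \psi_t$ and $\di_x \Delta_x p(x,x,t) = \di(\Delta \psi_t)$ once the two are read off from the spectral expansion (\ref{eq:expansion1}). So from now on I focus on showing $\psi_t \in \mathrm{Test}F(X, \dist, \meas)$. By (\ref{eq:expansion1}) we have $\psi_t = \sum_{i \ge 0} e^{-\lambda_i t} \phi_i^2$, and each eigenfunction $\phi_i$ lies in $\mathrm{Test}F(X, \dist, \meas)$ (it is bounded and Lipschitz by (\ref{eq:eigenest}), and $\Delta \phi_i = -\lambda_i \phi_i \in H^{1,2}$); since $\mathrm{Test}F(X, \dist, \meas)$ is an algebra, every partial sum $\psi_t^{(M)} := \sum_{i \le M} e^{-\lambda_i t} \phi_i^2$ is a test function, and the strategy is to show $\psi_t^{(M)}$ converges, together with the relevant derivatives, strongly enough that the limit is again a test function with the expected Laplacian.

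The crucial input is the triple of estimates (\ref{eq:eigenest}): $\|\phi_i\|_{L^\infty} \le C_5 \lambda_i^{N/4}$, $\|\nabla \phi_i\|_{L^\infty} \le C_5 \lambda_i^{(N+2)/4}$ and $\lambda_i \ge C_5^{-1} i^{2/N}$. The last one forces $\lambda_i \to \infty$ at least polynomially in $i$, so that for every $t > 0$ and every exponent $\alpha \ge 0$ the series $\sum_i e^{-\lambda_i t} \lambda_i^\alpha$ converges; this is the mechanism that makes all the series below absolutely convergent. Combining the first two bounds gives $\|\phi_i^2\|_{L^\infty} \le C_5^2 \lambda_i^{N/2}$ and, by the Leibniz rule $\nabla(\phi_i^2) = 2 \phi_i \nabla \phi_i$, $\|\nabla(\phi_i^2)\|_{L^\infty} \le 2 C_5^2 \lambda_i^{(N+1)/2}$. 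Hence $\sum_i e^{-\lambda_i t} \phi_i^2$ converges uniformly and its slopes are uniformly summable, so by (\ref{eq:lipcheeger}) the limit $\psi_t$ is bounded and Lipschitz, i.e. $\psi_t \in \mathrm{Lip}_b(X, \dist)$.

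It remains to control $\Delta \psi_t$. By the Leibniz rule for test functions, $\Delta(\phi_i^2) = 2 \phi_i \Delta \phi_i + 2 |\nabla \phi_i|^2 = -2\lambda_i \phi_i^2 + 2|\nabla \phi_i|^2$, which is bounded in $L^\infty$ by $4 C_5^2 \lambda_i^{(N+2)/2}$; thus $\sum_i e^{-\lambda_i t}\Delta(\phi_i^2)$ converges in $L^2$, and since $\psi_t^{(M)} \to \psi_t$ in $L^2$ while $\Delta \psi_t^{(M)}$ converges in $L^2$, the closedness of $\Delta$ yields $\psi_t \in D(\Delta)$ with $\Delta \psi_t = \sum_i e^{-\lambda_i t}\Delta(\phi_i^2)$. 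To upgrade this to $\Delta \psi_t \in H^{1,2}$ I estimate the $L^2$-norm of $\nabla \Delta(\phi_i^2)$. The term $\nabla(\lambda_i \phi_i^2) = 2 \lambda_i \phi_i \nabla \phi_i$ is bounded in $L^\infty$, hence in $L^2$, by $C \lambda_i^{(N+3)/2}$; for $\nabla|\nabla \phi_i|^2 = 2 \mathrm{Hess}_{\phi_i}(\nabla \phi_i, \cdot)$ I combine the $L^\infty$ gradient bound with the $L^2$-Hessian bound coming from the Bochner inequality (\ref{eq:hessbound}), namely $\|\mathrm{Hess}_{\phi_i}\|_{L^2}^2 \le \lambda_i^2 - K \lambda_i$, which gives $\|\nabla|\nabla \phi_i|^2\|_{L^2} \le 2 \|\nabla \phi_i\|_{L^\infty}\|\mathrm{Hess}_{\phi_i}\|_{L^2} \le C \lambda_i^{(N+6)/4}$. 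Both bounds are polynomial in $\lambda_i$, so $\sum_i e^{-\lambda_i t}\|\nabla \Delta(\phi_i^2)\|_{L^2} < \infty$, the series defining $\Delta \psi_t$ converges in $H^{1,2}$, and therefore $\Delta \psi_t \in H^{1,2}(X, \dist, \meas)$; hence $\psi_t$ is a test function.

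I expect the main obstacle to be precisely this last step: the $H^{1,2}$-control of $\Delta \psi_t$ reduces to bounding $\nabla |\nabla \phi_i|^2 = 2\mathrm{Hess}_{\phi_i}(\nabla \phi_i, \cdot)$, which the zeroth- and first-order estimates (\ref{eq:eigenest}) cannot handle on their own and which forces one to feed in the $L^2$ Hessian bound (\ref{eq:hessbound}) from the Bochner inequality; the rest is the routine but slightly delicate bookkeeping of powers of $\lambda_i$ against the exponential weight, where $\lambda_i \ge C_5^{-1} i^{2/N}$ guarantees summability. Once $\psi_t \in \mathrm{Test}F(X, \dist, \meas)$ is established, (\ref{eq:commu}) gives $\di \psi_t \in D(\Delta_{H,1})$ with $\Delta_{H,1}(\di \psi_t) = -\di(\Delta \psi_t)$, which is the asserted identity.
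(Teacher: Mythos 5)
Your proof is correct and follows essentially the same route as the paper: both expand $p(x,x,t)=\sum_i e^{-\lambda_i t}\phi_i^2$, use the eigenfunction estimates (\ref{eq:eigenest}) to control the partial sums, their gradients and their Laplacians, and feed in the $L^2$ Hessian bound (\ref{eq:hessbound}) to handle the term $\nabla|\nabla\phi_i|^2=2\mathrm{Hess}_{\phi_i}(\nabla\phi_i,\cdot)$ before passing to the limit (the paper closes via Mazur's lemma, you via closedness of $\Delta$ and completeness of $H^{1,2}$, which is an immaterial difference) and concluding with (\ref{eq:commu}).
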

\begin{proof}
Since for fixed $l \in \mathbb{N}$, (\ref{eq:eigenest}) and (\ref{eq:hessbound}) show
\begin{align}
&\left|\sum_i^le^{-\lambda_i t} \phi_i^2\right|\le (C_5)^2\sum_ie^{-\lambda_it}\lambda_i^{N/2}<\infty, \\
&\left|\nabla \left(\sum_i^le^{-\lambda_it}\phi_i^2\right)\right|\le 2(C_5)^2\sum_ie^{-\lambda_it}\lambda_i^{(N+1)/2}<\infty, \\
&\resizebox{.9\hsize}{!}{$\left|\Delta \left(\sum\limits_i^le^{-\lambda t} \phi_i^2\right)\right|\le 2 \sum\limits_i^le^{-\lambda_it}\left(|\nabla \phi_i|^2 +|\phi_i| |\Delta \phi_i| \right) \le 4(C_5)^2\sum_ie^{-\lambda_it}\lambda_i^{(N+2)/2}<\infty$}
\end{align}
and 
\begin{align}
	 &\int_X\left|\nabla \left(\Delta \left(\sum_i^le^{-\lambda t} \phi_i^2\right)\right) \right|^2 \di \meas \nonumber \\
     &=2\int_X\left|\nabla \left(\sum_i^le^{-\lambda_it}\left(|\nabla \phi_i|^2 +\lambda_i\phi_i^2\right)\right)\right|^2\di \meas \nonumber \\
     &=2\,\resizebox{.9\hsize}{!}{$\sum\limits_i^le^{-2\lambda_it}\int_X\left(|\nabla |\nabla \phi_i|^2|^2 +4\lambda_i^2\phi_i^2|\nabla \phi_i|^2\right)\di \meas + 2\sum\limits_{i\neq j}^le^{-(\lambda_i+\lambda_j)t}\lambda_j\int_X\phi_j\mathrm{Hess}_{\phi_i}(\nabla \phi_i, \nabla \phi_j)\di \meas \nonumber $}\\
     &\le C(K, N, t)<\infty
\end{align}
letting $l \to \infty$ in above inequalities with Mazur's lemma completes the proof. 
{For the reader's convenience}, let us provide a proof as follows.

First it is easy to see that $D(\Delta)$ is a Hilbert space equipped with the norm 
$
\|f\|_{D}=\left(\|f\|_{H^{1, 2}}^2+\|\Delta f\|_{L^2}^2\right)^{1/2}.
$
Since the estimates above show that the sequence $\{\sum_{i}^le^{-\lambda_i t} \phi_i^2\}_l$ is bounded in $D(\Delta)$, we have a weak convergent subsequence to some $f$ in $D(\Delta)$. Then applying Mazur's lemma yields that this is a strong convergence because the sequence consists of linear combinations of  $e^{-\lambda_i t} \phi_i^2$. Since $\sum_{i}^le^{-\lambda_i t} \phi_i^2(x) \to p(x, x, t)$ in $L^2(X, \meas)$ as $l \to \infty$, we have $f(x)=p(x, x, t) \in D(\Delta)$. Moreover the estimates above also imply the equi-Lipschitz continuity of  $\{\sum_{i}^le^{-\lambda_i t} \phi_i^2\}_l$. Thus $f(x)=p(x, x, t)$ is Lipschitz. Similarly, applying Mazur's lemma for a sequence $\{\Delta \sum_{i}^le^{-\lambda_i t} \phi_i^2\}_l$ in $H^{1, 2}(X, \dist, \meas)$ yields that $\Delta p(x, x, t) \in H^{1, 2}(X, \dist, \meas)$. Thus $p(x, x, t) \in \mathrm{Test}F(X, \dist, \meas)$.
The remaining statement comes from (\ref{eq:commu}) as $f(x)=p(x, x, t)$.
\end{proof}
We are now in position to prove a technical key result which will play a role in the proof of Theorem \ref{thm:main}.
\begin{theorem}\label{thm:technical}
For any $t \in (0, \infty)$ and any $\omega \in D(\Delta_{H, 1})$ with $\Delta_{H, 1}\omega \in D(\delta)$ we have
\begin{equation}\label{eq:test}
\int_X\langle g_t, \nabla \omega\rangle \di \meas =\frac{1}{4}\int_X\delta (\Delta_{H, 1}\omega) p(x, x, 2t)\di \meas. 
\end{equation}
\end{theorem}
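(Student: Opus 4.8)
The plan is to chain together the three ingredients just assembled—Proposition~\ref{prop:divfree}, Lemma~\ref{lem:commu}, and the self-adjointness of the Hodge Laplacian—and to close with the defining property of $\delta$. First, since $D(\Delta_{H,1})\subset H^{1,2}_H(T^*(X,\dist,\meas))\subset H^{1,2}_C(T^*(X,\dist,\meas))$, the given $\omega$ lies in the domain required by Proposition~\ref{prop:divfree}, so that
\begin{equation}
\int_X\langle g_t,\nabla\omega\rangle\di\meas=-\frac14\int_X\langle\omega,\di_x\Delta_xp(x,x,2t)\rangle\di\meas.
\end{equation}
I would then rewrite the right-hand side using Lemma~\ref{lem:commu}, which asserts both that $\di_xp(x,x,2t)\in D(\Delta_{H,1})$ and that $\di_x\Delta_xp(x,x,2t)=-\Delta_{H,1}\bigl(\di_xp(x,x,2t)\bigr)$; this turns the integrand into $\tfrac14\langle\omega,\Delta_{H,1}(\di_xp(x,x,2t))\rangle$.

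Next comes the key step. Both $\omega$ and $\di_xp(x,x,2t)$ now lie in $D(\Delta_{H,1})$, and the quadratic form defining $\Delta_{H,1}$ in Definition~\ref{def:hodge}, namely $\int_X\langle\Delta_{H,1}\alpha,\beta\rangle\di\meas=\int_X(\langle\di\alpha,\di\beta\rangle+\delta\alpha\cdot\delta\beta)\di\meas$, is manifestly symmetric in $\alpha,\beta$. Hence the operator may be transferred onto $\omega$:
\begin{equation}
\frac14\int_X\langle\omega,\Delta_{H,1}(\di_xp(x,x,2t))\rangle\di\meas=\frac14\int_X\langle\Delta_{H,1}\omega,\di_xp(x,x,2t)\rangle\di\meas.
\end{equation}
Finally, the hypothesis $\Delta_{H,1}\omega\in D(\delta)$, combined with the fact from Lemma~\ref{lem:commu} that $x\mapsto p(x,x,2t)$ is a test function and in particular lies in $H^{1,2}(X,\dist,\meas)$, allows me to invoke Definition~\ref{def:adj} with the test function $h=p(\plchldr,\plchldr,2t)$, giving
\begin{equation}
\int_X\langle\Delta_{H,1}\omega,\di_xp(x,x,2t)\rangle\di\meas=\int_X\delta(\Delta_{H,1}\omega)\,p(x,x,2t)\di\meas,
\end{equation}
which is precisely the claimed identity \eqref{eq:test}.

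The only point demanding genuine care is the self-adjointness step: one must verify that \emph{both} arguments lie in $D(\Delta_{H,1})$ before moving the operator across the pairing. The membership $\omega\in D(\Delta_{H,1})$ is a hypothesis, while $\di_xp(x,x,2t)\in D(\Delta_{H,1})$ is exactly the content of Lemma~\ref{lem:commu}, so both are in hand and the symmetry of the Hodge quadratic form applies verbatim. The concluding integration by parts through $\delta$ is then immediate, its sole prerequisite—$\Delta_{H,1}\omega\in D(\delta)$—being the standing assumption of the theorem. Thus the proof is essentially a bookkeeping of signs and domains rather than a substantive estimate, with all the analytic content already absorbed into Proposition~\ref{prop:divfree} and Lemma~\ref{lem:commu}.
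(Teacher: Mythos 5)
Your proposal is correct and follows exactly the same chain as the paper's proof: Proposition \ref{prop:divfree}, then Lemma \ref{lem:commu} to rewrite $\di_x\Delta_x p(x,x,2t)$ as $-\Delta_{H,1}(\di_x p(x,x,2t))$, then the symmetry of the Hodge quadratic form, and finally Definition \ref{def:adj} with $h=p(\plchldr,\plchldr,2t)$. The domain checks you flag (membership of $\omega$ in $H^{1,2}_C$ and of $\di_x p(x,x,2t)$ in $D(\Delta_{H,1})$) are precisely the points the paper relies on implicitly, so nothing is missing.
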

\begin{proof}
Proposition \ref{prop:divfree} and Lemma \ref{lem:commu} yield
\begin{align*}
\int_X\langle g_t, \nabla \omega\rangle \di \meas 
&=-\frac{1}{4}\int_X\langle \omega, \di_x \Delta_xp(x, x, 2t)\rangle \di \meas =\frac{1}{4}\int_X\langle \omega, \Delta_{H, 1}(\di_xp(x, x, 2t))\rangle\di \meas \\
&=\frac{1}{4}\int_X\langle \Delta_{H, 1}\omega, \di_x p(x, x, 2t)\rangle \di \meas = \frac{1}{4}\int_X\delta (\Delta_{H, 1}\omega) p(x, x, 2t)\di \meas.
\end{align*}
\end{proof}
\subsection{Non-collapsed $\RCD(K, N)$ space and fine properties on Sobolev spaces}
The main purpose of this subsection is to recall the definition of non-collapsed $\RCD(K, N)$ spaces and to introduce fine properties on Sobolev spaces of the spaces.
Non-collapsed $\RCD(K, N)$ spaces are introduced in \cite{DePhillippisGigli} as a synthetic counterpart of non-collapsed Ricci limit spaces. The definition is as follows.
\begin{definition}[Non-collapsed $\RCD(K, N)$ space]
An $\RCD(K, N)$ space $(X, \dist, \meas)$ is said to be \textit{non-collapsed} if $\meas=\mathcal{H}^N$ holds.
\end{definition}
Non-collapsed $\RCD(K, N)$ space have nicer properties than that of general $\RCD(K, N)$ spaces. 
For example we have
\begin{equation}\label{eq:coinci}
H^{1, 2}_H(T^*(X, \dist, \mathcal{H}^N))=H^{1, 2}_C(T^*(X, \dist, \mathcal{H}^N)),
\end{equation}
which is a direct consequence of the following result proved in \cite[Prop.4.1]{Han}, see Corollary \ref{cor:equiv}. 
\begin{theorem}\label{thm:divtr}
Let $(X, \dist, \mathcal{H}^N)$ be a non-collapsed $\RCD(K, N)$ space. Then
we have $H^{1, 2}_C(T^*(X, \dist, \mathcal{H}^N)) \subset D(\delta)$ with
\begin{equation}\label{eq:deltatr}
\delta \omega=-\mathrm{tr} \nabla \omega, \quad \forall \omega \in H^{1, 2}_C(T^*(X, \dist, \mathcal{H}^N)).
\end{equation}
\end{theorem}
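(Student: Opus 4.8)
The plan is to first verify the identity on test $1$-forms by a direct computation, and then to extend it to all of $H^{1,2}_C(T^*(X,\dist,\mathcal{H}^N))$ by density together with the closedness of $\delta$. The crucial input is the maximal-dimension identity of Theorem \ref{thm:hanresult}, which forces the Laplacian to agree with the trace of the Hessian; this is precisely where the non-collapsed hypothesis $\meas=\mathcal{H}^N$ enters, and it is what fails on genuinely weighted (collapsed) spaces, where $\delta$ carries an extra drift term (compare \eqref{eq:witten}).

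First I would take $\omega=f_1\di f_2$ with $f_1,f_2\in\mathrm{Test}F(X,\dist,\mathcal{H}^N)$. Combining the Leibniz rule \eqref{eq:leib} with the definition $\mathrm{tr}(T)=\langle T,g\rangle$ and the defining property $\langle g,\di f_1\otimes\di f_2\rangle=\langle\nabla f_1,\nabla f_2\rangle$ of the Riemannian metric $g$, I obtain
\begin{equation*}
\mathrm{tr}\big(\nabla(f_1\di f_2)\big)=\langle\nabla f_1,\nabla f_2\rangle+f_1\,\mathrm{tr}(\mathrm{Hess}_{f_2}).
\end{equation*}
Since $\dim_{\dist,\meas}(X)=N$, Theorem \ref{thm:hanresult} gives $\mathrm{tr}(\mathrm{Hess}_{f_2})=\Delta f_2$, so the right-hand side equals $\langle\nabla f_1,\nabla f_2\rangle+f_1\Delta f_2=-\delta(f_1\di f_2)$ by \eqref{eq;delta}. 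By linearity this yields $\delta\omega=-\mathrm{tr}\nabla\omega$ for every $\omega\in\mathrm{Test}T^*(X,\dist,\mathcal{H}^N)$.

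Next I would pass to the limit. Given an arbitrary $\omega\in H^{1,2}_C(T^*(X,\dist,\mathcal{H}^N))$, by Definition \ref{def:cov2} I may choose test $1$-forms $\omega_k\to\omega$ in the $W^{1,2}_C$-norm, so that $\omega_k\to\omega$ and $\nabla\omega_k\to\nabla\omega$ in $L^2$. Since $|g|^2=\mathrm{tr}(g)=N$ holds $\mathcal{H}^N$-a.e. in the maximal-dimensional case, the pointwise bound $|\mathrm{tr}(T)|\le\sqrt{N}\,|T|$ shows that $\mathrm{tr}$ is a bounded operator from $L^2((T^*)^{\otimes2}(X,\dist,\mathcal{H}^N))$ to $L^2(X,\mathcal{H}^N)$; hence $\delta\omega_k=-\mathrm{tr}\nabla\omega_k\to-\mathrm{tr}\nabla\omega$ in $L^2$. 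The operator $\delta$ is defined in Definition \ref{def:adj} through an integration-by-parts (adjoint) relation and is therefore closed, so from $\omega_k\to\omega$ and $\delta\omega_k\to-\mathrm{tr}\nabla\omega$ in $L^2$ I conclude $\omega\in D(\delta)$ with $\delta\omega=-\mathrm{tr}\nabla\omega$, which is the claim.

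The only genuinely nontrivial ingredient is Theorem \ref{thm:hanresult}; everything else is soft functional analysis (an explicit computation on a dense set, continuity of the trace, and closedness of $\delta$). The main obstacle, were Theorem \ref{thm:hanresult} unavailable, would be to rule out a drift-type correction to $\delta$, which can only be excluded in the non-collapsed regime $\meas=\mathcal{H}^N$ where the reference measure agrees with the Hausdorff measure of the essential dimension.
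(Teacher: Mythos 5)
Your proposal is correct and follows essentially the same route as the paper: the identity is verified on $\mathrm{Test}T^*(X,\dist,\mathcal{H}^N)$ by combining \eqref{eq;delta}, \eqref{eq:leib} and Theorem \ref{thm:hanresult}, and then extended by density of test $1$-forms in $H^{1,2}_C$. The only difference is that you spell out the closure step (boundedness of $\mathrm{tr}$ on $L^2$ tensors and closedness of $\delta$) which the paper leaves implicit.
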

\begin{proof}
Theorem \ref{thm:hanresult} with (\ref{eq;delta}) yields that for all $f_i \in \mathrm{Test}F(X, \dist, \mathcal{H}^N)$,
\begin{align*}
\delta(f_1\di f_2)&=-\langle \di f_1, \di f_2\rangle -f_1\Delta f_2 \\
&=-\langle \di f_1, \di f_2\rangle -f_1\mathrm{tr}(\mathrm{Hess}_{f_2}) \\
&=-\langle g, \di f_1 \otimes \di f_2\rangle -\langle g, f_1\mathrm{Hess}_{f_2}\rangle =-\langle g, \nabla (f_1\di f_2)\rangle =-\mathrm{tr}\nabla (f_1\di f_2)
\end{align*} 
holds, which shows that (\ref{eq:deltatr}) holds for all $\omega \in \mathrm{Test}T^*(X, \dist, \mathcal{H}^N)$.
Thus we have the conclusion because by definition $\mathrm{Test}T^*(X, \dist, \mathcal{H}^N)$ is dense in $H^{1, 2}_C(T^*(X, \dist, \mathcal{H}^N))$.
\end{proof}
It directly follows from Theorem \ref{thm:divtr} that for a non-collapsed $\RCD(K, N)$ space $(X, \dist, \mathcal{H}^N)$ and any $f \in D(\Delta)$, we have $fg \in D(\nabla^*)$ with 
\begin{equation}
\nabla^*(fg)=-\di f
\end{equation}
because for any $\omega \in H^{1, 2}_C(T^*(X, \dist, \mathcal{H}^N))$, 
\begin{equation}
\int_X\langle \omega, \nabla^*(fg)\rangle \di \mathcal{H}^N=\int_X\langle \nabla \omega, fg\rangle \di \mathcal{H}^N =\int_Xf\delta \omega \di \mathcal{H}^N=\int_X\langle \di f, \omega\rangle \di \mathcal{H}^N.
\end{equation}
The following is also a direct consequence of (\ref{eq:boch}), (\ref{eq:point}) and Theorem \ref{thm:divtr}:
\begin{corollary}\label{cor:equiv}
Let $(X, \dist, \mathcal{H}^N)$ be a non-collapsed $\RCD(K, N)$ space. Then we have $H^{1, 2}_H(T^*(X, \dist, \mathcal{H}^N))=H^{1, 2}_C(T^*(X, \dist, \mathcal{H}^N))$ with 
\begin{equation}
\frac{1}{2}\|\omega\|_{H^{1, 2}_H}\le \|\omega\|_{H^{1, 2}_C} \le (1+K^-)\|\omega\|_{H^{1, 2}_H}, \quad \forall \omega \in H^{1, 2}_H(T^*(X, \dist, \mathcal{H}^N)),
\end{equation}
where $K^-=\max \{0, -K\}$.
\end{corollary}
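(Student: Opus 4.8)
The plan is to read both $H^{1,2}_H(T^*(X,\dist,\mathcal{H}^N))$ and $H^{1,2}_C(T^*(X,\dist,\mathcal{H}^N))$ as completions of the common dense subspace $\mathrm{Test}T^*(X,\dist,\mathcal{H}^N)$, the first with respect to $\|\cdot\|_{H^{1,2}_H}$ and the second with respect to $\|\cdot\|_{H^{1,2}_C}$, both realized inside $L^2(T^*(X,\dist,\mathcal{H}^N))$. Since the inclusion $H^{1,2}_H \subseteq H^{1,2}_C$ together with the right-hand estimate is already recorded through (\ref{eq:boch}), the task reduces to the reverse inclusion and the left-hand estimate. I would obtain both by establishing the two-sided norm equivalence on $\mathrm{Test}T^*(X,\dist,\mathcal{H}^N)$ and then transporting it to the completions.

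For the upper bound, (\ref{eq:boch}) gives $\|\nabla\omega\|_{L^2}^2 \le \|\di\omega\|_{L^2}^2+\|\delta\omega\|_{L^2}^2+K^-\|\omega\|_{L^2}^2$, so that $\|\omega\|_{H^{1,2}_C}^2 \le (1+K^-)\|\omega\|_{H^{1,2}_H}^2$, whence $\|\omega\|_{H^{1,2}_C}\le (1+K^-)\|\omega\|_{H^{1,2}_H}$ using $1+K^-\ge 1$. The lower bound is where the non-collapsed hypothesis must enter. Here I would bound $\|\di\omega\|_{L^2}^2\le 2\|\nabla\omega\|_{L^2}^2$ directly from (\ref{eq:point}), and control $\delta\omega$ through Theorem \ref{thm:divtr}: since $\delta\omega=-\mathrm{tr}\nabla\omega$ on the non-collapsed space, Cauchy--Schwarz ($|\mathrm{tr}\nabla\omega|=|\langle g,\nabla\omega\rangle|\le |g|\,|\nabla\omega|$) yields an $L^2$-bound of $\delta\omega$ by $\|\nabla\omega\|_{L^2}$. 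Combining these two pointwise estimates with the definitions of the two norms produces the left-hand inequality $\tfrac12\|\omega\|_{H^{1,2}_H}\le \|\omega\|_{H^{1,2}_C}$.

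Finally, since these equivalences hold on the dense subspace $\mathrm{Test}T^*(X,\dist,\mathcal{H}^N)$, the identity map extends to a bi-Lipschitz identification of the completions; because convergence in either norm forces $L^2$-convergence, limits are consistent and the two completions coincide as subspaces of $L^2(T^*(X,\dist,\mathcal{H}^N))$. Concretely, given $\omega\in H^{1,2}_C$ one approximates it by test forms in $\|\cdot\|_{H^{1,2}_C}$; these are then $\|\cdot\|_{H^{1,2}_H}$-Cauchy by the lower bound, hence converge in $H^{1,2}_H$ to a limit that must agree with $\omega$ in $L^2$, so $\omega\in H^{1,2}_H$. The main obstacle is precisely this lower bound: the control of $\delta\omega$ by $\nabla\omega$ rests entirely on the identity $\delta\omega=-\mathrm{tr}\nabla\omega$ from Theorem \ref{thm:divtr}, which itself relies on $\Delta f=\mathrm{tr}(\mathrm{Hess}_f)$ under maximal dimension (Theorem \ref{thm:hanresult}); for genuinely collapsed spaces this identity fails, and then $H^{1,2}_C$ may be strictly larger than $H^{1,2}_H$.
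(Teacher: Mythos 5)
Your proposal follows exactly the route the paper intends: the paper gives no detailed argument for this corollary, declaring it ``a direct consequence of (\ref{eq:boch}), (\ref{eq:point}) and Theorem \ref{thm:divtr}'', and these are precisely the three ingredients you combine --- the Bochner-type bound for the upper estimate, the pointwise bound on $\di\omega$ together with $\delta\omega=-\mathrm{tr}\nabla\omega$ for the lower estimate, and density of test forms to transport the norm equivalence to the completions. The completion step is handled correctly.

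One concrete quibble: your Cauchy--Schwarz step does not actually deliver the constant $\tfrac12$. Since $|g|^2=\langle g,g\rangle=N$ a.e.\ on a non-collapsed space, you get $|\delta\omega|=|\mathrm{tr}\nabla\omega|\le\sqrt{N}\,|\nabla\omega|$, which combined with $|\di\omega|^2\le 2|\nabla\omega|^2$ yields $\|\omega\|_{H^{1,2}_H}^2\le(N+2)\|\omega\|_{H^{1,2}_C}^2$, i.e.\ the dimension-dependent constant $1/\sqrt{N+2}$ rather than $1/2$. This is not a defect you could have repaired, because the constant $\tfrac12$ as printed appears to be false for large $N$: on the round sphere $S^N$ take $\omega=\di f$ with $f$ a first eigenfunction, so that $\mathrm{Hess}_f=-fg$ and $\Delta f=-Nf$; then $\|\omega\|_{H^{1,2}_H}^2=N(N+1)\int f^2$ while $\|\omega\|_{H^{1,2}_C}^2=2N\int f^2$, and the ratio $(N+1)/2$ exceeds $4$ once $N\ge 8$. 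The qualitative conclusion (equality of the two spaces with equivalent norms), which is all that is used later, is unaffected; only the numerical constant in the display should be dimension-dependent.
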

It is proved in \cite{DePhillippisGigli} that any non-collapsed $\RCD(K, N)$ space $(X, \dist, \mathcal{H}^N)$ satisfies $\mathrm{dim}_{\dist, \meas}(X)=N$. It is also conjectured that the converse implication is true up to multiplying a positive constant to the measure, that is, if a $\RCD(K, N)$ space $(X, \dist, \meas)$ satisfies $\mathrm{dim}_{\dist, \meas}(X)=N$, then $\meas=a\mathcal{H}^N$ holds for some $a \in (0 ,\infty)$. Note that by definition, the $\RCD(K, N)$ condition is unchanged after multiplying a positive constant to the measure; if $(X, \dist, \meas)$ is an $\RCD(K, N)$ space, then $(X, \dist, a\meas)$ is also an $\RCD(K, N)$ space for any $a \in (0, \infty)$. Therefore  $(X, \dist, a\mathcal{H}^N)$ is an $\RCD(K, N)$ space for some $a \in (0, \infty)$, then $(X, \dist, \mathcal{H}^N)$ is a non-collapsed $\RCD(K, N)$ space.
Thus the conjecture states that the maximality of the essential dimension characterizes the non-collapsed condition.

It is proved in \cite[Cor.1.3]{Honda19} that the conjecture is true when $(X, \dist)$ is compact. 
Finally we introduce another characterization for being a non-collapsed $\RCD(K, N)$ space proved in \cite[Cor.4.2]{Honda19}:
\begin{theorem}[Characterization of non-collapsed $\RCD(K, N)$ space]\label{thm:charnoncollapsed}
Let $(X, \dist, \mathcal{H}^n)$ be a compact $\RCD(K, N)$ space. Then the following two conditions are equivalent:
\begin{enumerate}
\item $(X, \dist, \mathcal{H}^n)$ is a non-collapsed $\RCD(K, n)$ space.
\item We have
\begin{equation}
\inf_{x \in X, r \in (0, 1)}\frac{\mathcal{H}^n(B_r(x))}{r^n}>0.
\end{equation}
\end{enumerate}
\end{theorem}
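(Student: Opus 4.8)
The plan is to prove the two implications separately; the implication $(2)\Rightarrow(1)$ carries all the content, while $(2)\Leftarrow(1)$ is soft. I would first record the harmless preliminary that $\dimnew(X)=n$. Writing $m:=\dimnew(X)$, on the top-dimensional regular set $\mathcal{R}_m$ one has $\meas=\theta\mathcal{H}^m$ with $0<\theta<\infty$ for $\meas$-a.e. point, by \cite{BrueSemola} and \cite[Thm.4.1]{AmbrosioHondaTewodrose}. Since $\meas=\mathcal{H}^n$ is finite (by compactness) with full support, comparing the two Hausdorff measures $\mathcal{H}^n$ and $\mathcal{H}^m$ on the positive-measure set $\mathcal{R}_m$ forces $m=n$: if $m<n$ then $\mathcal{H}^n(\mathcal{R}_m)=0$ would contradict $\meas(\mathcal{R}_m)=\meas(X)>0$, whereas if $m>n$ then $\mathcal{H}^n(\mathcal{R}_m)=+\infty$ would contradict the finiteness of $\meas$.

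For $(1)\Rightarrow(2)$ I would use only the sharp Bishop--Gromov monotonicity available on a non-collapsed $\RCD(K,n)$ space: the map $r\mapsto \mathcal{H}^n(B_r(x))/v_{K,n}(r)$ is non-increasing, where $v_{K,n}(r)$ denotes the volume of the $r$-ball in the $n$-dimensional model. Evaluating at the finite diameter $D=\diam(X,\dist)$, where $B_D(x)=X$, gives for every $x\in X$ and $r\in(0,1)$ that $\mathcal{H}^n(B_r(x))\ge \frac{\mathcal{H}^n(X)}{v_{K,n}(D)}\,v_{K,n}(r)\ge c\,r^n$, the last step because $v_{K,n}(r)/r^n\to\omega_n$ as $r\to0^+$ and $\mathcal{H}^n(X)>0$. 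The constant $c$ is independent of $x$, which is exactly (\ref{eq:lower}) with exponent $n$.

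The heart of the matter is $(2)\Rightarrow(1)$. Since $\meas=\mathcal{H}^n$ already holds, it remains only to lower the dimension parameter from $N$ to the essential dimension $n$, i.e. to establish the Bochner inequality with the sharp constant $1/n$ (the other three $\RCD(K,n)$ axioms are inherited from $\RCD(K,N)$). I would route this through the identity $\Delta f=\mathrm{tr}(\mathrm{Hess}_f)$. First, condition (\ref{eq:lower}) supplies the uniform bound $\omega_n t^{n/2}/\meas(B_{\sqrt t}(x))\le \omega_n/c$, the domination needed to promote (\ref{eq:ahpt}) to $c(n)t^{(n+2)/2}g_t\to g$ in every $L^p$, $p<\infty$, exactly as in the passage to (\ref{eq:firstpricipal}) (with limit $g$ since $\frac{\di\mathcal{H}^n}{\di\meas}\equiv1$). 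Now multiply the key identity (\ref{eq:lapt}) of Theorem \ref{thm:pullbacklap} by $c(n)t^{(n+2)/2}$ and let $t\to0^+$: its left-hand side converges to $\int_X\langle\nabla f,\nabla\phi\rangle\di\mathcal{H}^n$, and the principal term $-\int_X\phi\langle c(n)t^{(n+2)/2}g_t,\mathrm{Hess}_f\rangle\di\mathcal{H}^n$ of $-\int_X\phi\,\Delta^tf\di\mathcal{H}^n$ converges to $-\int_X\phi\,\mathrm{tr}(\mathrm{Hess}_f)\di\mathcal{H}^n$. Provided the remaining divergence term of $\Delta^tf$ is negligible in this limit (treated below), one obtains (\ref{eq:alternativeproof}), hence $\Delta f=\mathrm{tr}(\mathrm{Hess}_f)$ for all $f\in D(\Delta)$ after using density of $H^{1,2}\cap L^\infty$. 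Finally, $\dimnew(X)=n$ gives $|g|^2\equiv n$, so Cauchy--Schwarz yields $(\Delta f)^2=\langle g,\mathrm{Hess}_f\rangle^2\le n\,|\mathrm{Hess}_f|^2$; feeding this into (\ref{eq:bochner}) replaces $|\mathrm{Hess}_f|^2$ by $(\Delta f)^2/n$ and produces precisely the $\RCD(K,n)$ Bochner inequality, so that $(X,\dist,\mathcal{H}^n)$ is a non-collapsed $\RCD(K,n)$ space.

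The delicate point — and where (\ref{eq:lower}) must be used quantitatively, not merely as a domination — is the vanishing, in the scaled limit above, of the term $\frac{1}{4}\langle\nabla f,\nabla_x\Delta_x p(x,x,2t)\rangle$ inside $\Delta^tf$. I expect this to be the main obstacle. Re-expanding it through Proposition \ref{prop:divfree} only reproduces an identity and is therefore circular; one genuinely needs an independent estimate showing this term is of lower order, which is exactly the feature distinguishing the non-collapsed case (a constant leading on-diagonal heat coefficient) from the weighted case, where the spatial variation of that coefficient produces the extra drift $\langle\nabla f,\nabla\cdot\rangle$ of the Witten Laplacian (\ref{eq:witten}), cf. Theorem \ref{thm:WeightedExpansion}. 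Should a direct heat-kernel estimate prove cumbersome, an alternative is to combine the identity $\Delta f=\mathrm{tr}(\mathrm{Hess}_f)$ with the compact case of the DePhilippis--Gigli conjecture \cite[Cor.1.3]{Honda19}, as indicated in the discussion preceding Proposition \ref{prop:divfree}.
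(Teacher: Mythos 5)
First, a point of reference: the paper does not prove this theorem at all --- it is quoted from \cite[Cor.4.2]{Honda19} without proof --- so your proposal can only be measured against that source and against its own internal completeness. Your overall architecture is the right one and mirrors the remark the paper makes after Theorem \ref{thm:pullbacklap}: the implication $(1)\Rightarrow(2)$ via the sharp Bishop--Gromov monotonicity of De Philippis--Gigli is complete and correct; the preliminary identification of the essential dimension with $n$ via rectifiability of $\mathcal{R}_m$ is fine; and the reduction of $(2)\Rightarrow(1)$ to the trace identity $\Delta f=\mathrm{tr}(\mathrm{Hess}_f)$, followed by Cauchy--Schwarz in (\ref{eq:bochner}) to upgrade $|\mathrm{Hess}_f|^2$ to $(\Delta f)^2/n$, is exactly the intended route.

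However, there is a genuine gap, and you have located it yourself without closing it: the vanishing, after multiplication by $c(n)t^{(n+2)/2}$, of the drift term $\tfrac14\langle\nabla f,\nabla_x\Delta_xp(x,x,2t)\rangle$ in $\Delta^tf$. This step is not a technicality to be deferred --- it is precisely what distinguishes the non-collapsed case from the weighted case of Theorem \ref{thm:WeightedExpansion}, and it is the entire analytic content of \cite[Cor.4.2]{Honda19}; writing ``provided the remaining divergence term is negligible'' and ``I expect this to be the main obstacle'' leaves the crux unproved. Concretely, and contrary to your worry about circularity, the estimate can be done: integrate by parts to rewrite the term as $t^{(n+2)/2}\int_X\delta(\phi\di f)\,\Delta_xp(x,x,2t)\di\mathcal{H}^n$ for test $\phi, f$; establish the uniform bound $\sup_{x,t}t^{n/2+1}|\Delta_xp(x,x,2t)|<\infty$ from the Gaussian bound on the time derivative of the heat kernel combined with hypothesis (2); and prove the pointwise a.e.\ convergence $t^{n/2+1}\Delta_xp(x,x,2t)\to-\tfrac{n}{4}(8\pi)^{-n/2}$, which can be extracted from (\ref{eq:shorttime}) by a Tauberian differentiation using the monotonicity and convexity of $s\mapsto p(x,x,s)$. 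Dominated convergence then reduces the term to a constant multiple of $\int_X\delta(\phi\di f)\di\mathcal{H}^n=0$. Your proposed fallback does not rescue the argument: \cite[Cor.1.3]{Honda19} concerns compact spaces whose essential dimension equals the a priori parameter $N$, which is not the situation here when $n<N$, and the identity $\Delta f=\mathrm{tr}(\mathrm{Hess}_f)$ you propose to ``combine'' it with is exactly what the missing estimate was supposed to deliver. As it stands, the proof of $(2)\Rightarrow(1)$ is incomplete at its central point.
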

\subsection{Proof of Theorem \ref{thm:main}}
Let us fix a compact $\RCD(K, N)$ space $(X, \dist, \meas)$. 
We recall a result proved in \cite{AmbrosioHondaTewodrose} which states that for all $x \in \mathcal{R}_n$ we have
\begin{equation}\label{eq:shorttime}
\lim_{t \to 0^+}\meas (B_{t^{1/2}}(x))p(x, x, t) =\frac{\omega_n}{(4\pi)^{n/2}}.
\end{equation}

First let us prove the implication from (2) to (1). Assume that  (2) holds. It is trivial from the Bishop-Gromov inequality that (\ref{eq:lower}) holds.
Let $\omega \in D(\Delta_{H, 1})$ with $\Delta_{H, 1}\omega \in D(\delta )$.
Then Theorems \ref{thm:technical} and \ref{thm:divtr} show
\begin{align}\label{al:key}
\int_X\left\langle \frac{c(n)t^{(n+2)/2}g_t-g}{t}, \nabla \omega \right\rangle \di \mathcal{H}^n &=c(n)t^{n/2}\int_X\langle g_t, \nabla \omega \rangle \di \mathcal{H}^n -\frac{1}{t}\int_X \mathrm{tr}\nabla \omega\di \mathcal{H}^n \nonumber \\
&=\frac{c(n)}{4}\int_X\delta (\Delta_{H, 1}\omega) t^{n/2}p(x, x, 2t)\di \mathcal{H}^n+\frac{1}{t}\int_X\delta \omega \di \mathcal{H}^n \nonumber \\
&=\frac{c(n)}{4}\int_X\delta (\Delta_{H, 1}\omega) t^{n/2}p(x, x, 2t)\di \mathcal{H}^n. 
\end{align}
On the other hand (\ref{eq:shorttime}) shows that for any $x \in \mathcal{R}_n$, as $t \to 0^+$
\begin{align}\label{eq:pointwise}
t^{n/2}p(x, x, 2t)&=\frac{1}{\omega_n2^{n/2}}\cdot \frac{\omega_n(2t)^{n/2}}{\mathcal{H}^n(B_{(2t)^{1/2}}(x))} \cdot \mathcal{H}^n(B_{(2t)^{1/2}}(x))p(x, x, 2t) \nonumber \\
&\to \frac{1}{\omega_n2^{n/2}} \cdot 1 \cdot \frac{\omega_n}{(4\pi)^{n/2}}=(8\pi)^{-n/2}.
\end{align}
Since the Bishop-Gromov inequality with (\ref{eq:gaussian}) yields
\begin{equation}
t^{n/2}p(x, x, 2t) \le C(K, n, \mathrm{diam}(X, \dist), \mathcal{H}^n(X)) <\infty,
\end{equation}
letting $t \to 0^+$ in (\ref{al:key}) with the dominated convergence theorem yields that as $t \to 0^+$
\begin{equation}
(\mathrm{RHS})\,\mathrm{of}\,(\ref{al:key}) \to \frac{c(n)}{4(8\pi)^{n/2}}\int_X\delta (\Delta_{H, 1}\omega)\di \mathcal{H}^n=0
\end{equation}
which completes the proof of the desired implication. 

Next we prove the implication from (1) to (2).
Assume that (1) holds. Then for any $\omega \in D(\Delta_{H, 1})$ with $\Delta_{H, 1}\omega \in D(\delta )$ we have
\begin{equation}\label{eq:zeroconv}
\int_X\left\langle c(n)t^{n/2}g_t, \nabla \omega \right\rangle \di \meas-\frac{1}{t}\int_X\mathrm{tr} (\nabla \omega) \di \mathcal{H}^n \to 0.
\end{equation}
Since (\ref{eq:lower}) and (\ref{eq:gaussian}) imply
\begin{equation}
\sup_{t \in (0, 1), x \in X}t^{n/2}p(x, x, 2t)<\infty,
\end{equation} 
the same argument as above yields that
\begin{equation}\label{eq:convergence}
\int_X\left\langle c(n)t^{n/2}g_t, \nabla \omega \right\rangle \di \meas \to \frac{c(n)}{4(8\pi)^{n/2}}\int_X\delta (\Delta_{H, 1}\omega) \di \mathcal{H}^n \in \mathbb{R}.
\end{equation}
In particular combining (\ref{eq:zeroconv}) with (\ref{eq:convergence}) shows that 
\begin{equation}
\frac{1}{t}\int_X\mathrm{tr} (\nabla \omega) \di \mathcal{H}^n
\end{equation}
converges as $t \to 0^+$. This convergence forces  
\begin{equation}
\int_X\mathrm{tr} (\nabla \omega) \di \mathcal{H}^n=0.
\end{equation}
Therefore by (\ref{eq:convergence}) it holds that
\begin{equation}\label{eq:orthogonal}
0=\int_X\delta \Delta_{H, 1}\omega \di \mathcal{H}^n=\int_X\delta (\Delta_{H, 1}\omega) \frac{\di \mathcal{H}^n}{\di \meas}\di \meas.
\end{equation}
For any eigenfunction $f$ of $\Delta$ on $(X, \dist, \meas)$ whose eigenvalue is not zero, letting $\omega=\di f$ in (\ref{eq:orthogonal}) shows
\begin{equation}
\int_Xf\frac{\di \mathcal{H}^n}{\di \meas}\di \meas =0
\end{equation}
which proves that $\frac{\di \mathcal{H}^n}{\di \meas}$ is a constant function because $f$ is an arbitrary eigenfunction. Thus we have (\ref{eq:meashauss}). Then the conclusion follows from Theorem \ref{thm:charnoncollapsed}. \hfill $\square$

\subsection{Weakly asymptotically divergence free}\label{subsec:WADF}
In order to prove Corollary \ref{cor:divfree} let us introduce the following notion:
\begin{definition}[Weakly asymptotically divergence free]\label{def:asy}
 Let $\{T_t\}_{t \in (0, 1)}$ be a family of $L^2$-tensor fields of type $(0, 2)$ on $X$. We say that it is \textit{weakly asymptotically divergence free as $t \to 0^+$} if there exists a dense subset $V$ of $H^{1, 2}_C(T^*(X, \dist, \meas))$ such that for any $\omega \in V$ we have
    \begin{equation}\label{eq:convzero}
       \int_X\langle T_t, \nabla \omega \rangle \di \meas \to 0
    \end{equation}
 as $t \to 0^+$.
\end{definition}
Note that Theorem \ref{thm:main} implies that a family of $L^{\infty}$-tensors (\ref{eq:targetquotient}) is weakly asymptotically divergence free as $t \to 0^+$ if an $\RCD(K, n)$ space $(X, \dist, \meas)$ satisfies $\mathrm{dim}_{\dist, \meas}(X)=n$ because the space 
    \begin{equation}\label{eq:testclass}
       \{\omega \in D(\Delta_{H, 1}); \Delta_{H, 1}\omega \in D(\delta )\}
    \end{equation}
    is dense in $H^{1, 2}_C(T^*(X, \dist, \meas))$, see for instance Remark \ref{rem:expansion}.
    Corollary \ref{cor:divfree} is a direct consequence of Theorem \ref{thm:main} with the following proposition.
\begin{proposition}\label{prop:equivalence}
    Let $\{T_t\}_{t \in (0, 1)}$ be a family of $L^2$-tensor fields of type $(0, 2)$ on $X$ with 
    \begin{equation}
       \limsup_{t \to 0^+}\|T_t\|_{L^2}<\infty
    \end{equation} 
Then the following two conditions are equivalent:
    \begin{enumerate}
       \item $\{T_t\}_{t \in (0, 1)}$ is weakly asymptotically divergence free as $t \to 0^+$.
       \item If $G \in L^2((T^*)^{\otimes 2}(X, \dist, \meas))$ is the $L^2$-weak limit of $T_{t_i}$ for some convergent sequence $t_i \to 0^+$, then $G \in D(\nabla^*)$ with $\nabla^*G=0$. 
    \end{enumerate}
\end{proposition}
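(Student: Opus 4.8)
The plan is to exploit the weak $L^2$-compactness of the bounded family $\{T_t\}$ together with the defining property of the divergence operator $\nabla^*$. Since $L^2((T^*)^{\otimes 2}(X, \dist, \meas))$ is a Hilbert space and $\limsup_{t \to 0^+}\norm{T_t}_{L^2}<\infty$, every sequence $t_i \to 0^+$ admits a subsequence along which $T_{t_i}$ converges weakly in $L^2$. Both implications will be obtained by combining this compactness with the observation that, for a fixed $G \in L^2((T^*)^{\otimes 2}(X, \dist, \meas))$, the linear functional $\omega \mapsto \int_X \langle G, \nabla \omega\rangle \di \meas$ is continuous on $H^{1, 2}_C(T^*(X, \dist, \meas))$, because $\norm{\nabla \omega}_{L^2} \le \norm{\omega}_{H^{1, 2}_C}$ by the very definition of the $W^{1,2}_C$-norm, so that Cauchy--Schwarz gives the bound $\abs{\int_X \langle G, \nabla \omega\rangle \di \meas}\le \norm{G}_{L^2}\norm{\omega}_{H^{1,2}_C}$.

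For the implication from (1) to (2), I would start from a weak limit $G$ of some $T_{t_i}$ and a dense subset $V$ witnessing the weakly asymptotically divergence free property. For every $\omega \in V$, weak convergence gives $\int_X \langle T_{t_i}, \nabla \omega\rangle \di \meas \to \int_X \langle G, \nabla \omega\rangle \di \meas$, while (1) forces the left-hand side to tend to $0$; hence $\int_X \langle G, \nabla \omega\rangle \di \meas = 0$ for all $\omega \in V$. Since $V$ is dense and the functional above is continuous, this vanishing extends to every $\omega \in H^{1, 2}_C(T^*(X, \dist, \meas))$, which is exactly the statement that $G \in D(\nabla^*)$ with $\nabla^* G = 0$.

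For the converse from (2) to (1), I would simply take $V$ to be the whole space $H^{1, 2}_C(T^*(X, \dist, \meas))$ and argue by contradiction. If $\int_X \langle T_t, \nabla \omega_0\rangle \di \meas$ failed to converge to $0$ for some $\omega_0$, there would exist $\delta>0$ and $t_i \to 0^+$ with $\abs{\int_X \langle T_{t_i}, \nabla \omega_0\rangle \di \meas} \ge \delta$ for all $i$. Passing to a weakly convergent subsequence $T_{t_i} \weakto G$ and applying (2), we would obtain $\int_X \langle G, \nabla \omega_0\rangle \di \meas = 0$; but along that same subsequence $\int_X \langle T_{t_i}, \nabla \omega_0\rangle \di \meas \to \int_X \langle G, \nabla \omega_0\rangle \di \meas$, contradicting the lower bound $\delta$. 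Hence $\int_X \langle T_t, \nabla \omega\rangle \di \meas \to 0$ for every $\omega$, proving (1).

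The routine parts are the two density/continuity arguments, which rest on the norm comparison recalled above. The only point requiring a little care is the subsequence bookkeeping in the contradiction argument, namely that the weak limit $G$ extracted after fixing $\omega_0$ really does annihilate that same $\omega_0$; this is automatic since $\nabla \omega_0 \in L^2((T^*)^{\otimes 2}(X, \dist, \meas))$ is a fixed test element for weak convergence. I therefore expect no serious obstacle, the assumed uniform $L^2$-bound and the definition of $\nabla^*$ doing all the work.
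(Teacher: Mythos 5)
Your proposal is correct and follows essentially the same route as the paper: the forward direction passes the vanishing from the dense class $V$ to all of $H^{1,2}_C(T^*(X,\dist,\meas))$ via the continuity of $\omega \mapsto \int_X \langle G, \nabla\omega\rangle \di\meas$, and the converse is the same contradiction argument using $L^2$-weak compactness of the uniformly bounded family and testing against the fixed element $\nabla\omega_0$. The only difference is that you make the continuity estimate and the subsequence extraction explicit, which the paper leaves implicit.
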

 \begin{proof}
   Let us first prove the implication from (1) to (2).
   Assume that $\{T_t\}_{t \in (0, 1)}$ is weakly asymptotically divergence free as $t \to 0^+$.
   Let $V$ be as in Definition \ref{def:asy} and let $t_i, G$ be as in the assumption of (2).
   By definition we have 
     \begin{equation}\label{eq:vanish}
        \int_X\langle G, \nabla \omega \rangle \di \meas=\lim_{i \to \infty}\int_X\langle T_{t_i}, \nabla \omega \rangle \di \meas=0
     \end{equation}
   holds for any $\omega \in V$. Since $V$ is dense in $H^{1, 2}_C(T^*(X, \dist, \meas))$, we have 
     \begin{equation}
        \int_X\langle G, \nabla \omega \rangle \di \meas=0, \quad \forall \omega \in H^{1, 2}_C(T^*(X, \dist, \meas))
     \end{equation}
   which shows $G \in D(\nabla^*)$ with $\nabla^*G=0$.
\\
  Next let us prove the remaining implication. Assume that (2) holds. Let us fix $\omega \in H^{1, 2}_C(T^*(X, \dist, \meas))$.
  If (\ref{eq:convzero}) is not satisfied for this $\omega$, then combining with the $L^2$-weak compactness shows that there exist a convergent sequence $t_i \to 0^+$ and $G \in L^2((T^*)^{\otimes 2}(X, \dist, \meas))$ such that $T_{t_i} \to G$ in the $L^2$-weak topology and 
\begin{equation}
\int_X\langle G, \nabla \omega\rangle \di \meas =\lim_{i \to \infty}\int_X\langle T_{t_i}, \nabla \omega \rangle \di \meas \neq 0
\end{equation}
are satisfied, which contradicts the assumption (2).
\end{proof}

\section{The $L^2$ divergence of the approximate Einstein tensor}\label{sec:stratified}

In this section, we will explain why it is necessary to state the main theorem using the weakly asymptotically divergence free property by giving an example. In fact, we cannot hope that (\ref{eq:targetquotient}) has a limit in a reasonable sense, let alone in $D(\nabla^*)$, more precisely, the $L^2$ convergence of (\ref{eq:targetquotient}) may fail.  To show this we will construct a compact non-collapsed $\RCD(K,3)$ space with $K>1$ such that 
    \begin{equation}
       \left\|\frac{c(3)t^{5/2}g_t-g}{t}\right\|_{L^2}\xrightarrow{t\rightarrow 0^+} +\infty.
    \end{equation}

{
The next proposition is an auxiliary result for our purpose.
Note that an open subset $U \subset X$ 
 is said to be \textit{smooth} if for any $y \in U$ there exist an open subset $y \in V \subset U$ and a (not necessary complete) Riemannian manifold $(M^n, g)$ 
such that there exists an isometry  $\Phi:V \to M^n$ 
 as metric spaces.
}

\begin{proposition}\label{prop:AlmostSmoothBBG} 
    Let $(X,\dist, \mathcal{H}^n)$ be a noncollapsed compact $\RCD(K,n)$ space and let $U\subset X$ be a smooth open subset.
Then 
     \begin{equation}\label{eq:localasymptotics}
     	 \frac{c(n)t^{n+2/2}g_t-g}{t}\to-\frac{2}{3}G^g
     \end{equation}
    holds uniformly on any compact subset of $U$.
\end{proposition}
\begin{proof}
 Fix  $y\in U$ and take a sufficiently small $\epsilon>0$ such that $B_{\epsilon}(y)\subset U$ and that $\partial B_{\epsilon}(y)$ is smooth. With no loss of generality we can assume {that $(B_{\epsilon}(y), \dist)$ is an open ball in a \textit{closed}  Riemanian manifold $(N^n, h)$.} Let $p_{\epsilon}$ be the Dirichlet heat kernel on $B_{\epsilon}(y)$. Thanks to the smoothness of $\partial B_{\epsilon}(y)$, we know that $p_{\epsilon}$ has the continuous extention, denoted $p_{\epsilon}$ again, to $\overline{B}_{\epsilon}(y) \times \overline{B}_{\epsilon}(y) \times (0, \infty)$ such that $p_{\epsilon}(x, z, t)=0$ whenever $x \in \partial B_{\epsilon}(y)$ which is justified by regularity results for parabolic equations on Euclidean balls.
The key point in the proof of (\ref{eq:localasymptotics}) is to show that the global heat kernel $p$ on $X$ and $p_{\epsilon}$ are exponentially close on $B_{\epsilon}(y)$, that is, for sufficiently small $t$,
    \begin{equation}\label{eq:expclose}
        \sup_{x\in B_{\epsilon}(y)}|p(x,y,t)-p_{\epsilon}(x,y,t)|<C(K,N)e^{-\epsilon^2/6t},	
    \end{equation}
  	where $C(K,n)$ denotes a positive constant with dependence on $K$ and $n$. 
Because after establishing (\ref{eq:expclose}), we can easily complete the proof as follows.

\textbf{Step 1.} The restriction of $p$ to $B_{\epsilon}(y) \times B_{\epsilon}(y) \times (0, \infty)$ is smooth and the expansion (\ref{eq:expansion1}) is satisfied in $C^{\infty}(B_{\epsilon}(y))$ (whenever $\epsilon$ is sufficiently small).

We have several proofs of this fact. One way is to apply the elliptic regularity theorem with elliptic estimates (see for instance \cite{GT}) for the $i$-th eigenfunction $\phi_i$, 
then the expansion (\ref{eq:expansion1}) is satisfied in $C^{l}(B_{\epsilon}(y))$ for any $l\ge 1$, namely we have \textbf{Step 1}.

\textbf{Step 2.} We see that $g_t$ is smooth on $B_{\epsilon}(y)$ and that $g_t(v, v)=(\di_Sp)_{(x,x,2t)}(v,v)$ holds for all $x \in B_{\epsilon}(y)$ and $v \in T_xU$.

This is a direct consequence of (\ref{DefPullBack}) and \textbf{Step 1} (see also the beginning of the proof of Theorem \ref{thm:bbgweighted}, namely (\ref{eq:ds})).

\textbf{Step 3.} It holds that on $B_{\epsilon}(y)$,
\begin{equation}\label{Shi}
 \left|\di_S (p-p_{\epsilon})\right|\le Ce^{-\epsilon^2/7t}.
 \end{equation}

This essentially comes from (\ref{eq:expclose}), we postpone the proof to appendix \ref{appShi}.

\textbf{Final step.} We prove (\ref{eq:localasymptotics}).

The proof of the final step is as follows.
Applying also the previous steps above for $(N^n, \dist^h, \mathrm{vol}^h)$ (instead of $(X, \dist, \mathcal{H}^n)$), denoting by $p^h$ the heat kernel of  $(N^n, \dist^h, \mathrm{vol}^h)$, we have
\begin{equation}
\left|\di_S (p-p^h)\right|\le Ce^{-\epsilon^2/7t}
\end{equation}
on $B_{\epsilon}(y)$. Thus Theorem \ref{thm:bbgweighted} for $(N^n, \dist^h, \mathrm{vol}^h)$ (with the proof) implies that (\ref{eq:localasymptotics}) holds.

Finally we know that it is enough to prove (\ref{eq:expclose}). To this end, applying the Gaussian estimates (\ref{eq:gaussian}), together with the maximum principle yields for small $t>0$
   \begin{equation}\label{eq:ii} 
       \begin{split}	
 		  \sup_{x\in B_{\epsilon}(y)}|p(x,y,t)-p_{\epsilon}(x,y,t)|&\le \sup _{\partial B_{\epsilon}(y)\times (0,t]}(p(x,y,s)-p_{\epsilon}(x,y,s))\\
  		       &\le C_1 e^{C_2 t}\sup_{s\in(0,t]}\frac{e^{-\epsilon^2/5s}}{\meas(B_{\sqrt{s}}(y))}\\
                       &\le C_1 C e^{C_2 t}\sup_{s\in(0,t]}\frac{e^{-\epsilon^2/5s}}{s^{n/2}}\\ 
                       &\le C_1 C e^{C_2 t}\frac{e^{-\epsilon^2/5t}}{t^{n/2}} \\
                       &\le C_1 C e^{C_2 t}e^{-\epsilon^2/6t},
        \end{split}                
   \end{equation}
where we used the Bishop-Gromov inequality for $\mathcal{H}^n$ in the third inequality, and a fact that
   the function $\frac{e^{-\epsilon^2/5s}}{s^{n/2}}$ is monotone increasing for $s\in(0,t]$ when $t$ is small enough.
\end{proof}

\begin{example}\label{excounterexample}
Let $(X, \dist)$ be the spherical suspension of $(\mathbb{S}^2(r), \dist_{\mathbb{S}^2(r)})$ for some $r \in (0, 1)$, where $\mathbb{S}^2(r):=\{x \in \mathbb{R}^3; |x|=r\}$ and $\dist_{\mathbb{S}^2(r)}$ denotes the canonical spherical distance. Note that $(X, \dist, \mathcal{H}^3)$ is a non-collapsed $\RCD (r^{-2}+1, 3)$ space because of \cite[Thm.1.1]{Ketterer}, that $(X \setminus \{p_-, p_+\}, \dist)$ is isometric to a smooth Riemannian manifold $(M^3, g)$, where $p_{\pm}$ denote poles, and that 
\begin{equation}\label{iuuu}
\int_{M^3}|\mathrm{Scal}^g|^2\di \mathcal{H}^3=\infty.
\end{equation}

Let us show the $L^2$ divergence of (\ref{eq:targetquotient}) as $t\to 0^+$ in this example.
Proposition \ref{prop:AlmostSmoothBBG} yields 
      \begin{equation}
              \int_{X}\left\langle \frac{c(3)t^{5/2}g_{t}-g}{t}, T\right\rangle \di \mathcal{H}^3\to-\frac{2}{3}\int_X\left\langle G^{g}, T\right\rangle \di \mathcal{H}^3
      \end{equation}
for any tensor $T$ of type $(0, 2)$ with compact support in $X \setminus \{p_{\pm}\}$. In particular 
\begin{equation}\label{eq:lowerb}
\|G^g\|_{L^2(K)}^2=\left| \int_K\left\langle G^{g}, 1_KG^g\right\rangle \di \mathcal{H}^3\right| \le \frac{3}{2}\liminf_{t\to 0^+}\left\|\frac{c(3)t^{5/2}g_{t}-g}{t}\right\|_{L^2} \cdot \|G^g\|_{L^2(K)}
\end{equation}
for any compact subset $K \subset X \setminus \{p_{\pm}\}$. 
Taking the supremum with respect to $K$ in (\ref{eq:lowerb}), we have
      \begin{equation}\label{eq:blowup}
         \|G^{g}\|_{L^2}\le\frac{3}{2}\liminf_{t\to 0^+}\left\|\frac{c(3)t^{5/2}g_{t}-g}{t}\right\|_{L^2}.
      \end{equation}
Since the left hand side of (\ref{eq:blowup}) is $+\infty$ because of
\begin{equation}
\int_{M^3}|G^g|^2\di \mathcal{H}^3 \ge \frac{1}{3}\int_{M^3}|\langle G^g, g\rangle|^2 \di \mathcal{H}^3 =\frac{1}{12}\int_{M^3}|\mathrm{Scal}^g|^2\di \mathcal{H}^3=\infty,
\end{equation}
the divergence of the right hand side of (\ref{eq:blowup}) follows. 
\end{example}


The compactness of $(X,\dist)$ in Theorem \ref{thm:main} plays a crucial role. We give an example to show that Theorem \ref{thm:main} does not hold without the compactness assumption. For this purpose, we need to define $g_t$ for a general, possibly non-compact, $\RCD(K,N)$ space, see \cite[Definition 3.6]{wNCtoNC} and the discussion therein for the details. 
\begin{example}\label{rem:example} 
Denoting by $g_{\mathbb{R}}$ the canonical Riemannian metric on $\mathbb{R}$, let us consider a smooth metric measure space 
\begin{equation}\label{eq:exam}
\left(\setR, \dist^{g_{\mathbb{R}}}, \mathrm{vol}_x^{g_{\mathbb{R}}}\right), \quad \left( \mathrm{vol}_x^{g_\setR}(A)=\int_Ae^{-x}\di x\right).
\end{equation}
Thanks to (\ref{eq:be}), $(\setR, \dist^{g_{\mathbb{R}}}, \mathrm{vol}_x^{g_{\setR}})$ is an $\RCD(-(N-1)^{-1}, N)$ space for any $N>1$. We compute directly the short time expansion of $g_t$. First, it follows from \cite[Lemma 4.7]{Grig2} that the heat kernel $p$ of $\left(\setR, \dist^{g_{\mathbb{R}}}, \mathrm{vol}_x^{g_{\mathbb{R}}}\right)$ is
\begin{equation}
p(x, y, t)=e^{-\frac t4 +\frac{x+y}2}\frac 1{\sqrt {4\pi t}}e^{-\frac{|x-y|^2}{4t}}.	
\end{equation}
Then, we have
\begin{align}
\di_x p&=\left(\frac12 e^{-\frac t4 +\frac{x+y}2}	\frac 1{\sqrt {4\pi t}}e^{-\frac{|x-y|^2}{4t}}+\frac{x-y}{2t}e^{-\frac t4 +\frac{x+y}2}	\frac 1{\sqrt {4\pi t}}e^{-\frac{|x-y|^2}{4t}}\right)\di x\notag\\
         &=\frac 1{2\sqrt {4\pi t}} e^{-\frac t4 +\frac{x+y}2}	e^{-\frac{|x-y|^2}{4t}}\left(1+\frac{x-y}t\right)\di x.
\end{align}
Finally, keeping in mind $\di x\otimes \di x=g_\setR$, we can compute $g_t$ as follows
\begin{align}
g_t&=\int_\setR \di_x p\otimes \di_x p\ e^{-y}\di y\notag\\
   &=\frac 1{16\pi t}e^{-\frac t2 +x}\int_\setR 	e^{-\frac{|x-y|^2}{2t}}\left(1+\frac{x-y}t\right)^2\di y\ g_{\setR}	\notag\\
   &=\frac 1{16\pi t}e^{-\frac t2+x}\left( \sqrt{2\pi t}+\sqrt{\frac{2\pi}t}\right)g_{\setR}.	
\end{align}
Now the desired expansion reads
\begin{align}
	4\sqrt{8\pi}t^{\frac 32}g_t&=(1+t)e^{-\frac t2 +x}g_{\setR}\notag\\
	               &= e^x(1+t)\left(1-\frac t2+O(t^2)\right)g_{\setR}\notag \\
	               &= e^x g_{\setR}+\frac12 e^x g_{\setR}\ t+O(t^2).
\end{align}
Note that this matches with the general formula obtained for closed manifolds, since in this case $f(x)=x$, we have $\di f\otimes\di f=\di x\otimes \di x=g_\setR$, $\Delta^{g_\setR}f=0$ and $|\nabla f|=1$, recall Definition \ref{def:einstein}. Then from Definition \ref{def:weightedadj} and the fact that for unweighted operator, $\nabla^* g=0$, 
we see that
\begin{equation}
\nabla_{x}^*(e^x g_\setR)=	e^x\nabla^*g_\setR-g_\setR(\cdot, e^x\partial_x)+e^x g_\setR(\cdot, \partial_x) =-e^x\di x+e^x\di x=0.
\end{equation}

This computation shows for the $\RCD(-(N-1)^{-1},N)$ space $\left(\setR, \dist^{g_\setR}, \mathrm{vol}_x^{g_\setR}\right)$, the second principal term of $g_t$ is divergence free, nevertheless it carries a non-constant density $e^{-x}$.

\end{example}

\section{Appendices}
\subsection{Spectral analysis on compact $\RCD$ spaces}\label{sec:app}
In this appendix we provide a Rellich type compactness for $1$-forms, Theorem \ref{thm:rellich}, which in particular proves that the space (\ref{eq:testclass}) is dense in $H^{1 2}_C(T^*(X, \dist, \meas))$;
\begin{equation}\label{eq:dense}
\overline{\{\omega \in D(\Delta_{H, 1}); \Delta_{H, 1}\omega \in D(\delta)\}}=H^{1, 2}_C(T^*(X, \dist, \meas))
\end{equation}
Let us mention that $h_{H, t}\omega$ is in (\ref{eq:testclass}) for any $\omega \in L^2(T^*(X, \dist, \meas))$ and any $t>0$, which gives another proof of (\ref{eq:dense}) without the compactness of $(X, \dist)$, where $h_{H, t}$ is the heat flow acting on $L^2(T^*(X, \dist, \meas))$ associated with the energy;
\begin{equation}
\omega \mapsto \frac{1}{2}\int_X(|\di \omega|^2+|\delta \omega|^2)\di \meas,
\end{equation}
as discussed in \cite[(3.6.18)]{Gigli}.
The authors believe that the Rellich type compactness result has an independent interest from the point of view of the spectral analysis on compact $\RCD(K, N)$ spaces, see also \cite{Honda01}.

For the proof, we need several analytic notions, including the local Sobolev spaces $H^{1, p}(U, \dist, \meas)$, the domain of local Laplacian $D(\Delta, U) (\subset H^{1, 2}(U, \dist, \meas))$ with the Laplacian $\Delta_U=\Delta$ for any open subset $U$ of $X$ and so on. We refer \cite{AmbrosioHonda, AmbrosioHonda2, HKST} for the detail.
Let us emphasize that the $\RCD(K, N)$ condition for a metric measure space $(X, \dist, \meas)$ plays an essential role to establish:
\begin{enumerate}
\item{(Good cut-off function, \cite[Lem.3.1]{MondinoNaber})} for any $x \in X$ and all $0<r<R<\infty$, there exists $\phi \in D(\Delta) \cap \mathrm{Lip}_b(X, \dist)$ such that $0\le \phi \le 1$ holds, that $\phi \equiv 1$ holds on $B_r(x)$, that $\supp \phi \subset B_R(x)$ holds, and that $|\nabla \phi| +|\Delta \phi| \le C(K, N, r, R)$ holds for $\meas$-a.e. $x \in X$;
\item{(Hessian estimates for harmonic functions)} For any harmonic function $f$ on $B_R(x) \subset X$ with $|\nabla f| \le L$, that is, $f \in D(\Delta, B_R(x))$ with $\Delta f \equiv 0$, and for any $r<R$, we have
\begin{equation}\label{eq:hessest}
\int_{B_r(x)}|\mathrm{Hess}_f|^2\di \meas \le C(K, N, r, R, L).
\end{equation}
\end{enumerate}
Note that the Hessian of a harmonic function $f$ as above is well-defined as a measurable tensor over $B_R(x)$ because of the locality of the Hessian proved in \cite[Prop.3.3.24]{Gigli}, see also \cite[(1.1)]{BPS}.
The proof of (\ref{eq:hessest}) is easily done by applying (\ref{eq:bochner}) with the good cut-off function constructed in (1).

Finally let us recall a useful notation from the convergence theory; 
\begin{equation}
\Psi (\epsilon_1, \epsilon_2, \ldots, \epsilon_l;c_1, c_2, \ldots, c_m)
\end{equation}
denotes a function $\Psi: (\mathbb{R}_{>0})^l \times \mathbb{R}^m \to (0, \infty)$ satisfying
\begin{equation}
\lim_{(\epsilon_1, \ldots, \epsilon_k) \to 0} \Psi  (\epsilon_1, \epsilon_2, \ldots, \epsilon_l;c_1, c_2, \ldots, c_m)=0, \quad \forall c_i.
\end{equation}
The authors know that the following result is independently obtained in \cite{B} as an application of the heat flow when the paper is finalized.  Our proof is based on $\delta$-splitting maps which is different from that of \cite{B}. 
\begin{theorem}[Rellich compactness]\label{thm:rellich}
Let $(X, \dist, \meas)$ be a compact $\RCD(K, N)$ space. Then the canonical inclusion map:
\begin{equation}\label{eq:inc}
H^{1, 2}_C(T^*(X, \dist, \meas)) \hookrightarrow L^2(T^*(X, \dist, \meas))
\end{equation}
is a compact operator.
\end{theorem}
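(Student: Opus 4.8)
The plan is to prove compactness sequentially: given a sequence $\{\omega_k\}_k$ bounded in $H^{1, 2}_C(T^*(X, \dist, \meas))$, I would extract a subsequence converging in $L^2(T^*(X, \dist, \meas))$. The guiding idea is to reduce the statement for $1$-forms to the already available compactness of the \emph{scalar} inclusion $H^{1,2}(X,\dist,\meas)\hookrightarrow L^2(X,\meas)$, which holds by \cite[Thm.8.1]{HK} since $(X,\dist)$ is compact. The transfer mechanism is provided by $\delta$-splitting maps, which on small balls furnish almost orthonormal local frames of the cotangent module and thus let one encode a $1$-form by its scalar coefficients in such a frame.

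First I would localize. Using the good cut-off functions of item (1) above together with a finite covering of $X$, write $\omega=\sum_j\psi_j\omega$ with $\psi_j\in D(\Delta)\cap\Lip_b(X,\dist)$ supported in balls $U_j=B_{r_j}(x_j)$; since $\nabla(\psi_j\omega)=\di\psi_j\otimes\omega+\psi_j\nabla\omega$, the map $\omega\mapsto\psi_j\omega$ is bounded on $H^{1,2}_C$, so it suffices to prove $L^2$-precompactness of $\{\psi_j\omega_k\}_k$ for each fixed $j$. Because $\meas(X\setminus\mathcal{R}_n)=0$ by (\ref{eq:regular set is full}), for $\meas$-a.e.\ centre and all sufficiently small radii the structure theory supplies a $\delta$-splitting map $u=(u_1,\dots,u_n)\colon U_j\to\mathbb{R}^n$, whose components are harmonic with $|\nabla u_i|\le C$ and whose Hessians are $L^2$-small and well defined by (\ref{eq:hessest}). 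On the subset of $U_j$ where the Gram matrix $G_{il}=\langle\di u_i,\di u_l\rangle$ lies within $\sqrt\delta$ of the identity, which is of almost full measure by Chebyshev, the differentials $\di u_1,\dots,\di u_n$ form a frame and any $1$-form satisfies
\begin{equation}
\eta=\sum_{i,l}(G^{-1})_{il}\,\langle\eta,\di u_l\rangle\,\di u_i.
\end{equation}

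Setting $c^{(k)}_i:=\langle\psi_j\omega_k,\di u_i\rangle$, and noting that $G^{-1}$ and $\di u_i$ are \emph{fixed} elements of $L^\infty$, the $L^2$-precompactness of $\{\psi_j\omega_k\}_k$ reduces to $L^2$-precompactness of the scalar coefficients $\{c^{(k)}_i\}_k$, which I would hope to obtain from the scalar Rellich theorem. The Leibniz rule of \cite{Gigli} gives the pointwise identity
\begin{equation}\label{eq:coeffgrad}
\di c^{(k)}_i=(\nabla(\psi_j\omega_k))(\,\cdot\,,\nabla u_i)+\mathrm{Hess}_{u_i}(\,\cdot\,,(\psi_j\omega_k)^{\sharp}).
\end{equation}
The norms $\|c^{(k)}_i\|_{L^2}$ and the first term on the right are controlled by $\|\omega_k\|_{H^{1,2}_C}$, but the second term is only a product of two $L^2$ functions, hence a priori merely $L^1$. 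This product term is the main obstacle: it prevents a uniform $H^{1,2}$ bound on the $c^{(k)}_i$, so the scalar Rellich theorem cannot be applied naively. Indeed one cannot bypass it by controlling only $|\omega_k|$, since $|\omega_k|$ is bounded in $H^{1,2}(X,\dist,\meas)$ by the Kato inequality $|\nabla|\omega_k||\le|\nabla\omega_k|$ and hence precompact in $L^2$, yet strong convergence of the \emph{norms} does not preclude oscillation of the \emph{directions}; it is precisely this directional information that the frame coefficients must capture.

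To overcome the obstacle I would exploit smallness rather than boundedness of the Hessian term. The Kato inequality together with the Sobolev inequality on $\RCD(K,N)$ spaces places $|\omega_k|$ in $L^{2^{\ast}}$, $2^{\ast}=2N/(N-2)$, uniformly in $k$, while the $\delta$-splitting condition makes $\|\mathrm{Hess}_{u_i}\|_{L^2(U_j)}$ of order $\sqrt\delta$; consequently the troublesome term in (\ref{eq:coeffgrad}) is small in an $L^q$ norm with $q=N/(N-1)$, uniformly in $k$. Running the construction along a sequence $\delta_m\to0$, so that the gradients $\di c^{(k)}_i$ split into an $L^2$-bounded part and a part whose size is controlled by $\sqrt{\delta_m}$, and combining the scalar Rellich compactness of the bounded parts with this uniform smallness through a diagonal extraction, should yield an $L^2$-convergent subsequence of $\{c^{(k)}_i\}_k$; reassembling over the finite cover and the frame indices then produces the desired convergent subsequence of $\{\omega_k\}_k$. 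The two delicate points requiring careful verification are the quantitative control of the Hessian error as $\delta\to0$, where one may need the improved higher integrability of Hessians of $\delta$-splitting maps in addition to the Sobolev regularity of $|\omega_k|$, and the stability of the reconstruction through the near-identity Gram matrix on the exceptional low-measure sets.
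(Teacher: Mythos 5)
Your skeleton --- localize, use harmonic $\delta$-splitting maps as an almost orthonormal frame, and reduce compactness of $1$-forms to compactness of the scalar coefficients $\langle\omega_k,\di u_i\rangle$ --- is essentially the strategy of the paper, and you have correctly isolated the crux: the gradient of a coefficient contains the term $\mathrm{Hess}_{u_i}(\cdot,\omega_k^{\sharp})$, which is only a product of two $L^2$ objects, so the coefficients are not bounded in $H^{1,2}$. The gap is in your proposed fix. Splitting $\di c^{(k)}_i$ into an $L^2$-bounded part and a part that is small in $L^{N/(N-1)}$ does not yield a corresponding splitting of the functions $c^{(k)}_i$ themselves: the $L^2$-bounded piece of the gradient is not the differential of any controlled function, so there is nothing to which ``the scalar Rellich compactness of the bounded parts'' can be applied, and smallness of one summand of a gradient does not place the sequence near an $H^{1,2}$-bounded set. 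The diagonal extraction over $\delta_m\to 0$ therefore has no precompact sets to diagonalize against at the coefficient level. (Smallness of the Hessians is genuinely needed elsewhere --- to make the frame almost orthonormal and the bad set small --- but it is not what rescues the coefficient compactness.)

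The missing ingredient is the $p=1$ version of the Rellich--Kondrachov theorem on PI spaces: \cite[Thm.8.1]{HK} applies to sequences bounded in $H^{1,1}$, not only in $H^{1,2}$. This is how the paper proceeds: $\langle\di\phi_{r,x,j},\omega_i\rangle$ is uniformly bounded in $H^{1,1}(B_r(x),\dist,\meas)$ (Cauchy--Schwarz on the Hessian term, using (\ref{eq:hessest})), hence precompact in $L^p$ for $p<p_N=N/(N-1)$; this is then upgraded to $L^2$-strong convergence by interpolation against the uniform bound $\sup_i\||\omega_i|^2\|_{L^{p_N}}<\infty$, which follows from $|\omega_i|^2\in H^{1,1}$ with $|\nabla|\omega_i|^2|\le 2|\nabla\omega_i||\omega_i|$ and the Sobolev embedding --- the same higher integrability you invoke, but used for the upgrade rather than for smallness. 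A further, minor difference: the paper never inverts the Gram matrix or reconstructs $\omega_k$ from its coefficients; it extracts an $L^2$-weak limit $\omega$ and proves $\limsup_i\|\omega_i\|_{L^2}\le\|\omega\|_{L^2}$ by comparing $|\omega_i|^2$ with $\sum_l\langle\di\phi_{r,x,l},\omega_i\rangle^2$ on the good sets of a finite Vitali subcover of $\mathcal{R}_n$, which sidesteps the reconstruction-stability issue you flag at the end.
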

\begin{proof}
With no loss of generality we can assume that $\meas (X)=1$ and $N>1$.
Let $\omega_i$ be a bounded sequence in $H^{1, 2}_C(T^*(X, \dist, \meas))$.
By the $L^2$-weak compactness with no loss of generality we can assume that $\omega_i$ $L^2$-weakly converge to some $\omega \in L^2(T^*(X, \dist, \meas))$. Our goal is to prove that this is an $L^2$-strong convergence.

Let us remark that thanks to \cite[Prop.3.4.6]{Gigli} (recall that for any $\omega \in L^2(T^*(X, \dist, \meas))$, $\omega \in W^{1, 2}_C(T^*(X, \dist, \meas)$ holds if and only if $\omega^{\sharp} \in W^{1, 2}_C(T(X, \dist, \meas))$ holds), we have $|\omega_i|^2 \in H^{1, 1}(X, \dist, \meas)$ with $|\nabla |\omega_i|^2| \le 2|\nabla \omega_i| |\omega_i|$ for $\meas$-a.e. $x \in X$.
In particular the Sobolev embedding theorem proved in \cite[Thm.5.1]{HK} yields
\begin{equation}\label{eq:lpbd}
\sup_i\||\omega_i|^2\|_{L^{p_N}}<\infty,
\end{equation}
where $p_N:=N/(N-1)$ because a Poincar\'e inequality is satisfied \cite[Thm.1]{Rajala}, and the Bishop-Gromov inequality implies the inequality $\meas (B_s(y)) \ge C(s/r)^N\meas (B_r(x))$ for all $x \in X$, $y \in B_r(x)$ and $s \in (0, r]$, see \cite[(21)]{HK}.

Fix $\epsilon>0$ and put $n:=\mathrm{dim}_{\dist, \meas}(X)$.  For any $x \in \mathcal{R}_n$ there exists $r_x>0$ such that for any $r \in (0, r_x)$ there exists a harmonic map $\Phi_{r, x}=(\phi_{r, x, 1}, \phi_{r, x, 2},\ldots, \phi_{r, x, n}):B_{2r}(x) \to \mathbb{R}^n$ (that is, each $\phi_{r, x, i}$ is a harmonic function on $B_{2r}(x)$) such that $|\nabla \phi_{r, x, i}| \le C(K, N)$ holds for any $i$, that 
\begin{equation}\label{eq:l2est}
\frac{1}{\meas (B_{2r}(x))}\int_{B_{2r}(x)}\left|\langle\nabla \phi_{r, x, i}, \nabla \phi_{r, x, j}\rangle-\delta_{ij}\right|\di \meas + \frac{r^2}{\meas (B_{2r}(x))}\int_{B_{2r}(x)}|\mathrm{Hess}_{\phi_{r, x, i}}|^2\di \meas \le \epsilon
\end{equation}
holds for all $i, j$ (see \cite[Prop.1.4]{BPS}). Note that the $L^2$-weak convergence of $\omega_i$ to $\omega$ yields that $\langle \di \phi_{r, x, j}, \omega_i\rangle$ $L^2$-weakly converge to $\langle \di \phi_{r, x, j}, \omega\rangle$ on $B_{2r}(x)$ for any $j$.

On the other hand applying \cite[Prop.3.4.6]{Gigli} (with a good cut-off function as above) again yields $\langle \di \phi_{r, x, j}, \omega_i\rangle \in H^{1, 1}(B_r(x), \dist, \meas)$ with 
\begin{equation}
|\nabla \langle \di \phi_{r, x, j}, \omega\rangle |\le |\mathrm{Hess}_{\phi_{r, x, j}}||\omega_i| + |\nabla \phi_{r, x, j}| |\nabla \omega_i|, \quad \mathrm{for}\,\meas-\mathrm{a.e.}\,x\in B_r(x).
\end{equation} 
{For the reader's convenience}, let us provide a proof of the above. Take $\phi \in D(\Delta) \cap \mathrm{Lip}_b(X, \dist)$ such that $0 \le \phi \le 1$ holds, that $\phi \equiv 1$ holds on $B_r(x)$, that $\supp \phi \subset B_{2r}(x)$ and that $|\nabla \phi|+|\Delta f| \le C(K, N, r)$ holds for $\meas$-a.e. $x \in X$. Then since $\phi \phi_{r, x, j} \in D(\Delta) \cap \mathrm{Lip}_b(X, \dist)$, applying \cite[Prop.3.4.6]{Gigli}  yields 
$\langle \di (\phi\phi_{r, x, j}), \omega_i\rangle \in H^{1, 1}(X, \dist, \meas)$ with 
\begin{equation*}
|\nabla \langle \di (\phi \phi_{r, x, j}), \omega\rangle |\le |\mathrm{Hess}_{\phi \phi_{r, x, j}}||\omega_i| + |\nabla \phi_{r, x, j}| |\nabla (\phi \omega_i)|, \quad \mathrm{for}\,\meas-\mathrm{a.e.}\,x\in X.
\end{equation*} 
Restricting this observation to $B_r(x)$ with the locality properties of the gradient (for instance \cite[Thm.2.2.6]{Gigli}) and of the Hessian \cite[Prop.3.3.24]{Gigli} proves the desired statement.

In particular  (\ref{eq:hessest}) shows
\begin{equation}
\sup_i\|\langle \di \phi_{r, x, j}, \omega_i\rangle \|_{H^{1, 1}(B_r(x), \dist, \meas)}<\infty.
\end{equation}
Therefore applying the Rellich compactness theorem for $H^{1, 1}$-functions proved in \cite[Thm.8.1]{HK} shows that $\langle \di \phi_{r, x, j}, \omega_i\rangle$ $L^p$-strongly converge to $\langle \di \phi_{r, x, j}, \omega\rangle$ on $B_r(x)$ for all $p \in [1, p_N)$. By (\ref{eq:lpbd}) we see that $\langle \di \phi_{r, x, j}, \omega_i\rangle$ $L^2$-strongly converge to $\langle \di \phi_{r, x, j}, \omega\rangle$ on $B_r(x)$ for any $j$.

Let 
\begin{equation}
A(r, x):=\left\{y \in B_r(x);|\langle \nabla \phi_{r, x, i}, \nabla \phi_{r, x, j}\rangle(y)-\delta_{ij}| \le \epsilon^{1/2},\forall i,\,\forall j\right\}.
\end{equation}
Then the Markov inequality with (\ref{eq:l2est}) shows
\begin{equation}\label{eq:markov}
\frac{\meas (B_r(x)\setminus A(r, x))}{\meas(B_r(x))}\le \epsilon^{1/2}.
\end{equation}
Note that for any $\eta \in L^2(T^*(X, \dist, \meas))$ 
\begin{equation}
\left| |\eta|^2(y) -\sum_j\langle \di \phi_{r, x, j}, \eta \rangle^2(y)\right|\le \Psi \left(\epsilon ; n\right)|\eta|^2, \quad \mathrm{for}\,\mathrm{a.e.}\,y\in A(r, x).
\end{equation}
See also \cite[(5.36) and (5.37)]{AHPT}.
Applying the Vitali covering theorem to a family $\mathcal{F}:=\{\overline{B}_r(x)\}_{x \in \mathcal{R}_n, r<r_x}$ yields that there exists a pairwise disjoint subfamily $\{\overline{B}_{r_j}(x_j)\}_{j \in \mathbb{N}}$ of $\mathcal{F}$ such that 
\begin{equation}
\mathcal{R}_n\setminus \bigsqcup_{j=1}^k\overline{B}_{r_j}(x_j) \subset \bigcup_{j \ge k+1}\overline{B}_{5r_j}(x_j), \quad \forall k
\end{equation}
holds. Take $k_0$ with $\sum_{j\ge k_0+1}\meas (B_{r_j}(x_j))<\epsilon$. Then by (\ref{eq:markov}) we have
\begin{align}
\meas \left(X \setminus \bigsqcup_{j=1}^{k_0}A(r_j, x_j)\right) &\le \meas \left(X \setminus \bigsqcup_{j=1}^{k_0}B_{r_j} (x_j)\right) + \sum_{j=1}^{k_0}\meas (B_{r_j}(x_j) \setminus A(r_j, x_j)) \nonumber \\
&\le \sum_{j\ge k_0+1}\meas (B_{5r_j}(x_j)) + \epsilon^{1/2}\sum_{j=1}^{k_0}\meas (B_{r_j}(x_j)) \nonumber \\
&\le C(K, N)\sum_{j \ge k_0+1}\meas (B_{r_j}(x_j)) +\epsilon^{1/2}\nonumber \\
&\le \Psi(\epsilon; K, N).
\end{align}
Thus for any sufficiently large $i$ we have
\begin{align}\label{al:vitali}
&\int_{X}|\omega_i|^2\di \meas \nonumber \\
&=\sum_{j=1}^{k_0}\int_{A(r_j, x_j)}|\omega_i|^2 \di \meas +\int_{X \setminus \bigsqcup_{j=1}^{k_0}A(r_j, x_j)}|\omega_i|^2\di \meas \nonumber \\
&\le \sum_{j=1}^{k_0}\sum_{l=1}^n\int_{A(r_j, x_j)}(\langle \di \phi_{r_j, x_j, l}, \omega_i\rangle^2 +\Psi (\epsilon; n)|\omega_i|^2) \di \meas+\meas \left(X\setminus \bigsqcup_{j=1}^{k_0}A(r_j, x_j)\right)^{1/q_N}\||\omega_i|^2\|_{L^{p_N}} \nonumber \\
&\le \sum_{j=1}^{k_0}\sum_{l=1}^n\int_{A(r_j, x_j)}\langle \di \phi_{r_j, x_j, l}, \omega \rangle^2 \di \meas+\Psi (\epsilon; n) \sup_m\|\omega_m\|_{L^2}^2+\Psi(\epsilon; K, N)\sup_m\||\omega_m|^2\|_{L^{p_N}} \nonumber \\
&\le  \sum_{j=1}^{k_0}\sum_{l=1}^n\int_{A(r_j, x_j)}(1+\Psi(\epsilon; n))|\omega|^2 \di \meas +\Psi(\epsilon; K, N)( \sup_m\|\omega_m\|_{L^2}^2+ \sup_m\||\omega_m|^2\|_{L^{p_N}}) \nonumber \\
&\le \int_X|\omega|^2 \di \meas +\Psi(\epsilon; K, N)( \sup_m\|\omega_m\|_{L^2}^2+ \sup_m\||\omega_m|^2\|_{L^{p_N}}),
\end{align}
where $q_N$ is the conjugate exponent of $p_N$. Since $\epsilon$ is arbitrary, (\ref{al:vitali}) shows that 
\begin{equation}
\limsup_{i \to \infty}\int_X|\omega_i|^2\di \meas \le \int_X|\omega|^2\di \meas
\end{equation}
which completes the proof of the $L^2$-strong convergence of $\omega_i$ to $\omega$.
\end{proof}

The following corollary is a direct consequence of Corollary \ref{cor:equiv} and Theorem \ref{thm:rellich}  (see for instance the appendix of \cite{Honda0}).
\begin{corollary}\label{cor:spec}
The spectrum of the Hodge Laplacian $\Delta_{H, 1}$ acting on $1$-forms is discrete and unbounded. If we denote the spectrum by
\begin{equation}
0\le \lambda_{(H, 1), 1} \le \lambda_{(H, 1), 2} \le \lambda_{(H, 1), 3} \le \cdots \le \lambda_{(H, 1), k} \le \cdots \to \infty
\end{equation}
counted with multiplicities, then corresponding eigen-$1$-forms $\omega_1, \omega_2, \ldots$ with $\|\omega_k\|_{L^2}=1$ give an orthogonal basis of $L^2(T^*(X, \dist, \meas))$.
\end{corollary}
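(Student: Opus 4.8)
The plan is to realize $\Delta_{H,1}$ as the non-negative self-adjoint operator associated with the closed quadratic form $Q(\omega)=\int_X(|\di\omega|^2+|\delta\omega|^2)\di\meas$ whose form domain is $H^{1,2}_H(T^*(X,\dist,\meas))$ (Definitions \ref{def:sob2} and \ref{def:hodge}), and then to deduce the whole statement from the abstract spectral theorem for such operators once the form domain is known to embed compactly into $L^2(T^*(X,\dist,\meas))$. In this way the corollary reduces entirely to the compactness of the inclusion $H^{1,2}_H(T^*(X,\dist,\meas))\hookrightarrow L^2(T^*(X,\dist,\meas))$; everything else (discreteness, non-negativity, the divergence $\lambda_{(H,1),k}\to\infty$, and the orthonormal eigenbasis) is the standard packaging recalled in the appendix of \cite{Honda0}.

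First I would establish the compact embedding. By (\ref{eq:boch}) one has $H^{1,2}_H(T^*(X,\dist,\meas))\subseteq H^{1,2}_C(T^*(X,\dist,\meas))$, and estimating $-K\le K^-$ in (\ref{eq:boch}) gives the norm control $\|\omega\|_{H^{1,2}_C}\le (1+K^-)\|\omega\|_{H^{1,2}_H}$ (the upper inequality of Corollary \ref{cor:equiv}, valid on any compact $\RCD(K,N)$ space since it uses only the Bochner-type estimate). Hence any sequence bounded in $H^{1,2}_H$ is bounded in $H^{1,2}_C$, and Theorem \ref{thm:rellich} supplies an $L^2$-convergent subsequence; therefore $H^{1,2}_H(T^*(X,\dist,\meas))\hookrightarrow L^2(T^*(X,\dist,\meas))$ is compact. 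Equivalently, $(\Delta_{H,1}+1)^{-1}$, which maps $L^2(T^*(X,\dist,\meas))$ boundedly into the form domain $H^{1,2}_H(T^*(X,\dist,\meas))$ by the definition of the form, is a compact operator on $L^2(T^*(X,\dist,\meas))$.

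Next I would invoke the spectral theorem for the compact self-adjoint operator $(\Delta_{H,1}+1)^{-1}$: its eigenvalues accumulate only at $0$, each nonzero one has finite multiplicity, and the corresponding eigenvectors form a complete orthonormal system of $L^2(T^*(X,\dist,\meas))$. Translating back to $\Delta_{H,1}$ yields a discrete spectrum with finite multiplicities, an orthogonal eigenbasis of unit eigen-$1$-forms $\omega_1,\omega_2,\ldots$, and $\lambda_{(H,1),k}\to\infty$ (an accumulation point would force infinitely many unit eigenforms bounded in $H^{1,2}_H$ with no $L^2$-convergent subsequence, contradicting compactness). Non-negativity of each $\lambda_{(H,1),k}$ is immediate from $Q\ge 0$. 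I do not anticipate a genuine obstacle here: the analytic core—the Rellich-type compactness—is already carried out in Theorem \ref{thm:rellich}, and the only point requiring a little care is to confirm that it is the form norm $\|\cdot\|_{H^{1,2}_H}$, rather than the a priori larger $\|\cdot\|_{H^{1,2}_C}$, that governs compactness; this is precisely the role played by (\ref{eq:boch}) in the second step.
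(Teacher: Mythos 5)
Your proposal is correct and follows essentially the same route as the paper, which derives the corollary directly from the Rellich compactness of Theorem \ref{thm:rellich} together with the norm comparison between $\|\cdot\|_{H^{1,2}_H}$ and $\|\cdot\|_{H^{1,2}_C}$ coming from (\ref{eq:boch}), and then invokes the standard spectral theory for the self-adjoint operator associated with a closed form with compactly embedded domain (citing the appendix of \cite{Honda0}). Your observation that only the one-sided inequality $\|\omega\|_{H^{1,2}_C}\le (1+K^-)\|\omega\|_{H^{1,2}_H}$ is needed, so that the argument works on any compact $\RCD(K,N)$ space and not only in the non-collapsed case covered by Corollary \ref{cor:equiv}, is accurate and consistent with the paper's intent.
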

\begin{remark}\label{rem:expansion}
Under the same notation as in Corollary \ref{cor:spec}, it is easy to see that for any $\omega \in H^{1, 2}_H(T^*(X, \dist, \meas))$,
\begin{equation}\label{eq:h12expansion}
\omega=\sum_i\left(\int_X\langle \omega, \omega_i\rangle \di \meas\right) \omega_i
\end{equation}
in $H^{1, 2}_H(T^*(X, \dist, \meas))$. In particular (\ref{eq:h12expansion}) also holds in $H^{1, 2}_C(T^*(X, \dist, \meas))$ because of (\ref{eq:boch}).
\end{remark}
\begin{remark}
As an immediate consequence of Theorem \ref{thm:rellich}, we are able to prove a similar spectral decomposition result as in Corollary \ref{cor:spec} for the \textit{connection Laplacian} $\Delta_{C, 1}$ acting on $1$-forms. Moreover the technique provided in the proof of Theorem \ref{thm:rellich} allows us to prove similar decomposition results for the connection Laplacians acting on differential forms and tensor fields of any type. Compare with \cite{Honda01, Honda02}.  
\end{remark}

\subsection{Proof of (\ref{Shi})}\label{appShi}

   In order to complete the proof of Proposition \ref{prop:AlmostSmoothBBG}, we recall the following local derivative estimates which are well-known. 
For our purpose, it is enough to consider the case when the total space is complete because we recall that $B_{\epsilon}(y)$ appeared in the proof of Proposition \ref{prop:AlmostSmoothBBG} is actually an open subset of a closed Riemannian manifold $(N^n, h)$.
   \begin{lemma}\label{lem:loc_deri_esti}
   Let $(U^n, g)$ be an $n$-dimensional complete Riemannian manifold with $\mathrm{Ric}^g\ge -K g$ for some $K>0$, and let $u(x,t)$ be a smooth solution to the heat equation on $B_{2r}(p)\times (0,T]$, for some $p \in U^n$, $0<r \le 1$ and $T>0$. Then 
   \begin{equation}\label{eq:loc_deri_esti}
   |\nabla u|^2(x, t)\le C_n\|u\|_{L^\infty(B_{2r}(p)\times (0, T])}^2\left(\frac{1}{r^{2}}+\frac{1}{t}+K\right)	\quad \forall x\in B_r(p),\text{ $\forall t\in (0,T]$.}
   \end{equation}
   \end{lemma}
\begin{proof}

This is a direct consequence of a result of Souplet-Zhang in \cite[Theorem 1.1]{SZ} which states that if $u$ is a positive solution of the heat equation on $B_{2r}(p)\times (0, T)$ with $u \le L$, then
\begin{equation}\label{eq:SZ}
\frac{|\nabla u|^2}{u^2}\le C_n\left(\frac{1}{r^2}+\frac{1}{t}+K\right) \left(1+\log \frac{L}{u}\right),\quad \text{on $B_r(p)\times [T/2, T)$}.
\end{equation}
Because in our setting, letting $M=\|u\|_{L^\infty(B_{2r}(p)\times (0,T])}$ and, without loss of generality, we can assume that $M>0$. Consider a positive solution $u+2M$ of the heat equation on $B_{2r}(p)\times (0,T]$. For any $t \in (0, T)$, finding $T_0 \in (0, T)$ with $t \in (\frac{T_0}{2}, T_0)$ and then applying (\ref{eq:SZ}) for this solution on $B_{2r}(p)\times (\frac{T_0}{2}, T_0)$ show (\ref{eq:loc_deri_esti}) because of $M \le u+2M \le 3M$.
\end{proof}

Let us return to the proof of (\ref{Shi}). For any $V\in T_y U$, consider a smooth function $u(x,t)=g(\nabla_y (p-p_{f,\epsilon}), V)$. Observe that $u(x,t)$ satisfies the heat equation since
\begin{equation}
	\frac{\partial}{\partial t}u=g(\nabla_y \Delta^g_x(p-p_{f,\epsilon}), V)=\Delta^g_xg(\nabla_y (p-p_{f,\epsilon}), V)=\Delta^g u,
\end{equation}
where we used 
a fact that $V$ is independent of $x$. We then apply Lemma \ref{lem:loc_deri_esti} twice to derive that for fixed $y$, take any $x\in B_{\epsilon/4}(y)$ and then take $t$ small enough, we have 
 \begin{equation}
 	|\nabla u|(x, t)\le \frac{C}{\sqrt{t}}\|\nabla_y p-\nabla_y p_{f,\epsilon}\|_{L^{\infty}(B_{\epsilon/2}(y))}|V| \le \frac Ct \|p-p_{f,\epsilon}\|_{L^{\infty}(B_{\epsilon}(y))}|V|\le \frac Ct e^{-\epsilon^2/6t}|V|.
 \end{equation}
 Then considering the case when $x=y$, for any $W\in T_y U$, 
we get 
 \begin{align}
 | \di_S (p-p_{f,\epsilon})(W,V)|\le |W||\nabla u|(y, t)
 \le \frac Ct e^{-\epsilon^2/6t}|V||W|.
 \end{align}

  Since $V,W$ is arbitrary, we have 
 \begin{equation}
 \left|\di_S (p-p_{f,\epsilon})\right|\le \frac Cte^{Ct}e^{-\epsilon^2/6t}	\le Ce^{-\epsilon^2/7t}	
 \end{equation}
which completes the proof of (\ref{Shi}).

    


\begin{thebibliography}{}




\bibitem[AGS14b]{AmbrosioGigliSavare14}
	\textsc{L. Ambrosio, N. Gigli, G. Savar\'e}:
	\textit{Metric measure spaces with Riemannian Ricci curvature bounded from below},
	Duke Math. J. \textbf{163} (2014), 1405--1490.
 
\bibitem[AH17]{AmbrosioHonda}
	\textsc{L. Ambrosio, S. Honda}:
	\textit{New stability results for sequences of metric measure spaces with uniform Ricci bounds from below}, in
	 Measure Theory in Non-Smooth Spaces, 1--51, De Gruyter Open, Warsaw, 2017. 

\bibitem[AH18]{AmbrosioHonda2}
	\textsc{L. Ambrosio, S. Honda}:
	\textit{Local spectral convergence in $\RCD^*(K, N)$ spaces}, Nonlinear Anal. \textbf{177} Part A (2018), 1--23.

\bibitem[AHPT21]{AHPT}
	\textsc{L. Ambrosio, S. Honda, J. W. Portegies, D. Tewodrose}:
	\textit{Embedding of $\RCD^*(K, N)$-spaces in $L^2$ via eigenfunctions},
         	 J. Funct. Anal. \textbf{280} (2021), no. 10, Paper No. 108968, 72 pp.

\bibitem[AHT18]{AmbrosioHondaTewodrose}
	\textsc{L. Ambrosio, S. Honda, D. Tewodrose}:
	\textit{Short-time behavior of the heat kernel and Weyl's law on $\RCD^*(K, N)$-spaces},
         Ann. Glob. Anal. Geom. \textbf{53}(1) (2018), 97-119.

{\bibitem[AMS16]{AmbrosioMondinoSavare2}
        \textsc{L. Ambrosio, A. Mondino, G. Savar\'e}:
\textit{On the Bakry-\'Emery Condition, the Gradient Estimates and the Local-to-Global Property of $\RCD^*(K,N)$ Metric Measure Spaces},  J. Geom. Anal. \textbf{26} (2016), 24-56.}

\bibitem[AMS19]{AmbrosioMondinoSavare}
        \textsc{L. Ambrosio, A. Mondino, G. Savar\'e}:
\textit{Nonlinear diffusion equations and curvature conditions in metric measure spaces},  Mem. Amer. Math. Soc. \textbf{262} (2019), no. 1270.
	
        

		
\bibitem[BBG94]{BerardBessonGallot}
          \textsc{P. B\'erard, G. Besson, S. Gallot}:
          \textit{Embedding Riemannian manifolds by their heat kernel},
          Geom. Funct. Anal. \textbf{4}(4) (1994), 373--398.

\bibitem[B20]{B}
          \textsc{M. Braun}:
          \textit{Heat flow on $1$-forms under lower Ricci bounds. Functional inequalities, spectral theory, and heat kernel},
          arXiv:2010.01849v2 [math.FA]

\bibitem[BGHZ21]{wNCtoNC}
        \textsc{C. Brena, N. Gigli, S. Honda, X. Zhu}:		
		\textit{Weakly non-collapsed RCD spaces are strongly non-collapsed},
			arXiv:2110.02420 [math.DG] (2021).

\bibitem[BPS20]{BPS}
          \textsc{E. Bru\'e, E. Pasqualetto, D. Semola}:
          \textit{Rectifiability of $\RCD(K, N)$ spaces via $\delta$-splitting maps},
          arXiv:2001.07911 [math.MG] (2020).
          
\bibitem[BS20]{BrueSemola}
          \textsc{E. Bru\'e, D. Semola}:
          \textit{Constancy of dimension for $\RCD(K,N)$ spaces via regularity of Lagrangian flows},
          Comm. Pure and Appl. Math. \textbf{73} (2020), 1141--1204.

\bibitem[BBI01]{BBI}
          \textsc{D. Burago, Y. Burago, S. Ivanov}:
          \textit{A course in metric geometry},
          Graduate Studies in Mathematics, 33. American Mathematical Society, Providence, RI, 2001.



\bibitem[C84]{Chavel}
         \textsc{I. Chavel}: \textit{Eigenvalues in Riemannian geometry}, 
         Including a chapter by Burton Randol. With an appendix by Jozef Dodziuk. Pure and Applied Mathematics, \textbf{115}. Academic Press, Inc., Orlando, FL, (1984). 
          
\bibitem[Ch99]{Cheeger}
         \textsc{J. Cheeger}: \textit{Differentiability of Lipschitz functions on metric measure spaces}, 
         Geom. Funct. Anal. \textbf{9} (1999), 428--517.


         
\bibitem[CY81]{CheegerYau}
		\textsc{J. Cheeger, S.-T. Yau}:
		\textit{A lower bound for the heat kernel}, 
		Comm. Pure Appl. Math., \textbf{34} (1981) 465-480.         




\bibitem[D90]{Davies2}
         \textsc{E. B. Davies}:
         \textit{Heat kernels and spectral theory},  Cambridge Tracts in Mathematics, 92. Cambridge
University Press, Cambridge, 1990.



\bibitem[DePhG18]{DePhillippisGigli}
         \textsc{G. De Philippis, N. Gigli}:
         \textit{Non-collapsed spaces with Ricci curvature bounded below},
         Journal de l'\'Ecole polytechnique, \textbf{5} (2018), 613--650.



\bibitem[EKS15]{ErbarKuwadaSturm}
        \textsc{M. Erbar, K. Kuwada, K.-T. Sturm}:
        \textit{On the equivalence of the entropic curvature-dimension condition and Bochner's inequality on metric measure spaces},
        Invent. Math. \textbf{201} (2015), 993--1071. 


\bibitem[GM14]{GarofaloMondino}
        \textsc{N. Garofalo, A. Mondino}:
        \textit{Li-Yau and Harnack type inequalities in $\RCD^*(K, N)$ metric
measure spaces},
         Nonlinear Analysis {\bf 95} (2014), 721--734.

\bibitem[G13]{Gigli13}
        \textsc{N. Gigli}:
        \textit{The splitting theorem in non-smooth context},
	arXiv:1302.5555 [math.MG] (2013).

\bibitem[G15]{Gigli1}
        \textsc{N. Gigli}:
        \textit{On the differential structure of metric measure spaces and applications},
      Mem. Amer. Math. Soc. \textbf{236} (2015), no. 1113.  
	
\bibitem[G18]{Gigli}
        \textsc{N. Gigli}:
        \textit{Nonsmooth differential geometry --
An approach tailored for spaces with Ricci curvature bounded from below},
         Mem. Amer. Math. Soc. \textbf{251} (2018), no. 1196.


{
\bibitem[GMS13]{GigliMondinoSavare13}
	\textsc{N. Gigli, A. Mondino, G. Savar\'e}:
	\textit{Convergence of pointed non-compact metric measure spaces and stability of Ricci curvature bounds and heat flows},
	Proceedings of the London Mathematical Society \textbf{111} (2015), 1071--1129. 
}



\bibitem[GT01]{GT}
\textsc{D. Gilbarg, N. S. Trudinger}:
\textit{Elliptic Partial Differential Equations of Second Order},
Springer-Verlag GmbH Germany, part of Springer Nature 2001.

\bibitem[G06]{Grig2}
  \textsc{A. Grigor'yan}: 
  \textit{Heat kernels on weighted manifolds and applications}, Cont. Math, \textbf{398} (2006), 93--191

\bibitem[G09]{Grig}
        \textsc{A. Grigor'yan}:
        \textit{Heat kernel and analysis on manifolds},
 	AMS/IP Studies in Advanced Mathematics, 47. American Mathematical Society, Providence, RI, International Press, Boston, MA, 2009.

\bibitem[HK00]{HK}
       \textsc{P. Hajlasz, P. Koskela}:
        \textit{Sobolev met Poincar\'e.} 
        Mem. Amer. Math. Soc., \textbf{145} (2000), no. 688.

\bibitem[Han18]{Han}
\textsc{B.-X. Han},
\textit{Ricci tensor on $\RCD^*(K, N)$ spaces}. J. Geom. Anal \textbf{28} (2018), 1295--1314.



        
\bibitem[HKST15]{HKST}
        \textsc{J. Heinonen, P. Koskela, N. Shanmugalingam, J. Tyson}:
        \textit{Sobolev spaces on metric measure spaces},
       volume 27 of New Mathematical
Monographs. Cambridge University Press, Cambridge, 2015.

\bibitem[Hon17]{Honda01}
        \textsc{S. Honda}:
        \textit{Spectral convergence under bounded Ricci curvature} J. Funct. Anal. \textbf{273} (2017), 1577--1662.

\bibitem[Hon18a]{Honda0}
        \textsc{S. Honda}:
        \textit{Bakry-\'Emery conditions on almost smooth metric meaasure space} Anal. Geome. Metr. Spaces. \textbf{6} (2018), 129--145.

\bibitem[Hon18b]{Honda02}
        \textsc{S. Honda}:
        \textit{Elliptic PDEs on compact Ricci limit spaces and applications} Mem. Amer. Math. Soc. \textbf{253} (2018), 1211.

\bibitem[Hon20a]{Honda19}
        \textsc{S. Honda}:
        \textit{New differential operator and non-collapsed $\RCD$ spaces},
       Geom. Topol. \textbf{24} (2020), no. 4, 2127--2148.

\bibitem[Hon20b]{Honda20}
        \textsc{S. Honda}:
        \textit{Isometric immersions of $\RCD$ spaces},
       	arXiv:2005.01222v3 [math.DG], to apperar in Comment. Math. Helv.






\bibitem[J15]{Jiang15}
        \textsc{R. Jiang}:
        \textit{The Li-Yau Inequality and Heat Kernels on Metric Measure Spaces},
        J. Math. Pures Appl. {\bf 104} (2015), 29--57.

\bibitem[JLZ16]{JiangLiZhang}
        \textsc{R. Jiang, H. Li, and H.-C. Zhang}:
        \textit{Heat Kernel Bounds on Metric Measure Spaces
and Some Applications},
        Potent. Anal. \textbf{44} (2016), 601--627.


\bibitem[K15]{Ketterer}
        \textsc{C. Ketterer}:
        \textit{Cones over metric measure spaces and the maximal diameter theorem}, J. Math. Pures Appl. \textbf{103} (2015), 1228--1275.




\bibitem[LV09]{LottVillani}
\textsc{J. Lott, C. Villani}:
\textit{Ricci curvature for metric-measure spaces via optimal transport}, 
Ann. of Math. \textbf{169} (2009), 903--991.

\bibitem[MS67]{MS}
		\textsc{H. P. McKean, I. M. Singer}:
		\textit{Curvature and the eigenvalues of the Laplacian}, 
		J. Differential Geom. \textbf{1} (1967), 43--69.

\bibitem[MN19]{MondinoNaber}
        \textsc{A. Mondino, A. Naber}:
        \textit{Structure theory of metric measure spaces with lower Ricci curvature bounds},J. Eur. Math. Soc. \textbf{21} (2019), 1809--1854.

\bibitem[P11]{Petrunin}
		\textsc{A. Petrunin}:
		\textit{Alexandrov meets Lott-Villani-Sturm,},
		Munster J. Math. \textbf{4} (2011), 53-64.


\bibitem[Raj12]{Rajala}
        \textsc{T. Rajala}:
        \textit{Local Poincar\'e inequalities from stable curvature conditions on metric spaces},
        Calc. Var. Partial Differential Equations \textbf{44}(3) (2012), 477--494.

\bibitem[Ros97]{Rosenberg}
        \textsc{S. Rosenberg}:
        \textit{The Laplacian on a Riemannian Manifold: An Introduction to Analysis on Manifolds},
        Cambridge University Press (1997).
        
\bibitem[S14]{Savare}
       \textsc{G. Savar\'e}:
       \textit{Self-improvement of the {B}akry-\'{E}mery condition and {W}asserstein contraction of the heat flow in $\RCD(K,  \infty)$ metric measure spaces}, Discrete Contin. Dyn. Syst. \textbf{34} (2014), 1641--1661.
        
\bibitem[SZ06]{SZ}
       \textsc{P. Souplet, Q. S. Zhang}: \textit{Sharp gradient estimate and Yau's Liouville theorem for the heat equation on noncompact manifolds}, Bull. Lond. Math. Soc. \textbf{38}(6) (2006), 1045--1053.

\bibitem[St95]{Sturm95}
	\textsc{K.-T. Sturm}:
	\textit{Analysis on local Dirichlet spaces. II. Upper Gaussian estimates for the fundamental
solutions of parabolic equations},
	 Osaka J. Math. \textbf{32}(2) (1995), 275--312.

\bibitem[St96]{Sturm96}
	\textsc{K.-T. Sturm}:
	\textit{Analysis on local Dirichlet spaces. III. The parabolic Harnack inequality},
	J. Math. Pures Appl. \textbf{75}(9) (1996), 273-297.

\bibitem[St06a]{Sturm06a}
	\textsc{K.-T. Sturm}:
	\textit{On the geometry of metric measure spaces, I},
	Acta Math. \textbf{196} (2006), 65--131.

\bibitem[St06b]{Sturm06b}
	\textsc{K.-T. Sturm}:
	\textit{On the geometry of metric measure spaces, II},
	Acta Math. \textbf{196} (2006), 133--177.

{
\bibitem[Tew19]{Tewo}
	\textsc{D. Tewodrose}:
	\textit{A survey on spectral embeddings and their applications in data analysis},
	S\'eminaire de th\'eorie spectrale et g\'eom\'etrie, Volume 35 (2017-2019), pp. 197-244.
}
	
	


\end{thebibliography}
\end{document}